\title{Splitting Line Patterns in Free Groups}
\author{Christopher H. Cashen} 
\address{Fakult\"at f\"ur Mathematik\\Universit\"at Wien\\\newline
1090 Wien\\\"{O}sterreich}
\email{christopher.cashen@univie.ac.at}
\urladdr{http://www.mat.univie.ac.at/~cashen}
\theoremstyle{plain}
\newtheorem{theorem}{Theorem}[section]
\newtheorem{lemma}{Lemma}[section]
\newtheorem{proposition}{Proposition}[section]
\newtheorem{corollary}{Corollary}[section]
\newtheorem*{vgtheorem}{Characterization of Virtual Geometricity}
\theoremstyle{remark}
\newtheorem{claim}{Claim}[theorem]
\newtheorem*{remark}{Remark}
\theoremstyle{definition}
\newtheorem{definition}{Definition}[section]
\newtheorem{example}{Example}[section]
\newcommand{\decompthm}{\hyperlink{decompthmtarget}{Relative JSJ-Decomposition Theorem}\xspace}
\newcommand{\vgthm}{\hyperlink{vgthmtarget}{Characterization of Virtual Geometricity}\xspace}
\newenvironment{claimproof}[1][Proof of Claim]{\vspace{1ex}\noindent{\it #1.}\hspace{0.5em}}
	{\hfill$\lozenge$\vspace{1ex}}
\def\makeautorefname#1#2{\expandafter\def\csname#1autorefname\endcsname{#2}}
\let\fullref\autoref
\let\c@lemma=\c@theorem 
\let\c@proposition=\c@theorem 
\let\c@corollary=\c@theorem 
\let\c@definition=\c@theorem 
\let\c@example=\c@theorem 
\DeclareMathOperator{\shadow}{Shadow}
\DeclareMathOperator{\Aut}{Aut} 
\DeclareMathOperator{\Wh}{\mathfrak{W}} 
\DeclareMathOperator{\Aug}{Aug} 
\DeclareMathOperator{\Ind}{Ind} 
\def\bdry{\partial}
\def\X{\mathcal{X}} 
\def\Y{\mathcal{Y}}
\def\hull{\mathcal{H}} 
\def\tree{\mathcal{T}} 
\def\lines{\textbf{L}} 
\def\line{\mathcal{L}} 
\def\l{\line}
\def\multimot{\underline{w}} 
\def\mot{w} 
\def\multiklass{[\multimot]}
\newcommand{\klass}{[\langle\mot\rangle]}
\def\indg{\Ind^G_\Gamma(\multimot)}
\def\indgklass{\Ind^G_\Gamma([\multimot])}
\def\we{\mathfrak{E}}
\def\wv{\mathfrak{V}}
\def\wc{\mathfrak{C}}
\newcommand{\gr}{\Theta}
\def\basis{\underline{b}}
\def\basic{b}
\def\F{F}
\def\maxoverlap{M}
\DeclareMathOperator{\nbhd}{N}
\def\augw{\Aug_\Gamma(\multimot)}
\def\decomp{\mathcal{D}} 
\def\decompw{\decomp_{\multimot}}
\def\qmap{\Delta}
\def\augmap{\alpha_\Gamma}
\def\e{\epsilon}
\newcommand{\fingen}[1]{\langle #1\rangle}
\newcommand{\bp}{{\bf 1}}
\newcommand{\wrel}{\mathbin{\circledcirc}}
\newsavebox\myboxA
\newsavebox\myboxB
\newlength\mylenA
\newcommand*\xoverline[2][0.75]{%
    \sbox{\myboxA}{$\m@th#2$}%
    \setbox\myboxB\null
    \ht\myboxB=\ht\myboxA%
    \dp\myboxB=\dp\myboxA%
    \wd\myboxB=#1\wd\myboxA
    \sbox\myboxB{$\m@th\overline{\copy\myboxB}$}
    \setlength\mylenA{\the\wd\myboxA}
    \addtolength\mylenA{-\the\wd\myboxB}%
    \ifdim\wd\myboxB<\wd\myboxA%
       \rlap{\hskip 0.5\mylenA\usebox\myboxB}{\usebox\myboxA}%
    \else
        \hskip -0.5\mylenA\rlap{\usebox\myboxA}{\hskip 0.5\mylenA\usebox\myboxB}%
    \fi}
\newcommand{\inv}[1]{\xoverline{#1}}
\newcommand{\closure}[1]{\overline{#1}} 
\newlength{\figstandardheight}
\def\from{\colon\thinspace}
\def\onto{\twoheadrightarrow}
\def\into{\hookrightarrow}
\def\rjsj{rJSJ\xspace}
\begin{document}
\begin{abstract}
We construct a boundary of a finite rank free group relative to a
finite list of conjugacy classes of maximal cyclic subgroups.
From the cut points and uncrossed cut pairs of this boundary we
construct a simplicial tree on which the group acts cocompactly. We
show that the quotient graph of groups is the JSJ decomposition of the
group relative to the given collection of conjugacy classes.

This provides a characterization of virtually geometric
multiwords: they are the multiwords that are built from geometric pieces.
In particular, a multiword is virtually geometric if and only if the
relative boundary is planar. 
\end{abstract}

\maketitle

\section{Introduction}

Let $\F=\F_n$ be a free group of finite rank $n>1$.
Let $\multiklass=\{[\left<\mot_1\right>],\dots,[\left<\mot_k\right>]\}$ be
a \emph{multiclass}, a non-empty collection of distinct conjugacy classes of maximal cyclic subgroups.

The goal of this paper is to find splittings of $\F$ \emph{relative to}
  $\multiklass$ (rel $\multiklass$), that is, splittings of $\F$
as a free product or as an amalgam over cyclic subgroups in such a
way that each $[\langle\mot_i\rangle]\in\multiklass$ is elliptic.
We do this by analyzing the topology of a certain relative boundary
$\decomp$ of $\F$, defined as follows:

The free group $\F$ has a well-defined Gromov boundary $\bdry\F$ that
is homeomorphic to a Cantor set.
Left multiplication of $\F$ on itself extends continuously to an
action of $\F$ on $\bdry\F$ by homeomorphisms.
For each non-trivial element $f\in\F$, the $f$--action on $\bdry\F$
has an attracting fixed point $f^\infty$ and a repelling fixed point,
which is the attracting fixed point of $\inv{f}=f^{-1}$, and is
denoted $\inv{f}^\infty$.

\begin{definition}\label{def:boundarypattern}
  Define the \emph{boundary pattern associated to}
  $\multiklass$ to be:
 \[\bdry\multiklass=\{\{w^\infty,\inv{w}^\infty\}\mid
  \left<w\right>\text{ is a maximal cyclic subgroup with }[\left<w\right>]\in\multiklass\}\]
\end{definition}

\begin{definition}\label{def:decompositionspace}
 The \emph{decomposition space
$\decomp=\decomp_{\multiklass}$ of $\bdry\F$ \emph{associated to}
$\multiklass$} is the quotient of $\bdry\F$ obtained by
identifying the two points $\xi_0$ and $\xi_1$ for each
$\{\xi_0,\xi_1\}\in\bdry\multiklass$.
Let $\qmap\from \bdry\F \to \decomp$ be the quotient map.
\end{definition}

The action of $\F$ on $\bdry\F$ preserves the boundary pattern
$\bdry\multiklass$, so it induces an action of $\F$
on $\decomp$ by homeomorphisms.

It is not hard to see that if $\F$ has a free splitting rel
$\multiklass$ then $\decomp$ is not connected.
Similarly, if $\F$ splits over $\langle f\rangle$ rel $\multiklass$,
where $\langle f\rangle$ is a maximal cyclic
subgroup, then
$\decomp\setminus\qmap(\bdry\langle f\rangle)$ is not connected.
If $[\langle f\rangle]\in\multiklass$ then 
$\qmap(\bdry\langle f\rangle)$ is a single point in $\decomp$, so 
there is a point whose removal disconnects $\decomp$.
If $f$ is non-trivial and $[\langle f\rangle]\notin\multiklass$ then 
$\qmap(f^\infty)$ and $\qmap(f^{-\infty})$ are distinct points in $\decomp$, so 
there is a pair of points whose removal disconnects $\decomp$.

We will focus on the case that $\F$ does not split freely rel
$\multiklass$, in which case we will see that $\decomp$ is connected.
The previous paragraph then suggests that we analyze cut points and
cut pairs of $\decomp$.

We show that the cut points and cut pairs of $\decomp$ encode a
simplicial tree on which $\F$ acts cocompactly. 
The quotient graph of groups gives us a canonical decomposition of $\F$
rel $\multiklass$.
The main result, the \decompthm(\fullref{theorem:JSJ}), is that this canonical graph of groups
decomposition of $\F$ obtained from $\decomp$ is the \emph{JSJ decomposition of $\F$ relative
  to $\multiklass$} (the \rjsj).
That is, it is the decomposition that encodes all cyclic splittings of
$\F$ relative to $\multiklass$, and satisfies certain universality and
maximality properties, in the sense of
Guirardel and Levitt \cite{GuiLev09} (see also \cite{KhaMia05}).

In \fullref{sec:preliminaries} we introduce some preliminaries,
including various versions of Whitehead graphs.

In \fullref{sec:topologyofdecompositionspace} we use generalized
Whitehead graphs to investigate topological
features of $\decomp$.
We regard this section as semi-preliminary, as it is a development of
ideas that were present in \cite{CasMac11}.
However, some of the proofs are technical, so they are
presented here in detail in the interests of rigor and of making
this paper self-contained.

In \fullref{sec:splitting} we construct a simplicial tree from the cut
points and cut pairs of $\decomp$ and show that the quotient graph is
the \rjsj.

We apply these results in \fullref{sec:vg} to characterize virtually
geometric multiclasses, which will be introduced in \fullref{sec:vgintro}.

A benefit of our approach to relative splittings via the decomposition
space and generalized Whitehead graphs is that the arguments end up
being combinatorial.
It follows that not only is the \rjsj algorithmically constructible,
which was already
known by work of Kharlampovich and Miasnikov \cite{KhaMia05}, but
there is a combinatorial algorithm that is actually implementable.
In subsequent work with Manning, we have extended the methods of this
paper to get an implementable algorithm to construct the \rjsj, and we
have written a computer program \cite{CasMan14vg} (see also \cite{CasMan15})
that will compute the \rjsj and decide whether or not a given
multiclass is virtually geometric.

\subsubsection{First Examples}
We give two examples to give an idea of what decomposition spaces and
relative JSJ decompositions can look like. 
These examples are of a very special type: the free group $\F$
can be viewed as the fundamental group of a compact, connected, orientable surface
with boundary, and the multiclass includes the conjugacy class of each
of the boundary curves of the surface. 
In this case there are no relative free splittings, and we can `see'
the relative cyclic splittings---they correspond to essential, non-peripheral simple
closed curves\footnote{A curve is \emph{essential} if it not homotopic
  to a curve that bounds a disc and \emph{non-peripheral} if it not homotopic to a
  curve that bounds a once-punctured disc or an annulus.} in the surface that can be homotoped to be disjoint from
a multicurve representing the multiclass.

Much of the work in
\fullref{sec:topologyofdecompositionspace} is about how to `see' relative cyclic
splittings when no ambient surface topology is available.

\begin{example}\label{ex:first}
Consider $\F=\fingen{a,b}$
and $\multiklass=\{[\langle ab\inv{a}\inv{b}\rangle]\}$.
The decomposition space is homeomorphic to the circle. 
To see this, view $\F$ as the fundamental group of a complete, finite volume hyperbolic
punctured torus $\Sigma$, and represent
$ab\inv{a}\inv{b}$ as a simple closed curve running around the puncture. 
The universal cover of $\Sigma$ is the hyperbolic plane
$\mathbb{H}^2$, see \fullref{fig:firstexample}. 
\begin{figure}[h]
  \centering
  \includegraphics[width=1.5in]{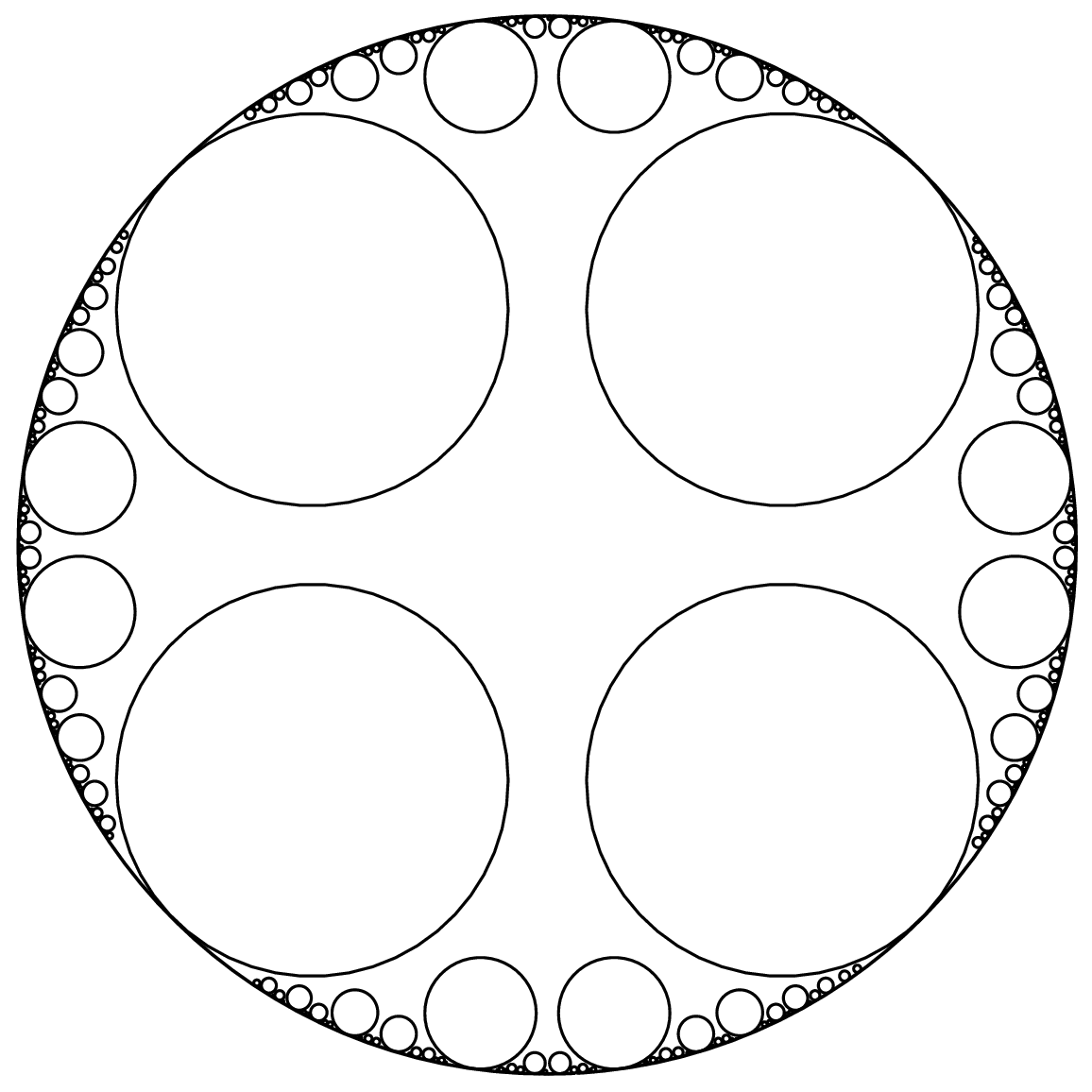}
  \caption{Horocycles in $\mathbb{H}^2$ demonstrating $\decomp=\bdry\mathbb{H}^2=S^1$.}
  \label{fig:firstexample}
\end{figure}
The group $\F$ acts by deck transformations on $\mathbb{H}^2$, and the
action extends to a continuous surjection
$\bdry\F\to\bdry\mathbb{H}^2$ that is 
 2 to 1 on the parabolic points
and 1 to 1 off them.
The curve representing $ab\inv{a}\inv{b}$ is freely
homotopic to the quotient of a horocycle.
The element $ab\inv{a}\inv{b}$ acts parabolically,
fixing a point $\xi\in\bdry\mathbb{H}^2$, and the preimage of $\xi$ in
$\bdry\F$ is exactly the two points $(ab\inv{a}\inv{b})^\infty$ and
$(ab\inv{a}\inv{b})^{-\infty}$ of $\bdry\F$.
Since there is only one orbit of parabolic points, we conclude $\decomp=\bdry\mathbb{H}^2=\mathbb{S}^1$.

Alternatively, we could take $\Sigma'$ to be a hyperbolic one-holed torus with
geodesic boundary component representing $ab\inv{a}\inv{b}$.
The universal cover sits inside of $\mathbb{H}^2$ as a thickened tree,
and $\bdry\F$ embeds  into $\bdry\mathbb{H}^2$.
The points $(ab\inv{a}\inv{b})^\infty$ and
$(ab\inv{a}\inv{b})^{-\infty}$ are sent to the endpoints of a interval
in $\bdry\mathbb{H}^2$ not containing any other points of $\bdry\F$, so the
quotient map $\qmap\from\bdry\F\to\decomp=\mathbb{S}^1$ can be viewed
as a circular analogue of the map collapsing missing intervals of the
ternary Cantor set to get the unit interval.

In this example the \rjsj is trivial --- a single vertex stabilized by
$\F$. 
This is due to the universality requirement, see
\fullref{theorem:JSJ}.
The reason is that cyclic splittings of $\F$ rel $\{[\langle
ab\inv{a}\inv{b}\rangle]\}$ correspond to essential, non-peripheral simple closed curves in
$\Sigma'$, but for every such curve there is another intersecting it,
so none of these splittings are universal.
\end{example}

\begin{example}\label{ex:second}
Consider the marked
surface $\Sigma$ in \fullref{fig:surface}. 
The labeled curves are generators of
the fundamental group $\F_5=\left<a,b,c,d,e\right>$.

\begin{figure}[h]
\begin{minipage}[b]{0.48\textwidth}
  \centering
\labellist
\small
\pinlabel $a$ at 66 15
\pinlabel $c$ at 53 15
\pinlabel $b$ at 75 56
\pinlabel $d$ at 44 56
\pinlabel $e$ at 60 63
\endlabellist
  \includegraphics[width=.95\textwidth]{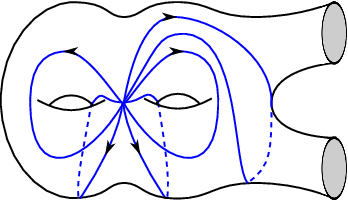}
  \caption{A marked surface}
  \label{fig:surface}
\vspace{1em}
\end{minipage}
\hfill
\begin{minipage}[b]{0.48\textwidth}
\labellist
\small
\pinlabel $a$ at 97 15
\pinlabel $c$ at 9 58
\pinlabel $d$ at 42 82
\pinlabel $e$ at 148 15
\pinlabel $\inv{a}c$ at 78 58
\pinlabel $dc\inv{d}\inv{c}$ at 77 80
\pinlabel $dc\inv{d}\inv{c}ab\inv{a}\inv{b}\inv{e}$ at 131 80
\endlabellist
  \centering
  \includegraphics[width=.95\textwidth]{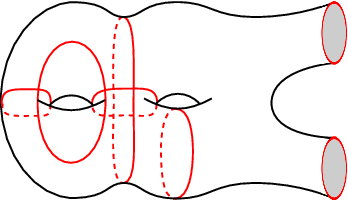}
  \caption{A multi-curve representing $\multiklass$}
  \label{fig:surfacedecomp}
\end{minipage}
\end{figure}

Consider the multiclass: \[\multiklass=\{[\langle a\rangle], [\langle
c\rangle], [\langle d\rangle], [\langle e\rangle], [\langle\inv{a}c\rangle],
[\langle dc\inv{d}\inv{c}\rangle], [\langle dc\inv{d}\inv{c}ab\inv{a}\inv{b}\inv{e}\rangle]\}\]
\fullref{fig:surfacedecomp} shows a \emph{multicurve} representing
$\multiklass$.

In this example, all boundary curves of the surface belong to the
multicurve, so $\decomp$ is connected as in \fullref{ex:first}.

\begin{figure}[h]
\begin{minipage}[b]{0.48\textwidth}
\labellist
\small
\pinlabel $\Sigma_1$ at 15 70
\pinlabel $\Sigma_2$ at 160 70
\pinlabel $A_3$ at 105 -6
\pinlabel $A_4$ at 105 101
\endlabellist
 \centering
  \includegraphics[width=.95\textwidth]{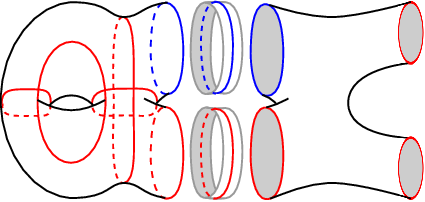}
  \caption{Decomposition of a surface.}
  \label{fig:surfacedecomp2}
\end{minipage}
\hfill
\begin{minipage}[b]{0.48\textwidth}
  \centering
\labellist
\pinlabel $\left<a\right>$ at 33 -1
\pinlabel $\left<dc\inv{d}\inv{c}a\right>$ at 33 40
\pinlabel $\left<dc\inv{d}\inv{c}a,\,ba\inv{b},\,e\right>$ at 60 20
\pinlabel $\left<a,c,d\right>$ at 0 20
\pinlabel $dc\inv{d}\inv{c}a$ at 0 34
\pinlabel $a$ at 1 10
\pinlabel $a$ at 22 -1
\pinlabel $a$ at 44 -1
\pinlabel $ba\inv{b}$ at 68 10
\pinlabel $dc\inv{d}\inv{c}a$ at 68 34
\pinlabel $dc\inv{d}\inv{c}a$ at 11 41
\pinlabel $dc\inv{d}\inv{c}a$ at 55 41
\endlabellist
\includegraphics[width=.75\textwidth]{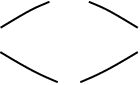}
\caption{Corresponding graph of groups}
\label{fig:surfacegog}
\end{minipage}
\end{figure}

Consider the decomposition of $\Sigma$ into subsurfaces given in
\fullref{fig:surfacedecomp2}.
The corresponding graph of groups in \fullref{fig:surfacegog} is the
rJSJ for this example.
In \fullref{fig:surfacegog} the vertices are labelled with their
stabilizer subgroups. The edge stabilizers are all infinite cyclic,
and the label at each end of each edge indicates the image of a fixed
generator of the edge stabilizer in the vertex group.

This is the rJSJ because every essential, non-peripheral simple closed curve in $\Sigma$ that
does not intersect a curve of the multicurve is
either homotopic to an essential, non-peripheral simple closed curve in the subsurface
$\Sigma_2$ or to the core curve of annulus $A_3$
or annulus $A_4$. 

Let us examine the four subsurfaces:
\begin{enumerate}
\item Subsurface $\Sigma_1$ is `filled' by the
multicurve, in the sense that every essential, non-peripheral simple closed curve in $\Sigma_1$ intersects
one of the curves of the multicurve.
This means that $\Sigma_1$ does not contribute any relative
splittings of $\F$.
The fundamental group of $\Sigma_1$ is an example of what will be
called a \emph{rigid}
vertex group.
\item The subsurface $\Sigma_2$ is a sphere with four holes. 
It is `empty', in the sense that there are no curves
of the multicurve in its interior.
Thus, any essential, non-peripheral simple closed curve in $\Sigma_2$ yields a cyclic
splitting of $\F$ rel $\multiklass$.
However, as in the previous example,
these splittings are not universal.
The fundamental group of $\Sigma_2$ is an example of what will be
called a \emph{QH-surface}
vertex group\footnote{These are the torsion-free examples of the
  `quadratically hanging' vertex groups of Rips and Sela
  \cite{RipSel94}.}.
\item The annulus $A_3$ has a core curve representing $[\langle
  a\rangle]\in\multiklass$, so $\qmap(a^\infty)=\qmap(\inv{a}^{\infty})$ is
  a cut point of $\decomp$.
\item The core curve of annulus $A_4$ is not homotopic to a curve of
  the multicurve.
The points $\qmap((dc\inv{d}\inv{c}a)^\infty)$ and
$\qmap((dc\inv{d}\inv{c}a)^{-\infty})$ are distinct points in
$\decomp$.
They form a cut pair in $\decomp$.
\end{enumerate}

Observe the following features of this decomposition, and compare \fullref{theorem:JSJ}.
The decomposition is bipartite: there is a collection of
annuli and a collection of more complicated subsurfaces, and each
subsurface is adjacent only to members of the opposite collection. 
Among the more complicated subsurfaces there are those that are filled
(the rigid vertices)
and those that are empty (the QH-surface vertices). 
Every splitting of $\F$ rel $\multiklass$ comes from
either the core curve of one of the annuli or an essential, non-peripheral simple closed curve in
one of the empty subsurface pieces.
\end{example}

\subsection{The Decomposition Space and the \rjsj}
We are interested in cut sets of $\decomp$.
If $\decomp$ is connected and $\decomp\setminus\{x\}$ is not connected, then $x$ is called a
\emph{cut point}.
Similarly, if $\decomp$ is connected and $\{x_0,x_1\}$ is a pair of
points, neither of which is a cut point, such that
$\decomp\setminus\{x_0,x_1\}$ is not connected, then $\{x_0,x_1\}$ is
called a \emph{cut pair}.
We call $(\F,\multiklass)$ \emph{rigid} if $\decomp$ is connected with no
cut points and no cut pairs\footnote{The fact that $(\F,\multiklass)$ is
quasi-isometrically rigid if and only if $\decomp$ is connected with no
cut points and no cut pairs is the main result of \cite{CasMac11}. 
Since we will not be concerned with quasi-isometric rigidity in this
paper, we take these condition on $\decomp$ to be the definition of
rigidity.}.
We call $(\F,\multiklass)$ a \emph{QH-surface} if there exists a compact
surface
with boundary, $\Sigma$, such that $\F=\pi_1(\Sigma)$
and $\multiklass=[\bdry\Sigma]$.
In this case we write $(\F,\multiklass)\sim (\Sigma,[\bdry\Sigma])$.

\subsubsection{Induced Multiclasses}
\begin{definition}\label{def:inducedmultiword}
  Let $G$ be a non-cyclic vertex group of a graph of groups
  decomposition $\Gamma$ of $\F$ rel $\multiklass$ with cyclic edge
  stabilizers.
Define the \emph{induced multiclass} in $G$,
denoted $\indgklass$, to be the set of distinct $G$--conjugacy classes
of $G$--maximal cyclic subgroups that either contain the image of an
edge injection into $G$ or are contained in an
$\F$--maximal cyclic subgroup whose conjugacy class is in $\multiklass$.
\end{definition}

In \fullref{ex:second}, the induced multiclass in $\pi_1(\Sigma_1)$
is:
\[\{[\fingen{a}],[\fingen{c}],[\fingen{d}],[\fingen{\inv{a}c}],[\fingen{dc\inv{d}\inv{c}}],[\fingen{dc\inv{d}\inv{c}a}]\}\]
These classes come from the four curves of the multicurve in the
interior of $\Sigma_1$, plus the two boundary curves, one of which was
a member of the multicurve, and one of which was not.

Similarly, $\Ind_\Gamma^{\pi_1(\Sigma_2)}=\{[\fingen{ba\inv{b}}], [\fingen{e}],
[\fingen{dc\inv{d}\inv{c}ab\inv{a}\inv{b}\inv{e}}],[\fingen{dc\inv{d}\inv{c}a}]\}$.
All of these classes correspond to boundary curves of $\Sigma_2$, three of which were
members of the multicurve, and one of which was not.

The vertex corresponding to $G$ is said to be \emph{rigid} or 
 \emph{QH} if $(G,\indgklass)$ is rigid or is a
QH--surface, respectively.
It will turn out that all
 non-cyclic vertices of the \rjsj are either rigid or QH.

\subsubsection{Outline of the Construction of the \rjsj}
Suppose that $\F$ does not split freely relative to $\multiklass$,
which is equivalent to supposing
that $\decomp$ is connected, by \fullref{thm:freesplitting}.
Suppose that $\{x_0,x_1\}$ and $\{y_0,y_1\}$ are cut pairs such that $y_0$ and $y_1$
lie in different complementary components of $\{x_0,x_1\}$.
In this case, we say
$\{y_0,y_1\}$ \emph{crosses} $\{x_0,x_1\}$.
If there does not exist a cut pair crossing
$\{x_0,x_1\}$ then we say $\{x_0,x_1\}$ is \emph{uncrossed}.

\fullref{proposition:circle} generalizes a construction of Bowditch to
show that if $\F$ does not split freely rel $\multiklass$, and if
$(\F,\multiklass)$ is neither rigid nor a QH-surface, then $\decomp$ contains
a cut point or an uncrossed cut pair. 

\fullref{proposition:periodic} says that an uncrossed cut pair is
rational\footnote{$\bdry\F$ can be thought of as the set of infinite, freely
  reduced words in the generators of $\F$ and their inverses. The
  points whose expressions as such are eventually periodic are
  commonly called `rational', in analogy to decimal representations of
rational numbers. Being a rational point of $\bdry\F$ is equivalent to
being fixed by an infinite cyclic subgroup. We extend the terminology
to call a pair of points \emph{rational} if they are fixed by an
infinite cyclic subgroup.}, that is, it is
stabilized by an infinite cyclic subgroup of $\F$. 
Moreover, there are only finitely many conjugacy classes of
stabilizers of uncrossed cut pairs.

We show in \fullref{proposition:multipleotal} that the collection of cut points and uncrossed cut pairs in
$\decomp$ has the structure of a simplicial tree.
Otal proved this for the cut points. We generalize his proof to work
simultaneously with the cut points and uncrossed cut pairs. 
Since $\F$ acts by homeomorphism on $\decomp$, and since there are
finitely many conjugacy classes of stabilizers of cut points and
uncrossed cut pairs, we get a cocompact $\F$--action on this cut
point/uncrossed cut pair tree.
\fullref{theorem:JSJ} says the \rjsj is the quotient graph of groups of this action.

A consequence of \fullref{proposition:multipleotal} is that every
uncrossed cut pair of $\decomp$ corresponds to a cyclic splitting of
$\F$ rel $\multiklass$.

\bigskip

The main work is proving \fullref{proposition:circle},
\fullref{proposition:periodic}, and \fullref{proposition:multipleotal}. 
Verifying that the resulting graph of groups satisfies the desired
properties of the \rjsj is routine.

\subsubsection{Otal, Bowditch, and Cut Points}
Otal \cite{Ota92} showed that the cut points of $\decomp$ have a simplicial tree
structure.
Bowditch \cite{Bow01} proved that the cut points of the boundary of
a relatively hyperbolic group have a simplicial tree structure.

It can be shown, see Manning \cite{Man15}, that $\decomp$ is equivariantly homeomorphic to the
Bowditch boundary of $\F$ relative to $\multiklass$, so Otal's result
is an early special case of Bowditch's result. 
For our purposes these results are not sufficient: 
The cut point tree does not see all the universal relative cyclic splittings, because it
misses the ones coming from uncrossed cut pairs.
Our methods treat cut points and uncrossed cut pairs in a unified way.

\subsection{Virtual Geometricity}\label{sec:vgintro}
In \fullref{sec:vg} we apply the \decompthm to characterize
virtual geometricity.

$\multiklass\subset \F$ is \emph{geometric} if it can be represented by
an embedded multicurve in the boundary of a handlebody with fundamental
group $\F$.

Otal's main result in \cite{Ota92}, suitably reinterpreted,
is that in the case that the \rjsj is trivial, $\multiklass$
is geometric if and only if the corresponding decomposition space is planar.
Furthermore, planarity of the decomposition space can be deduced from the
Whitehead graph of $\multiklass$.

$\multiklass$ is \emph{virtually
  geometric} if there is a finite index subgroup $G$ of $\F$ such that the
`lift' of $\multiklass$ to $G$ is geometric.
The \emph{lift of $\multiklass$ to $G$}, for $\multiklass=\{[\left<\mot_1\right>],\dots,[\left<\mot_k\right>]\}$, is the multiclass of $G$ that
contains every conjugacy class of maximal cyclic subgroup of $G$ that
is conjugate in $\F$ into one of the $\fingen{\mot_i}$, see \fullref{section:lift}.

We use the \rjsj to reduce virtual geometricity to geometricity of the
induced multiclasses in the vertex groups:

\begin{vgtheorem}[\fullref{theorem:planarequalsvg}]
\hypertarget{vgthmtarget}{
  For a multiclass in a free group, the following are equivalent:
\begin{enumerate}
\item The multiclass is virtually geometric.
\item The decomposition space is planar.
\item For every non-cyclic vertex group of the \rjsj, the induced multiclass is geometric.
\end{enumerate}
Thus, virtually geometric multiclasses are those that are built
from geometric pieces.}
\end{vgtheorem}

When there are no uncrossed cut pairs we use the \rjsj to
explicitly construct a finite index subgroup and handlebody that
demonstrate virtual geometricity.

When there are uncrossed cut pairs we first pinch them to cut points
and then apply the previous construction. 
The pinching is done in such a way as to preserve planarity of the
decomposition space, using a technical fact,  \fullref{corollary:arcconnectedcomponents}, that the closure of a
complementary component of a cut pair in $\decomp$ is arc-connected.

\subsection{Acknowledgements}

I thank Jason Manning, who noticed that the decomposition space of
\cite{CasMac11} was the same as the space considered by Otal, and
brought the question of virtual geometricity to my attention.
I also thank him for helpful comments on an earlier version of this paper.

I thank the anonymous referee for their diligence.

This work was partially supported by the Agence Nationale de la Recherche
  (ANR) grant ANR-2010-BLAN-116-01 GGAA, the European Research Council
  (ERC) grant of Goulnara ARZHANTSEVA, grant agreement \#259527, and
  the Austrian Science Fund (FWF):M1717-N25

\section{Preliminaries}\label{sec:preliminaries}

\subsection{Definitions and Notation}

The \emph{degree} of a homomorphism from the integers into a free
group is the index of its image in the maximal cyclic subgroup containing
the image.

A nontrivial element $g\in \F$ is \emph{indivisible} if is not a proper
power. 

Let $\basis$ be a basis of $\F$.
The Cayley graph of $\F$ with respect to $\basis$ is a tree $\tree$.
Assign each edge length one; $\F$
acts isometrically on $\tree$ by left multiplication.
We will use $\bp$ to denote the vertex corresponding to the identity
element of $\F$.

The tree $\tree$ has
a Gromov boundary at infinity $\bdry\tree$ that is identified with $\bdry\F$.
This boundary compactifies the tree: $\closure{\tree}=\tree\cup \bdry\tree$ is
a compact topological space whose topology on $\tree$ agrees with the
metric topology. 
For $x,y\in\closure{\tree}$, there exists a unique geodesic $[x,y]$
connecting them.
By a \emph{simplicial geodesic} we shall mean an isometric
embedding $\phi\from [a,b]\into \closure{\tree}$ taking integers to
vertices, with $a,b\in\mathbb{Z}\cup\{\pm\infty\}$.
We use the notation $\phi\from[a,b]\onto [x,y]$ to indicate a
simplicial geodesic with $\phi(a)=x$ and $\phi(b)=y$.
There is unique such simplicial geodesic if $a$ or $b$ is finite.

For a fixed vertex $u$, a basis for the topology of $\bdry\tree$ is
given by the sets
\[\shadow^u(v)=\{\xi\in\bdry\tree\mid v\in[u,\xi]\}\quad\text{for } v\in\tree\setminus\{u\}.\]
 The resulting topology does not depend on the choice of $u$.

\subsection{Cut Pairs}
Recall that a minimal cut set is a subset $Y\subset X$ such that
$X\setminus Y$ is not connected but $X\setminus Z$ is connected for
every proper subset $Z$ of $Y$.

\begin{lemma}\label{lemma:limitpoints}
  Let $Y$ be a closed minimal cut set of connected, locally connected
  space $X$. Every complementary
  component limits to every point of $Y$.
\end{lemma}
\begin{proof}
$X\setminus Y$ is locally connected, so components are proper,
non-empty clopens.  
If $y\in Y$ is not a limit point of a component $C$ then $C$ is still a
  proper, non-empty clopen in $X\setminus (Y\setminus y)$,
  contradicting minimality of $Y$.
\end{proof}

\begin{definition}
 If $Y$ is a cut set of $X$ then $Z$ \emph{crosses} $Y$ if there are
 points $z_0,\,z_1\in Z$ in different components of $X\setminus Y$.
\end{definition}

Recall that a cut pair is a minimal cut set of size two, and a cut
pair is said to be uncrossed if no cut pair crosses it.
The following lemma is easily verified:
\begin{lemma}\label{lemma:precisely2components}
Let $X$ be a connected, locally connected space in which cut points and cut pairs have
finitely many complementary components.
  \begin{enumerate}
  \item Crossing is a symmetric relation among cut pairs.
\item A cut pair cannot cross a cut point.
\item A cut pair with at least three complementary components is uncrossed.
  \end{enumerate}
\end{lemma}

\subsection{A Multiclass Lifted to a Finite Index Subgroup}\label{section:lift}
\begin{definition}\label{definition:lift}
   The \emph{lift} of $\multiklass$ to a finite index subgroup $G$ is the multiclass 
  $\multiklass_G$ consisting of the distinct 
  $G$--conjugacy classes of $G$--maximal cyclic subgroups that are
  contained in an $\F$--maximal cyclic subgroup whose conjugacy class
  is in $\multiklass$.
\end{definition}

The inclusion $\iota\from G\into \F$ extends to a
 homeomorphism $\bdry\iota\from\bdry G\to\bdry\F$ that takes
 $\bdry\multiklass_G$ to $\bdry\multiklass$, inducing
 a $G$--equivariant homeomorphism $\decomp_{\multiklass_G}\to\decomp_{\multiklass}$.

\begin{example}
Let $\F=\fingen{a,b}$.
Let $\multiklass=\{[\fingen{a}],[\fingen{b}]\}$.
Let $G$ be the index 2 subgroup $G=\fingen{a^2,b,ab\inv{a}}<\F$.
The lift of $\multiklass$ to $G$ is
$\multiklass_G=\{[\fingen{a^2}],[\fingen{b}],[\fingen{ab\inv{a}}]\}$.

\begin{figure}[h]
\hfill
  \begin{minipage}[h]{.5\linewidth}
    \labellist
\small
\pinlabel $\F=\pi_1(H)=\fingen{a,b}$ [br] at -50 10
\pinlabel $\multiklass=\{[\fingen{a}],[\fingen{b}]\}$ [br] at -50 40
\pinlabel $G=\pi_1(\widetilde H)=\fingen{a^2,b,ab\inv{a}}$ [br] at -50 150
\pinlabel $\multiklass_G=\{[\fingen{a^2}],[\fingen{b}],[\fingen{ab\inv{a}}]\}$ [br] at -50
180 
\endlabellist
    \includegraphics[scale=.5]{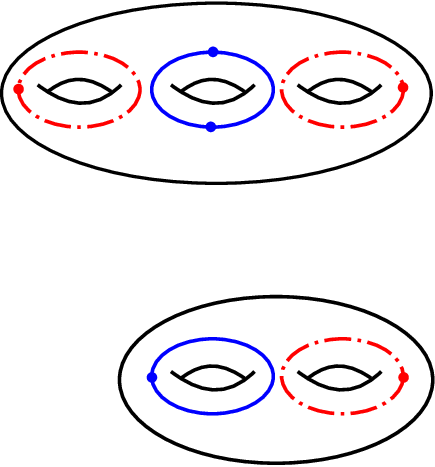}
  \end{minipage}
  \caption{Lifting to a finite index subgroup}
  \label{fig:lift}
\end{figure}
We can visualize the situation by taking $F$ to be the fundamental group of a
handlebody $H$ and picking curves representing $[\fingen{a}]$ and
$[\fingen{b}]$.
Then $G$  corresponds to a 2--fold cover $\widetilde H$ of $H$, and
$\multiklass_G$ is represented by the curves in $\widetilde H$
covering the chosen curves in $H$, as in 
\fullref{fig:lift}.
\end{example}

\subsection{Normalization} \label{sec:normalization}
See Serre \cite{Ser03} for an introduction to graphs of groups and
Bass-Serre theory.

Suppose $\Gamma$ is a graph of groups decomposition of $\F$ with cyclic
edge groups. 
It will be convenient to normalize $\Gamma$.
We describe a sequence of moves that change the graph of groups description without
changing the group itself or the conjugacy classes of non-cyclic vertex groups.
If $e$ is an edge of $\Gamma$ let $\eta(e,0)$ and $\eta(e,1)$ denote
the initial and terminal vertices of $e$, respectively.
Let $\phi_{e,i}\from G_e\into G_{\eta(e,i)}$ be the edge injection of
an edge group into a vertex group.

First, if there is an edge incident to two non-cyclic vertex groups,
subdivide it by adding a vertex with stabilizer equal to
the stabilizer of the edge group.

Second, for each edge $e$ let $G_e=\left<z_e\right>$ and let $\left<z_{e,i}\right>$ be the maximal
cyclic subgroup of $G_{\eta(e,i)}$ containing $\phi_{e,i}(G_e)$.
Since no nontrivial element is conjugate to a power of itself in the free
group, it is possible to choose the generators $z_e$ and $z_{e,i}$ so that
for all $e$ and $i$ the map $\phi_{e,i}$ takes $z_e$ to a positive
power of $z_{e,i}$ and so that if
$\inv{g}\left<z_{e,i}\right>g=\left<z_{e',i'}\right>$ for some $g\in G_{\eta(e,i)}=G_{\eta(e',i')}$ then $\inv{g}z_{e,i}g=z_{e',i'}$.

Third, if an edge group maps into a non-maximal cyclic subgroup of a
non-cyclic vertex group, we may un-collapse an edge as in \fullref{fig:uncollapse}.

\begin{figure}[h!]
\labellist
\small
\pinlabel $G$ [r] at 0 6
\pinlabel $G$ [r] at 135 6
\pinlabel $\left<z\right>$ [c] at 205 6
\tiny
\pinlabel $g^p$ [bl] at 0 6
\pinlabel $g$ [bl] at 135 6
\pinlabel $z^p$ [bl] at 220 6
\pinlabel $z$ [br] at 190 6
\endlabellist
  \centering
\includegraphics[height=.05in]{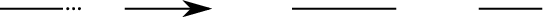}
\caption{Un-collapse an edge.}\label{fig:uncollapse}
\end{figure}

Fourth, consider two edges $e$ and $e'$ incident to a non-cyclic
vertex $v=\eta(e,i)=\eta(e',i')$.
Suppose $\phi_{e,i}(G_{e})$ and $\phi_{e',i'}(G_{e'})$ are distinct
and conjugate in $G_v$. Choose $g\in G_v$ such that
$\inv{g}\phi_{e,i}(G_{e})g=\phi_{e',i'}(G_{e'})$.
Replace the edge map $\phi_{e,i}$ with $\inv{g}\phi_{e,i}g$.

Fifth, fold all the edges together that map into a common maximal
cyclic subgroup in a given non-cyclic vertex, as in
\fullref{fig:fold}.

\begin{figure}[h!]
  \centering
\labellist
\small
\pinlabel $\left<y\right>$ [c] at 53 5
\pinlabel $G$ [c] at 130 5
\pinlabel $\left<z\right>$ [l] at 190 5
\pinlabel $G$ [r] at 308 5
\pinlabel $\left<z\right>$ [c] at 369 5
\tiny
\pinlabel $y^p$ [br] at 51 5
\pinlabel $y$ [bl] at 60 5
\pinlabel $g$ [br] at 123 5
\pinlabel $g$ [bl] at 138 5
\pinlabel $z^q$ [br] at 202 5
\pinlabel $g$ [bl] at 301 5
\pinlabel $z^q$ [br] at 365 5
\pinlabel $z^{pq}$ [bl] at 378 5
\endlabellist
\includegraphics[height=.05in]{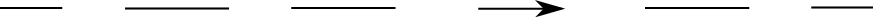}
\caption{Fold two edges.}\label{fig:fold}
\end{figure}

Folding is always possible when two edges map into a common
maximal cyclic subgroup of a non-cyclic vertex because we are in the
free group.
Consider the possible obstructions:
\begin{itemize}
\item If $\fingen{y}=\fingen{z}$ then the two edges of \fullref{fig:fold} form a loop
  corresponding to a stable letter $t$ conjugating $y$ to $y^{\pm q}$. This
  would mean $\fingen{t,y}$ is a Baumslag-Solitar subgroup, but free groups
  do not contain such subgroups.
\item If $\fingen{y}\neq\fingen{z}$ we could imagine the situation
depicted in \fullref{fig:nonfold} with $r>1$ and $q>1$.
In this case, $\fingen{y,z}$ is a virtually free-by-cyclic subgroup, but free groups
  do not contain such subgroups.
\end{itemize}

\begin{figure}[h!]
  \centering
\labellist
\small
\pinlabel $\left<y\right>$ [r] at 4 2
\pinlabel $G$ [c] at 66 2
\pinlabel $\left<z\right>$ [l] at 125 2
\tiny
\pinlabel $y^r$ [bl] at -5 2
\pinlabel $g$ [br] at 60 2
\pinlabel $g$ [bl] at 75 2
\pinlabel $z^q$ [br] at 140 2
\endlabellist
\includegraphics[height=.015in]{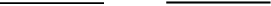}
\caption{Cannot occur in a free group.}\label{fig:nonfold}
\end{figure}

Similarly, since $\F$ is free, every edge group maps onto a maximal
cyclic subgroup in one of its two vertex groups.
Otherwise we would find a Baumslag-Solitar or virtually free-by-cyclic
subgroup.
Therefore, for any edge that is incident to two cyclic vertices, one
of the inclusions of the edge group into the vertex groups is an
isomorphism, and we can collapse the edge, as in
\fullref{fig:collapse}.

\begin{figure}[h!]
  \centering
\labellist
\small
\pinlabel $\left<y\right>$ [c] at 52 4
\pinlabel $\left<z\right>$ [l] at 115 4
\pinlabel $\left<z\right>$ [l] at 250 4
\tiny
\pinlabel $y^p$ [br] at 47 4
\pinlabel $y$ [bl] at 62 4
\pinlabel $z^q$ [br] at 123 4
\pinlabel $z^{pq}$ [br] at 260 4
\endlabellist
\includegraphics[height=.05in]{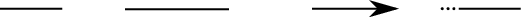}
\caption{Collapse an edge.}\label{fig:collapse}
\end{figure}

We are left with a new graph of groups decomposition of $\F$ that is bipartite: vertex groups are either maximal cyclic subgroups or are non-cyclic. 
Cyclic vertex groups are adjacent only to non-cyclic vertex groups,
and vice versa. 
For each edge, the edge map to the incident non-cyclic
vertex group maps the edge group onto a maximal cyclic subgroup of the
vertex group.
Furthermore, for each non-cyclic vertex group $G$ and each maximal
cyclic subgroup $C$ of $G$ there is at most one incident edge whose
edge group maps into a conjugate of $C$.

\begin{remark}
Another normalization that if often applied to a graph of groups is to 
make them \emph{reduced}. 
This means that if there is a non-loop edge $e$ incident to a vertex 
$\eta(e,i)$ such that the edge inclusion $\phi_{e,i}$ is an 
isomorphism, then the edge $e$ should be collapsed.
We do not assume that $\Gamma$ is reduced, because in some cases doing
so would ruin the
`bipartite' condition.   
\end{remark}

\subsection{Whitehead Graphs}
Our tool for understanding the topology of the decomposition
space associated to a multiclass is the generalized Whitehead graph of
the multiclass.
This machinery was developed in \cite{CasMac11}.

\subsubsection{Classical Whitehead Graph}
Let $\klass$ be a conjugacy class of maximal cyclic
subgroups of $\F$.
The (classical) Whitehead graph $\Wh_{\basis}(\bp)\{\klass\}$ of $\klass$ with respect to a basis
$\basis$ of $\F$ is a graph with $2n$ vertices labeled with the
elements of $\basis$ and their inverses. 
Let $\mot$ be a freely and cyclically reduced word in $\basis^\pm$ that generates a
representative of $\klass$.
One edge of $\Wh_{\basis}(\bp)\{\klass\}$ joins vertex $x$ to vertex $y$ for each occurrence of
$\inv{x}y$ in $\mot$, thought of
as a cyclic word. 
This definition extends to a multiclass by adding edges for each
class of the multiclass.
We will see a geometric interpretation and examples in \fullref{sec:generalizedwhiteheadgraph}.

Let $|\klass|_{\basis}$ be the minimal $\basis$--length of a generator
of a representative of $\klass$.
The \emph{complexity} of the Whitehead graph is the number of edges,
which is equal to $\sum_{\klass\in\multiklass}|\klass|_{\basis}$. 
A Whitehead graph $\Wh_{\basis}(\bp)\{\multiklass\}$ is \emph{minimal}
if its complexity is minimal among the complexities of
$\Wh_{\phi^{-1}(\basis)}(\bp)\{\multiklass\}$ for $\phi\in\Aut(\F)$.

Whitehead's Algorithm \cite{Whi36} picks a basis $\basis$ for which
$\Wh_{\basis}(\bp)\{\multiklass\}$ is minimal.
The proof shows that there is a finite set of `Whitehead
automorphisms' so that if $\Wh_{\basis}(\bp)\{\multimot\}$ is not
minimal then there exists a Whitehead automorphism $\phi$ that
strictly reduces the complexity.
The algorithm checks if any Whitehead automorphism reduces the
complexity, and repeats this process until no reducing Whitehead
automorphism exists.

A important observation in the proof is that if a Whitehead graph
$\Wh_{\basis}(\bp)\{\multiklass\}$ is connected and has a cut vertex, then it is not
minimal.

An easy extension of Whitehead's methods yields the following:

\begin{proposition} \label{proposition:Whitehead}
The following are equivalent:
  \begin{enumerate}
  \item Some Whitehead graph for $\multiklass$ is not connected.
\item Every minimal Whitehead graph for $\multiklass$ is not connected.
\item $\F$ splits freely rel $\multiklass$.
  \end{enumerate}
\end{proposition}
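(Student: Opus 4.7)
The plan is to prove the equivalences in the cycle $(3) \Rightarrow (1) \Rightarrow (2) \Rightarrow (3)$; the implication $(2) \Rightarrow (1)$ is immediate.

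For $(3) \Rightarrow (1)$, I start from a free splitting $F = F_1 * \cdots * F_k$ with $k \geq 2$ compatible with $\multimot$. Taking an adapted basis $\mathcal{B} = \bigsqcup_i \mathcal{B}_i$ with $\mathcal{B}_i$ a free basis of $F_i$, after conjugating each $w \in \multimot$ into its factor and cyclically reducing, every letter of $w$ lies in $\mathcal{B}_i \cup \mathcal{B}_i^{-1}$ for a single $i$. Consequently every edge of $\Wh_\mathcal{B}(\multimot)$ contributed by $w$ lies in the subgraph spanned by $\mathcal{B}_i \cup \mathcal{B}_i^{-1}$, making $\Wh_\mathcal{B}(\multimot)$ disconnected.

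For $(2) \Rightarrow (3)$, I take a minimal basis $\mathcal{B}^*$ with $\Wh_{\mathcal{B}^*}(\multimot)$ disconnected. The edges contributed by each $w \in \multimot$ form a cyclic chain through its consecutive letters, so all letters of $w$ (and their inverses) lie in a single component of $\Wh_{\mathcal{B}^*}(\multimot)$. After combining components related by the involution $v \leftrightarrow v^{-1}$, this induces a partition $\mathcal{B}^* = \mathcal{B}^*_1 \sqcup \cdots \sqcup \mathcal{B}^*_p$ such that each $w$ uses letters from a single class. A small argument using minimality (possibly after first replacing $\mathcal{B}^*$ by a sign-flipped basis of the same complexity) ensures $p \geq 2$, and then $F = \langle \mathcal{B}^*_1 \rangle * \cdots * \langle \mathcal{B}^*_p \rangle$ is the required splitting: the degenerate case of exactly two components swapped by the involution would admit further reduction via a sign-flip followed by a cut-vertex reduction, contradicting the choice of $\mathcal{B}^*$.

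For $(1) \Rightarrow (2)$, given a disconnected $\Wh_\mathcal{B}(\multimot)$, I apply Whitehead's algorithm to reduce complexity (which strictly succeeds so long as the current Whitehead graph is disconnected or has a cut vertex) to arrive at a minimal basis $\mathcal{B}^*$. The main obstacle is to conclude that $\Wh_{\mathcal{B}^*}(\multimot)$ is still disconnected, and moreover that this holds for \emph{every} minimal basis, not just the one produced by this reduction. Here I invoke the substantive content of Whitehead's theorem: a minimal Whitehead graph that is connected and has no cut vertex certifies that $\multimot$ lies in no proper free factor system of $F$. Since $(2) \Rightarrow (3)$ applied to $\mathcal{B}^*$ (assuming its graph is disconnected, as it must be by the above) yields a free splitting, and since a free splitting rel $\multimot$ is a basis-independent property, no other minimal basis can have connected cut-vertex-free Whitehead graph either. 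Hence every minimal Whitehead graph is disconnected.
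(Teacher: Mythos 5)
The paper offers no proof of its own here; it simply attributes the statement to Whitehead \cite{Whi36}, so the comparison is against the standard facts rather than an argument in the text. Your overall plan ($(3)\Rightarrow(1)$ via an adapted basis, $(2)\Rightarrow(3)$ by grouping components of $\Wh_{\mathcal{B}^*}$ under the involution $v\leftrightarrow\inv{v}$, then $(1)\Rightarrow(2)$) is a reasonable one and $(3)\Rightarrow(1)$ is fine, but two steps do not hold as written. The parenthetical in $(1)\Rightarrow(2)$, that Whitehead's algorithm ``strictly succeeds so long as the current Whitehead graph is disconnected,'' is false: for $\multimot=\{a\}$ in $F_2=\langle a,b\rangle$ the Whitehead graph is the single edge joining $a$ to $\inv{a}$ together with two isolated vertices $b,\inv{b}$; it is disconnected and already of minimal complexity~$1$. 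Without that claim you have no grounds to assert that $\Wh_{\mathcal{B}^*}$ is disconnected, and the rest of $(1)\Rightarrow(2)$ becomes circular, since it uses $(2)\Rightarrow(3)$ on $\mathcal{B}^*$ to manufacture the free splitting that was needed to certify $\Wh_{\mathcal{B}^*}$ disconnected in the first place.

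The second gap is the degenerate case in $(2)\Rightarrow(3)$. A sign-flip of $\mathcal{B}^*$ merely relabels the vertices of the Whitehead graph, so it yields the same graph with the same cut-vertex structure, and a disconnected graph need not have any cut vertex; the cut-vertex lemma therefore gives nothing here. What actually rules out a minimal disconnected graph whose components collapse to a single class under $v\leftrightarrow\inv{v}$ is the full Whitehead complexity formula rather than its cut-vertex corollary: for a Whitehead automorphism $(A,a)$ with $a\in A$, $\inv{a}\notin A$, the change in total cyclic length of $\multimot$ equals $e(A,A^c)-\deg(a)$, where $e(A,A^c)$ counts Whitehead-graph edges between $A$ and its complement. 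If $C$ is a component of $\Wh_{\mathcal{B}^*}$ containing at least one edge and some $a\in C$ with $\inv{a}\notin C$, then $e(C,C^c)=0$ because $C$ is a component, while $\deg(a)\geq 1$, so $(C,a)$ strictly reduces complexity, contradicting minimality. Consequently every non-singleton component of a minimal Whitehead graph is closed under the involution, which forces $p\geq 2$ whenever $\Wh_{\mathcal{B}^*}$ is disconnected. The same computation also repairs $(1)\Rightarrow(2)$: one checks that $(C,a)$ deletes every occurrence of $a^{\pm 1}$ from the cyclic words, leaving $a$ and $\inv{a}$ isolated and hence exhibiting a proper free factor, which is the step your reduction argument was missing.
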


It is easy to see that if $\F$ splits freely rel $\multiklass$ then
$\decomp$ is not connected:
\begin{corollary}\label{corollary:disconnected}
 If there is a basis $\basis$ such that $\Wh_{\basis}(\bp)\{\multiklass\}$ is not connected,
 then $\decomp$ is not connected.
\end{corollary}
The converse is also true, see \fullref{thm:freesplitting}.

\subsubsection{Standing Assumption}\label{standingassumption}
From now on, unless otherwise noted, we assume that $\multiklass=\{[\langle
\mot_1\rangle],\dots, [\langle\mot_k\rangle]\}$ is fixed and $\basis$ is a basis of $\F$ such that
  $\Wh_{\basis}(\bp)\{\multiklass\}$ is connected without cut vertices.
Let $\tree$ denote the Cayley tree of $\F$ with respect to $\basis$.

Having fixed a reference basis, we simplify notation by
considering the \emph{multiword} $\multimot=\{\mot_1,\dots,\mot_k\}$,
where the $\mot_i$ are cyclically reduced and generate
non-conjugate maximal cyclic subgroups.
Similarly, by choosing representatives we pass from the induced
multiclass in a vertex group of a splitting to an induced multiword,
and from a lifted multiclass in a finite index subgroup to a lifted
multiword.

We drop $\multiklass$ and $\basis$ from the notation
unless they are necessary for clarity.

\begin{remark}
There is no loss of generality from these assumptions. 
If $\F$ splits freely relative to $\multiklass$ then first pass to a
maximal relative free splitting and then deal with the factors separately.
If there is no such free splitting then
\fullref{proposition:Whitehead} says that to ensure the no-cut-vertex assumption it suffices to choose the basis
that gives the minimal complexity Whitehead graph.
However, minimality is not necessary.
\end{remark}

\subsubsection{Generalized Whitehead Graph and Friends}\label{sec:generalizedwhiteheadgraph}
For each
$\{(\inv{f}\inv{\mot}_if)^\infty,(\inv{f}\mot_if)^\infty\}\in\bdry\multimot$
there is a unique bi-infinite geodesic with endpoints
$(\inv{f}\inv{\mot}_if)^\infty$ and $(\inv{f}\mot_if)^\infty$\!.
The stabilizer $\langle\inv{f}\mot_if\rangle$ acts cocompactly with
translation length $|\mot_i|$.

\begin{definition}
  The \emph{line pattern generated by $\multimot$} is the set $\lines$ (=$\lines_{\multimot}$)
  of `lines', bi-infinite geodesics $\line$ in $\closure{\tree}$, with endpoints $\line^-\!,\,\line^+\in\bdry\tree$ such that $\{\line^-,\line^+\}\in\bdry\multimot$.
\end{definition}

\begin{definition}
Let $\X$ be a connected subset of $\closure{\tree}$ with $\X\cap\tree\neq\emptyset$.
The \emph{Whitehead graph over $\X$}, denoted
$\Wh(\X)$ (= $\Wh_{\basis}(\X)\{\multimot\}$) is a graph with one vertex
for each component of $\closure{\tree}\setminus \closure{\X}$ and one
edge $\we$ joining vertices $\wv$ and $\wv'$ for
each $\line\in\lines$ with one endpoint in $\wv$ and the other in $\wv'$.

Additionally, each vertex and edge carries a piece of data that
records whence it came:
Let $\tree_\wv$ denote the component of
$\closure{\tree}\setminus\closure{\X}$ corresponding to vertex $\wv$, and let
$\line_\we$ be the line in $\lines$ corresponding to edge $\we$.
\end{definition}

Recall that $\bp$ denotes the vertex of $\tree$ corresponding to the
identity element of $\F$, so the notation
$\Wh_{\basis}(\bp)\{\multiklass\}$ of the previous section refers to the
Whitehead graph over the vertex $\bp$.

We have partially defined functions from $\closure{\tree}$ and $\lines$ to
$\Wh(\X)$:
\begin{definition}
  If $x\in\closure{\tree}\setminus\closure{\X}$ define $\gr_\X(x)$ to
  be the vertex of $\Wh(\X)$ corresponding to the component
  of $\closure{\tree}\setminus\closure{\X}$ containing $x$.

If $\line\in\lines$ such that $\line\cap\X\neq\emptyset$ and
$\bdry\line\cap\bdry\closure{\X}=\emptyset$, define $\gr_\X(\line)$ to be the
edge of $\Wh(\X)$ contributed by $\line$.

We shorten the notation to $\gr$ when $\X$ is apparent. 
\end{definition}

The point of this extra data is to consider Whitehead graphs not just
as abstract graphs, but as pictures of $\lines$ in $\tree$.

\begin{figure}[h]
  \centering
\labellist
\small
\pinlabel \mbox{Closeup at $b$} at 480 250
\pinlabel \mbox{Closeup over $[\bp,a]$} [t] at 480 0
\endlabellist
  \includegraphics[width=\linewidth]{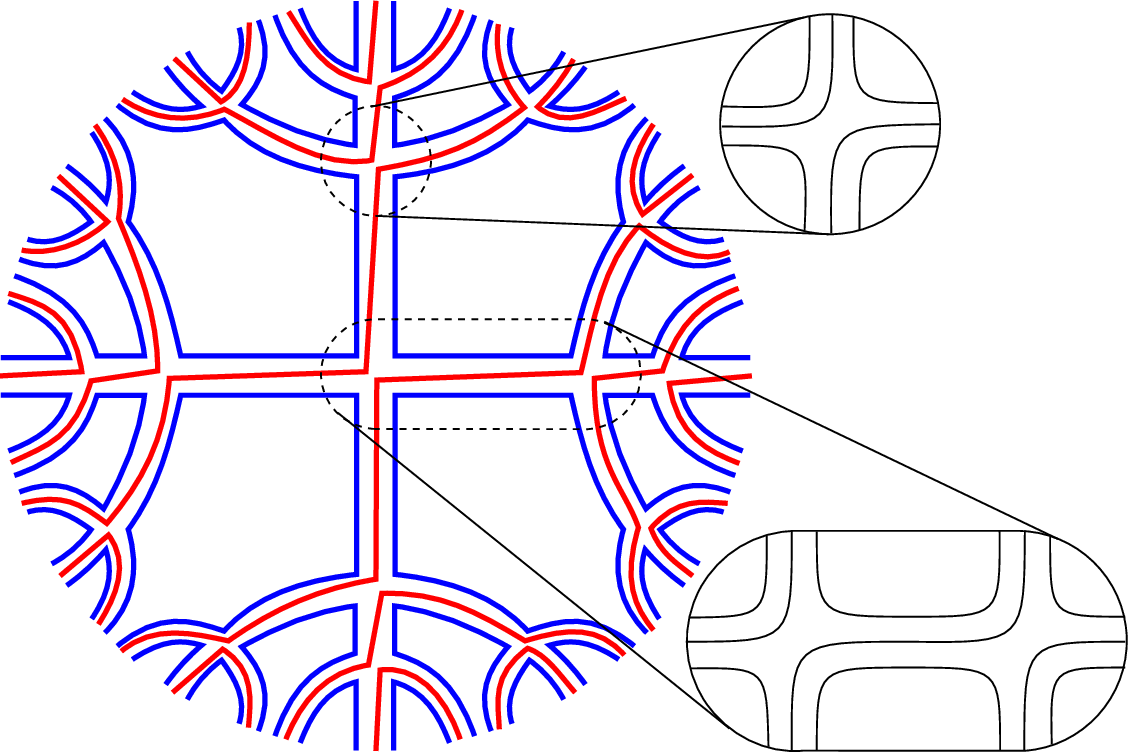}
  \caption{Line pattern $\lines_{\{ab,ab\bar{a}\bar{b}\}}$ with closeups.}
  \label{fig:linepattern}
\end{figure}

\begin{figure}[h]
  \centering
\labellist
\small
\pinlabel $\Wh(b)\{ab,ab\bar{a}\bar{b}\}$ at -30 0 
\pinlabel $\Wh([\bp,a])\{ab,ab\bar{a}\bar{b}\}$ at 550 0
\endlabellist
\includegraphics[height=1in]{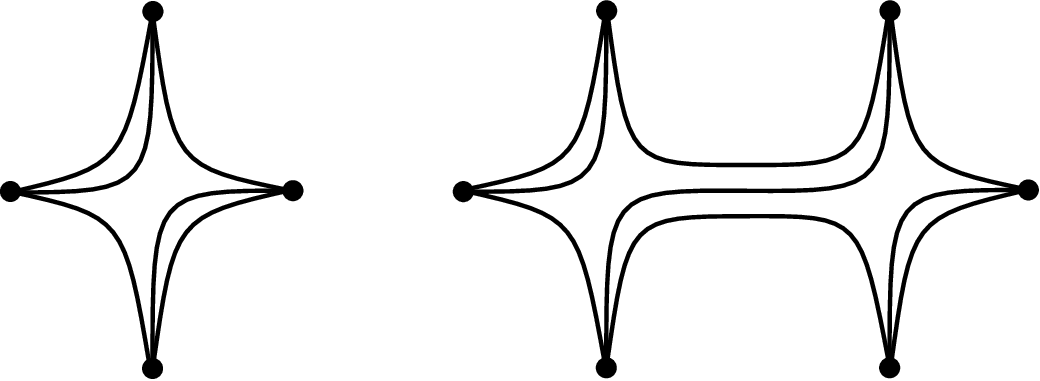}
  \caption{Whitehead graphs}
  \label{fig:whex}
\end{figure}

\fullref{fig:linepattern} depicts the line pattern
$\lines_{\{ab,ab\bar{a}\bar{b}\}}$ in $\F=\fingen{a,b}$.
It also shows closeups of the lines that pass through the vertex $b$
and through the edge $[\bp,a]$.
The Whitehead graphs $\Wh(b)\{ab, ab\inv{a}\inv{b}\}$ and
$\Wh([\bp,a])\{ab,ab\inv{a}\inv{b}\}$ in \fullref{fig:whex} show
the corresponding Whitehead graphs.

Notice that the Whitehead graph over $\X$ looks like the closeup of
the line pattern passing through $\X$, with lines exiting $\X$ through
a common edge of $\tree$ pinched to a vertex.
It will sometimes be convenient \emph{not} to do this pinching, but at
the same time remember the incidence of the edges.
For this we introduce the following formalism:

\begin{definition}
  A \emph{graph with loose ends} at $v_1,\dots,v_k$ is a
  graph $\Gamma$ with a specified subset of vertices
  $\{v_1,\dots,v_k\}$ that have been marked `deleted'. 
An edge $e$ of $\Gamma$ incident to a deleted $v_i$ is said to
\emph{have a loose end at $v_i$.}

A \emph{component} of a graph with loose ends is an equivalence class of edges and
undeleted vertices given by the incidence relation.
\end{definition}

\begin{definition}
For connected sets $\X\subset\Y\subset\closure{\tree}$ with
$\X\cap\tree\neq\emptyset$, let $\Wh(\X)\wrel\Y$ denote $\Wh(\X)$
with loose ends at each vertex $\wv\in\Wh(\X)$ such that
$\tree_\wv\cap\Y\neq\emptyset$.
\end{definition}
In other words, to construct $\Wh(\X)$ we look at the lines of
$\lines$ passing through $\X$ and pinch off a vertex for each edge $e$
of $\tree\setminus\X$ incident to $\X$.
For $\Wh(\X)\wrel \Y$, we do the same, except that we do not pinch a
vertex if $e\in\Y$.
Instead we will have loose ends at a deleted vertex corresponding to
such an $e$.
See \fullref{fig:whloose} for an example, and compare to \fullref{fig:whex}.
The utility of this definition is that it will let us build up large
Whitehead graphs from smaller pieces.
The idea, which will be made precise in \fullref{sec:cutting}, is that
if $\Y=\coprod_i\X_i$ is a connected subset of $\tree$ written as a
disjoint union of connected pieces $\X_i$ then we can build up $\Wh(\Y)$
from the various `graphs with loose ends' $\Wh(\X_i)\wrel\Y$ by
`splicing loose ends'.

\begin{figure}[h]
  \centering
  \includegraphics[height=1in]{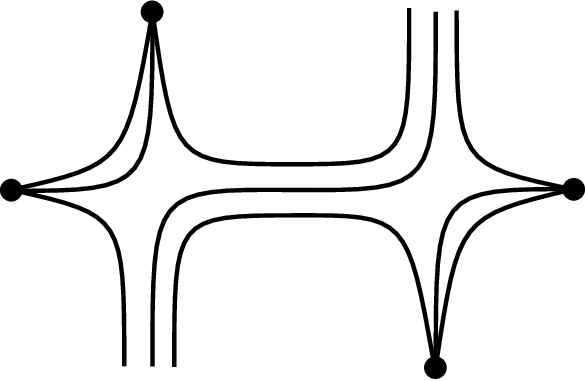}
  \caption{Whitehead graph with loose ends $\Wh([\bp,a])\{ab\bar{a}\bar{b},ab\}\wrel [\bar{b},ab]$}
  \label{fig:whloose}
\end{figure}

\begin{definition}
For connected sets $\X\subset\Y\subset\closure{\tree}$ with
$\X\cap\tree\neq\emptyset$, let $\wc$ be a component of
$\Wh(\X)\wrel\Y$.

$\wc$ \emph{has a loose end at} $\wv$ if it contains an edge with a loose end
at $\wv$.
$\wc$ \emph{has an end at}
$\xi\in\bdry\tree$ if $\xi$ is a limit point of
$\bigcup_{\we\in\wc}\line_\we$.  

Define $\bdry\tree_\wc=\coprod_{\wv\in\wc}\bdry\tree_\wv$.
\end{definition}

\subsubsection{Splicing}\label{sec:splicing}
Manning \cite{Man10} gave a construction for combining two graphs called
\emph{splicing}.
Let $\Gamma_0$ and $\Gamma_1$ be graphs with vertices
$\gamma_i\in\Gamma_i$ of the same valence.
Let a \emph{splice map} $\sigma$ be a bijection between edges incident to $\gamma_0$
and edges incident to $\gamma_1$.
The result of splicing $\Gamma_0$ and $\Gamma_1$ at $\gamma_0$ and
$\gamma_1$ by $\sigma$ is defined to be a graph whose vertices are the union of
vertices of $\Gamma_0$ and $\Gamma_1$, minus $\gamma_0$ and
$\gamma_1$.
Edges not incident to $\gamma_0$ or $\gamma_1$ are retained.
If $e_0=[u,\gamma_0]$ and $e_1=\sigma(e_0)=[\gamma_1,v]$, then add an
edge $[u,v]$ in the new graph.
In the above terminology, take the graph $\Gamma_0$ with
loose ends at $\gamma_0$ and the graph $\Gamma_1$
with loose ends at $\gamma_1$ and `tie up' the loose ends by matching
them using the given splice map $\sigma$.

\subsubsection{Cutting Whitehead Graphs into Pieces and Splicing them Together}\label{sec:cutting}
The following lemma is the motivation for the splicing construction:
\begin{lemma}\label{lemma:cutting}
For connected sets $\X\subset\Y\subset\closure{\tree}$ with
$\X\cap\tree\neq\emptyset$, suppose
$\X_0\amalg\X_1=\X\setminus e$ for some edge $e$ of $\X$ in $\tree$.
Then $\Wh(\X)\wrel\Y$ is obtained from $\Wh(\X_0)\wrel\Y$ and
$\Wh(\X_1)\wrel\Y$ by discarding any edges $\we$ with
$\bdry\l_\we\cap\closure{\X}\neq\emptyset$ and splicing remaining
loose ends at $\gr_{\X_0}(e)$ in $\Wh(\X_0)\wrel\Y$ to loose ends
at $\gr_{\X_1}(e)$ in $\Wh(\X_1)\wrel\Y$.
\end{lemma}

In this situation we say \emph{$\Wh(\X_i)\wrel\Y$ includes into
$\Wh(\X)\wrel\Y$ via splicing}. See \fullref{fig:inclusionviasplicing}.

\begin{figure}[h]
  \centering
\labellist
\pinlabel $\Wh(\bar{b})\wrel \Y$ at 70 0
\pinlabel $\Wh([\bp,a])\wrel \Y$ at 100 225
\pinlabel $\Wh(\Y)$ at 625 150
\pinlabel $\Wh(ab)\wrel \Y$ at 775 250
\endlabellist
  \includegraphics[width=\linewidth]{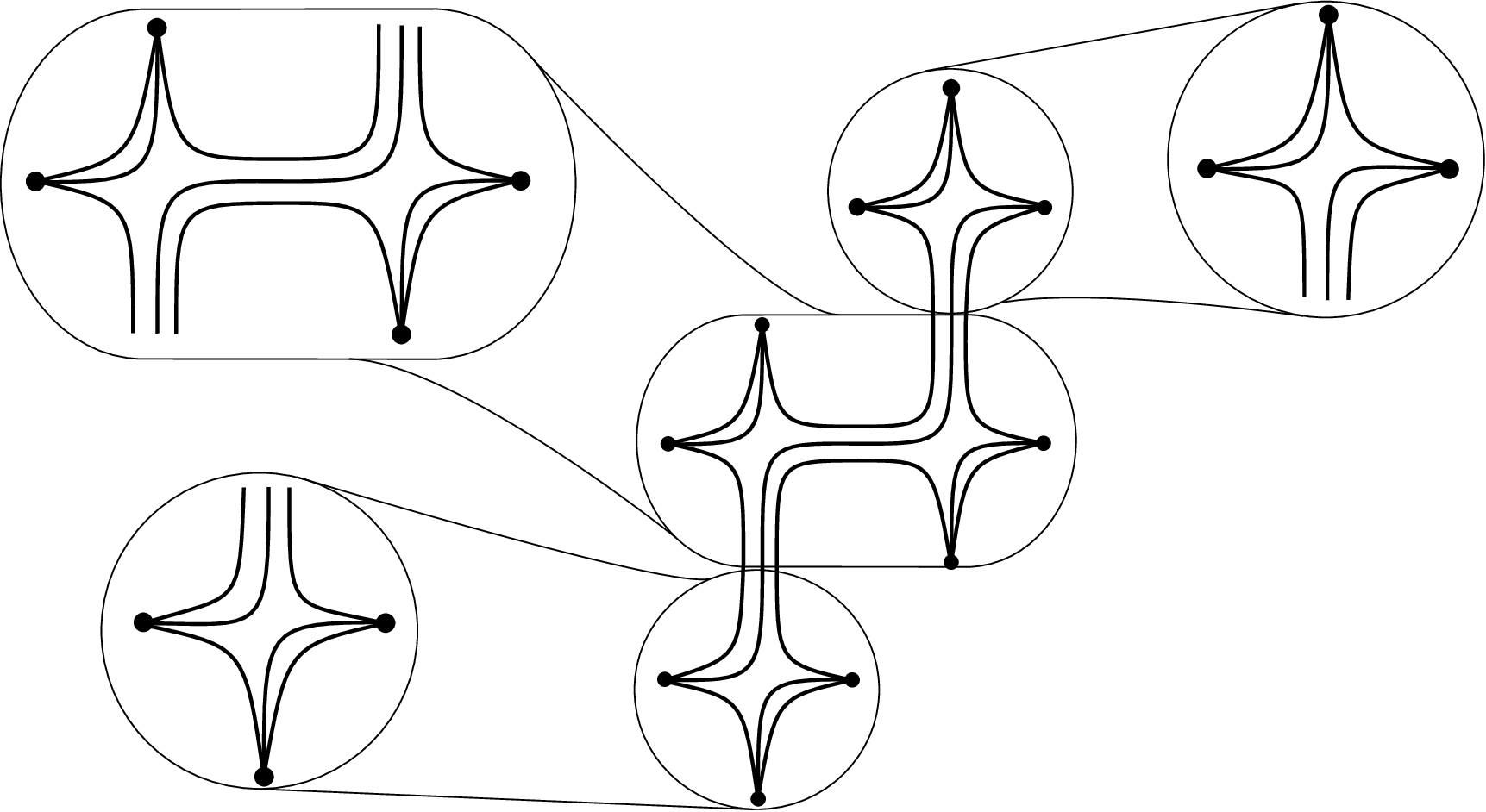}
  \caption{Inclusion via splicing for $\F=\fingen{a,b}$,
    $\multimot=\{ab\bar{a}\bar{b},ab\}$, $\Y=[\bar{b},ab]$}
  \label{fig:inclusionviasplicing}
\end{figure}

Manning states a similar result in the case that $\X=\Y$ is bounded.
The proof is immediate from the definitions.
The essential observation is that there is a natural splicing map defined by
sending an edge $\we\in\Wh(\X_0)\wrel\Y$ with a loose end at
$\gr_{\X_0}(e)$ to the unique edge $\we'\in\Wh(\X_1)\wrel\Y$ with
a loose end at $\gr_{\X_1}(e)$ such that $\line_\we=\line_{\we'}$.

Manning notes that splicing two graphs that are connected without cut
vertices yields a graph that is connected without cut vertices. 
$\Wh(\bp)$ is connected without cut vertices, by our standing assumption, so, by induction on the
number of vertices of $\X$:
\begin{lemma}[\cite{Man10}]\label{lem:boundedX}
  If $\X$ is a bounded connected subset of $\tree$ then $\Wh(\X)$ is
  connected without cut vertices.
\end{lemma}

\subsection{A First Connectivity Lemma}\label{sec:wgtd}
Recall that $\qmap\from\bdry\F\to\decomp$ is the quotient map.
\begin{lemma}[{\cite[Prop 2.1]{Ota92}\cite[Theorem~49]{Mar95}}]\label{lemma:basicconnectivity}
Let $e$ be an edge in $\tree$ with endpoints $u$ and $v$.
Let $\mathcal{A}=\shadow^u(v)$.
The set $A=\qmap(\mathcal{A})$ is connected in $\decomp$.
\end{lemma}
We include the proof for completeness:
\begin{proof}
Suppose $B,\,C\subset\decomp$ are open with
$A\subset B\cup C$ and $A\cap B\cap C=\emptyset$.
Let $\mathcal{B}=\mathcal{A}\cap \qmap^{-1}(B)$ and
$\mathcal{C}=\mathcal{A}\cap \qmap^{-1}(C)$.
Compactness of $\bdry \tree$ implies $\mathcal{B}$ and $\mathcal{C}$ are compact clopens. 
Since $\mathcal{B}$ is compact and open, there are finitely many vertices
$x_1,\dots ,x_a$ so that $\mathcal{B}=\cup_{i=1}^a\shadow^u(x_i)$. 
Assume $x_1,\dots ,x_a$ contains as few points as possible, ie, the
shadows of a
proper subcollection of the $x_i$ have union a proper subset of $\mathcal{B}$.
There is a similar finite collection $y_1,\dots , y_b$ that determines $\mathcal{C}$. 

Assume $\mathcal{B}$ is non-empty.
If $x_1=v$ then $\mathcal{B}=\mathcal{A}$, so $\mathcal{C}=\emptyset$,
and we are done.
Otherwise, consider the convex hull $\hull$ of $\{x_i\}_{i=1}^a\cup\{y_j\}_{j=1}^b\cup
\{u\}$; it is a finite tree with leaves $\{x_i\}_{i=1}^a\cup\{y_j\}_{j=1}^b\cup
\{u\}$.
Let $\X=\hull\setminus (\{x_i\}_{i=1}^a\cup\{y_j\}_{j=1}^b\cup
\{u\})$.
By \fullref{lem:boundedX}, 
$\Wh(\X)$ is connected without cut vertices.

Consider the vertex $\gr(x_1)\in\Wh(\X)$.
Since $\gr(u)$ is not a cut vertex, there are edges of $\Wh(\X)$ incident
to $\gr(x_1)$ and not $\gr(u)$.
Such an edge corresponds to a line $\line\in\lines$ with one
endpoint in the shadow of $x_1$ and the other endpoint in the shadow
of $z$ for some $z\in \{x_i\}_{i=2}^a\cup\{y_j\}_{j=1}^b$.
In the decomposition space these two endpoints are identified, so they
must both be in $\mathcal{B}$, hence $z\in \{x_i\}_{i=2}^a$.
Since $\Wh(\X)$ is connected and $\gr(u)$ is not a cut vertex, we conclude
that all the vertices of $\Wh(\X)$ except $\gr(u)$ belong to $\{\gr(x_1),\dots
,\gr(x_n)\}$, so $\mathcal{C}=\emptyset$. 
Thus, $A$ is connected.
\end{proof}

Together with \fullref{proposition:Whitehead}, this implies:
\begin{theorem}\label{thm:freesplitting}
Let $\multiklass$ be an arbitrary multiclass (not necessarily
satisfying the standing assumption of \fullref{standingassumption}).
  $\F$ admits a free splitting relative to $\multiklass$ if and only if
  $\decomp_{\multiklass}$ is not connected.
\end{theorem}
\begin{proof}
Choose a basis $\basis$ for $\F$ such that $\multiklass$ is
Whitehead minimal.
By \fullref{proposition:Whitehead}, $\F$ splits freely rel
$\multiklass$ if and only if $\Wh_{\basis}(\bp)\{\multiklass\}$ is not
connected.
If $\Wh_{\basis}(\bp)\{\multiklass\}$ is not connected then $\decomp$ is
not connected by \fullref{corollary:disconnected}. 

Suppose $\Wh_{\basis}(\bp)\{\multiklass\}$ is connected.
Pick an edge $e\in\tree$ with vertices $u$ and $v$.
Let $A=\shadow^u(v)\subset \bdry\tree$ and
$A'=\shadow^v(u)\subset\bdry\tree$.
Note $\bdry\tree=A\coprod A'$.
Whitehead minimality implies that $\Wh_{\basis}(\bp)\{\multiklass\}$ has no
cut vertex, so \fullref{lemma:basicconnectivity} applies.
Moreover, $\qmap(A)$ and $\qmap(A')$ have a point in common, since
$\Wh_{\basis}(\bp)\{\multiklass\}$ connected implies there is some line
in $\lines$ crossing $e$. 
Thus, $\decomp=\qmap(A)\cup\qmap(A')$ is a union of connected sets
with non-empty intersection, so it is connected.
\end{proof}


\section{The Topology of the Decomposition
  Space}\label{sec:topologyofdecompositionspace}
In this section we prove some results for later use.
We begin, in \fullref{sec:decompositionspaces} by recalling Moore's
Decomposition Theorem, which will be needed in \fullref{sec:vg}.
This theorem requires a technical hypothesis called `upper
semi-continuity'.

In \fullref{sec:saturated} we explicitly construct a neighborhood
basis for $\decomp$. 
It follows quickly that the decomposition we consider is upper
semi-continuous and that $\decomp$ is metrizable.
Furthermore, when $\decomp$ is connected it is locally connected and
arc connected.

In the two remaining subsections we prove technical results that will be
used in \fullref{sec:splitting} to show that when the \rjsj is
non-trivial there do exist cut points or uncrossed cut pairs of
$\decomp$, and that uncrossed cut pairs are rational.

The purpose of \fullref{sec:morewgtd} is to make rigorous the idea
that we can `see' a neighborhood of $x\in\decomp$ by looking at
components of a Whitehead graph over a ray in $\tree$ tending towards a point of
$\qmap^{-1}(x)$ in $\bdry\F$.
Precisely, given a point
$x\in\decomp$ and a neighborhood $N'$ of $x$ there exists a connected
open neighborhood $N\subset N'$ of $x$ such that
for any simplicial geodesic $\phi\from
  [0,\infty]\to\closure{\tree}$ with $\phi(\infty)\in\qmap^{-1}(x)$
 one of the following
holds:
\begin{itemize}
\item $\qmap^{-1}(x)\notin \bdry\multimot$ and there is a bijection
  between the components of $N\setminus \{x\}$ and components of
  $\Wh(\phi([1,\infty]))\wrel \phi([0,\infty])$.
\item $\qmap^{-1}(x)\in \bdry\multimot$ and there is a bijection
  between the half of the components of $N\setminus \{x\}$ and components of
  $\Wh(\phi([1,\infty]))\wrel \phi([0,\infty])$.
\end{itemize}
In the second case we only see half the components of
$N\setminus\{x\}$ because $|\qmap^{-1}(x)|=2$. 
The other
half of the components correspond to components of a Whitehead graph over a ray tending
towards the other point in $\qmap^{-1}(x)\in\bdry\F$.

There are two main results of \fullref{sec:detect}.
One, \fullref{lemma:cutpointsareinmultiword}, is a refinement of a rational approximation result from
\cite{CasMac11}.
The other, \fullref{lemma:partialcutpairhull}, says that if
$\mathcal{H}$ is the convex hull in $\closure{\tree}$ of the preimage
of a cut pair of $\decomp$ then at every point $x$ of $\hull$ the
Whitehead graph $\Wh(x)\wrel\hull$ has at least two components.

\subsection{Aside on General Decomposition
  Spaces}\label{sec:decompositionspaces}
We recall some results about more general
decomposition spaces.

Let $X$ be a topological space. Let $X=\coprod_{i\in I}X_i$
be a decomposition of $X$ as a disjoint union.
The \emph{non-degenerate} elements are the non-singleton $X_i$.
The \emph{decomposition space} is the index set $I$ with the
quotient topology induced by $X_i\mapsto i$.

\begin{definition}\label{def:usc}
  A decomposition $X=\coprod_{i\in I}X_i$ is \emph{upper
    semi-continuous} if for each $i\in I$ the set $X_i$ is
  compact and for each open set $U\supset X_i$ there
  is an open set $V\supset X_i$ such that for all
  $j\in I$ if $X_j\cap V$ is non-empty then
  $X_j\subset U$.
\end{definition}

\begin{lemma}[{\cite[Theorem 3-33]{HocYou88}}]\label{lemma:Hausdorff}
  The decomposition space of an upper semi-continuous decomposition of
  a compact Hausdorff space is Hausdorff.
\end{lemma}

The following theorem will be used in \fullref{sec:vg}.
\begin{theorem}[Moore's Decomposition Theorem \cite{Moo25}]\label{theorem:Moore}
  An upper semi-continuous decomposition of the 2--sphere into
  connected, non-separating sets has decomposition space homeomorphic
  to the 2--sphere.
\end{theorem}

\subsection{Basic Topology of the Decomposition
  Space}\label{sec:saturated}
We now return our attention to the decomposition space $\decomp$ of $\bdry\F$
associated to $\multimot$.

Let $\lines|_\X=\{\l\in\lines\mid
\l\cap\X\neq\emptyset\}$.
If $\X$ is bounded, $\lines|_\X$ is finite.

\subsubsection{Neighborhood Basis}\label{sec:nbhdbasis}
The goal of this section is to describe a neighborhood basis of
$\decomp$.
A set $\mathcal{A}$ in $\bdry\tree$ is \emph{saturated} if
$\mathcal{A}=\qmap^{-1}(\qmap(\mathcal{A}))$.
The $\qmap$--image of an open saturated set is open.
First we build open saturated neighborhoods of points $\xi\in\bdry\tree$.
The construction proceeds in stages. The idea is to start with a basic neighborhood
of $\xi$ in $\bdry\tree$. This neighborhood might not be saturated, so we add some points
to make it saturated. The resulting set might not be open, so add
basic neighborhoods of all of the newly added points, and repeat.

Let $\maxoverlap$ be the `maximum overlap' in $\lines$: the maximum length of $\l\cap \l'$
for distinct lines $\l$ and $\l'$ in $\lines$. This is finite, since $\multimot$ consists of finitely many words. 

Let $\xi_0$ be a point in $\bdry\tree$, and let $r\in\mathbb{N}$.

Let $\phi_0\from [0,\infty]\onto [\bp,\xi_0]$ be the simplicial geodesic.
If $\xi_0$ is the endpoint of a line $[\xi_0,\xi_1]\in\lines$ let
$r_0=\max\{r,1+d_\tree(\bp,[\xi_0,\xi_1])\}$.
Otherwise, there is at most one line of $\lines$ with one endpoint in $\shadow^\bp(\phi_0(r+M+1))$
and the other in the complement of $\shadow^\bp(\phi_0(r))$.
If there are none, let $r_0=r$.
If there is one, $\line$, let $r_0>r$ be the minimal number such that $\line\cap [\bp,\xi_0]\subset\phi_0([0,r_0+M])$.
Then no line of $\lines$ has one endpoint in $\shadow^\bp(\phi_0(r_0+M+1))$
and the other in the complement of $\shadow^\bp(\phi_0(r_0))$, since
such a line would overlap $\line$ by more than $\maxoverlap$.

Consider two neighborhoods of $\xi_0$, an `inner neighborhood'
$\mathcal{S}_0=\shadow^\bp(\phi_0(r_0+\maxoverlap+1))$ contained in an `outer neighborhood'
$\mathcal{O}_0=\shadow^\bp(\phi_0(r_0))$.
Let $\lines_0'$ be the lines of $\lines$ with exactly one
endpoint in $\mathcal{S}_0$ and both
endpoints in $\mathcal{O}_0$.

If $\xi_0$ does not belong to an element of $\bdry\multimot$ then
$\lines_0'$ accounts for all lines with exactly one endpoint in
$\mathcal{S}_0$ and we define `stage 0' to be $\{\mathcal{S}_0\}$.
Otherwise $\line=[\xi_0,\xi_1]$ is the unique line with one endpoint in
$\mathcal{S}_0$ and one outside of $\mathcal{O}_0$.
Let $\phi_1\from [0,\infty]\onto[\bp,\xi_1]$
be the simplicial geodesic.
Let $\mathcal{S}_1=\shadow^\bp(\phi_1(r_0+\maxoverlap+1))$ and $\mathcal{O}_1=\shadow^\bp(\phi_1(r_0))$.
Let
$\lines_1'=\left(\lines|_{\phi_1(r_1+\maxoverlap)}\cap\lines|_{\phi(r_1+\maxoverlap+1)}\right)\setminus
\{\l\}$.
Define `stage 0' to be $\{\mathcal{S}_0, \mathcal{S}_1\}$.
Note in this case $\qmap(\line^-)=\qmap(\line^+)\in\qmap(\mathcal{S}_0)\cap\qmap(\mathcal{S}_1)$.

Suppose now we have constructed stage $k$.
Let $r_{k+1}=r_k+\maxoverlap+1$.
Let $I$ be the indices of the stage $k$ sets.
For $i\in I$ and $\l_{i,j}\in\lines_i'$, 
let $\phi_{i,j}\from[0,\infty]\onto [\bp,\xi_{i,j}]$ be the simplicial geodesic, where
$\xi_{i,j}$ is the endpoint of $\l_{i,j}$ that is not in $\mathcal{S}_i$.
Define an inner neighborhood
$\mathcal{S}_{i,j}=\shadow^\bp(\phi_{i,j}(r_{k+1}+\maxoverlap+1))$ and outer
neighborhood $\mathcal{O}_{i,j}=\shadow^\bp(\phi_{i,j}(r_{k+1}))$ of $\xi_{i,j}$.

For fixed $i$ and all $j$ we have
$\mathcal{O}_{i,j}\subset\mathcal{O}_i$ and
$\mathcal{O}_{i,j}$ is disjoint from $\mathcal{S}_i$ and from every
$\mathcal{O}_{i,j'}$ for $j'\neq j$.

Let $\lines_{i,j}'$ be the lines of $\lines$ with exactly one
endpoint in $\mathcal{S}_{i,j}$ and with both endpoints in $\mathcal{O}_{i,j}$.
We know that there is one line of $\lines$ with an endpoint in
$\mathcal{S}_{i,j}$ and the other not in $\mathcal{O}_{i,j}$ --- it is
$\l_{i,j}$, and its other endpoint is in $\mathcal{S}_i$.
There can be no other such line, for it would overlap $\line_{i,j}$ by
more than $\maxoverlap$.

Call $\mathcal{S}_i$ the \emph{predecessor} of $\mathcal{S}_{i,j}$. 
Let `stage $k+1$' be the sets $\mathcal{S}_{i,j}$ produced from the sets of stage $k$.
By construction, the stage $k+1$ sets are disjoint from each other and
from all sets in the previous stages.
This is true for their $\qmap$ images in $\decomp$ as well, except
that $\qmap(\mathcal{S}_{i,j})\cap\qmap(\mathcal{S}_i)=\qmap(\line_{i,j}^-)=\qmap(\line_{i,j}^+)$.

Repeat this construction for as many stages as possible, potentially
infinitely many, to produce a
collection $\{\mathcal{S}_\tau\}_\tau$ of disjoint open sets.
Let $\mathcal{S}(r)=\cup_\tau\mathcal{S}_\tau$.
By construction, $\mathcal{S}(r)$ is an open saturated neighborhood of
$\xi_0$, so its image $S_{\xi_0}(r)=\qmap(\mathcal{S}(r))$ is an open neighborhood
of $\qmap(\xi_0)$.
\begin{definition}
For $x\in\decomp$ pick $\xi_0\in\qmap^{-1}(x)$ and define $\nbhd(x,r)=S_{\xi_0}(r)$. 
\end{definition}
By construction, $\nbhd(x,r)$ does not depend on the 
choice of $\xi_0\in\qmap^{-1}(x)$. 

It is immediate from the construction that: 
\begin{proposition}\label{proposition:neighborhoodbasis}
 $\{\nbhd(x,r)\mid r\in\mathbb{N}\}$ is a neighborhood basis for $x$.
\end{proposition}

\subsubsection{Upper Semi-continuity}
\begin{proposition}\label{proposition:usc}
  The decomposition of $\bdry\tree$ whose non-degenerate elements are $\bdry\multimot$ is an
  upper semi-continuous decomposition.
\end{proposition}
\begin{proof}
This  follows directly from
\fullref{sec:nbhdbasis} since the sets $S_{\xi_0}$ are saturated.
\end{proof}

\begin{corollary}\label{proposition:hausdorff}
  $\decomp$ is Hausdorff.
\end{corollary}

\begin{proposition}\label{corollary:metrizable}
  $\decomp$ is metrizable.
\end{proposition}
\begin{proof}
 $\qmap\from \bdry\tree\to\decomp$ is a continuous map from a compact
  space to a Hausdorff space, by \fullref{proposition:hausdorff}, so it
  is a closed map.
The codomain is first-countable by \fullref{proposition:neighborhoodbasis}.
The domain is metrizable, so a theorem of Stone \cite[Theorem 1]{Sto56} says $\decomp$ is metrizable.
\end{proof}

\subsubsection{Connectivity}

\begin{lemma}\label{lemma:foreshadowconnected}
For every $x\in\decomp$ and $r\in\mathbb{N}$, the set $\nbhd(x,r)$ is connected.
\end{lemma}
\begin{proof}
  \fullref{lemma:basicconnectivity} says that for each of the sets $\mathcal{S}_\tau$ in the
  construction of $\nbhd(x,r)$, the set $\qmap(\mathcal{S}_\tau)$
  is connected in $\decomp$.
If there are two stage 0 sets their $\qmap$--images
have a point in common, and the $\qmap$--image of every set from a
higher stage has a point in common with the $\qmap$--image of its predecessor.
Thus $\nbhd(x,r)$ is connected.
\end{proof}

\begin{proposition}[{cf \cite[Proposition~2.1]{Ota92}}]\label{corollary:connected}
$\decomp$ is connected and locally connected.
\end{proposition}
\begin{proof}
It is locally connected by \fullref{lemma:foreshadowconnected}.

Let $u$ and $v$ be neighboring vertices in $\tree$.
Let $\mathcal{A}=\shadow^u(v)$. Then
$\mathcal{A}^c=\bdry\tree\setminus\mathcal{A}=\shadow^v(u)$.
By \fullref{lemma:basicconnectivity}, $\qmap(\mathcal{A})$ and
$\qmap(\mathcal{A}^c)$ are each connected.
$\Wh(\bp)$ is connected, so there exists
$\line\in\lines|_u\cap\lines|_v$, and
$\qmap(\line^-)=\qmap(\line^+)\in\qmap(\mathcal{A})\cap\qmap(\mathcal{A}^c)$.
\end{proof}

An \emph{arc} is an embedded path.
A space is \emph{arc-connected} if any two points can be joined by an arc.
A space is \emph{Peano} if it is compact, Hausdorff, connected,
locally connected, and metrizable.\footnote{Alternatively, a space is Peano if it is a continuous image of the
unit interval. These definitions are equivalent for Hausdorff spaces by the Hahn-Mazurkiewicz Theorem.}
\begin{proposition}\label{lem:peano}
  $\decomp$ is Peano.
\end{proposition}
\begin{proof}
$\decomp$ is compact. It is Hausdorff by
\fullref{proposition:hausdorff}, connected and locally connected by
\fullref{corollary:connected}, and metrizable by \fullref{corollary:metrizable}.
\end{proof}

\begin{theorem}[{\cite[Theorem~31.2]{Wil04}}]
  A Peano space is arc-connected.
\end{theorem}
\begin{corollary}\label{corollary:arcconnected}
    $\decomp$ is arc-connected.
\end{corollary}

\subsection{Whitehead Graphs and Cut Sets of the
  Decomposition Space}\label{sec:morewgtd}

\begin{lemma}\label{lemma:rationalcutpoints}
  If $x\in\decomp$ such that $|\qmap^{-1}(x)|=1$ then $x$ is not a
  cut point.
\end{lemma}
\begin{proof}
  Let $\xi=\qmap^{-1}(x)$. 
Let $\phi\from [0,\infty] \onto [\bp,\xi]$ be the simplicial geodesic.
For all $i\in\mathbb{N}$ the set $\qmap(\shadow^{\phi(i)}(\phi(i-1)))$ is
connected, by \fullref{lemma:basicconnectivity}.
 $\decomp\setminus \{x\}$ is an increasing union of such sets,
so it is connected.
\end{proof}
\begin{proposition}[{cf \cite[Lemma 4.9]{CasMac11}}]\label{lemma:connectedhull}
  Let $S\subset\decomp$ be closed with $|\qmap^{-1}(S)|>1$, and let $\hull$ be the convex hull of $\qmap^{-1}(S)$. 
Then $\wc\mapsto \qmap(\bdry\tree_\wc)$ is a bijection between components of $\Wh(\hull)$ and components of $\decomp\setminus S$.
\end{proposition}
\begin{proof}
Let $\wc$ be a component of $\Wh(\hull)$.
Let $\wv$ be a vertex of $\wc$.
By \fullref{lemma:basicconnectivity}, $\qmap(\bdry\tree_\wv)$ is connected in
$\decomp$.
If vertices $\wv$ and $\wv'$ are joined by an edge $\we$ in $\wc$
then, by definition, 
$\l_\we\in\lines$ is a line with one endpoint in $\bdry\tree_\wv$ and
the other in $\bdry\tree_{\wv'}$, so $\qmap(\bdry\tree_\wv)$ and
$\qmap(\bdry\tree_{\wv'})$ have a point in common.
This implies $\qmap(\bdry\tree_\wc)$ is
connected.
It is also open, since $\bdry\tree_\wc$ is open and saturated.

Since $S$ is closed, for every
$\xi\in\bdry\tree\setminus\qmap^{-1}(S)$ there is a vertex $v\in\hull$
such that $[v,\xi]\cap\hull=\{v\}$.
Therefore, $\gr_\hull(\xi)$ is a vertex of $\Wh(\hull)$
with $\xi\in\bdry\tree_{\gr_\hull(\xi)}$.

Letting $\wc$ range over all components of $\Wh(\hull)$, we
get disjoint, connected, open subsets of the form
$\qmap(\bdry\tree_\wc)$, whose union is all of $\decomp\setminus S$.
\end{proof}

\begin{lemma}\label{lemma:ends}
  Let $S\subset\decomp$ be a closed minimal cut set, and let $\hull$ be the convex hull of $\qmap^{-1}(S)$.
For all $x\in S$, each component $\wc$ of $\Wh(\hull)$ has an end at
a point in $\qmap^{-1}(x)$.
\end{lemma}
\begin{proof}
  $\wc$ has an end at a point in $\qmap^{-1}(x)$ if and only if $\qmap(\bdry\tree_\wc)$ has $x$ as
  a limit point, which it must, by \fullref{lemma:limitpoints}.
\end{proof}

\begin{definition}
Let $\phi\from [0,l]\to\closure{\tree}$ be a simplicial geodesic
for some $l\in\mathbb{N}\setminus \{1\}$.
  An edge path $\we_0,\dots, \we_k$ in $\Wh(\phi([1,l-1]))\wrel \phi([0,l])$ is
  \emph{non-backtracking} if the path in $\tree$ obtained by
  concatenating the segments $\line_{\we_i}\cap\phi([0,l])$ does not backtrack.
\end{definition}

\begin{lemma}\label{lemma:nonbacktracking}
For $l\in\mathbb{N}\setminus \{1\}$, let $\phi\from [0,l]\to\tree$ be
a simplicial geodesic.
Let $\we$ be an edge of $\Wh(\phi([1,l-1]))\wrel \phi([0,l])$.
There exists a non-backtracking edge path $\we=\we_0,\dots, \we_k$ with 
$\l_{\we_k}\in\lines|_{\phi(0)}$.
\end{lemma}
\begin{proof}
Let $r$ be the smallest integer such that $\phi(r)\in\line_{\we_0}$.
If $r\leq 0$ we are done.
Suppose not.
By minimality of $r$, the edge $\gr_{\phi(r)}(\l_{\we_0})$ is incident to an undeleted vertex $\wv\in\Wh(\phi(r))\wrel\phi([0,l])$.

$\Wh(\phi(r))$ is connected without cut vertices, so the vertex $\gr_{\phi(r)}(\phi(r+1))$ is not a cut vertex.
Thus, there is an edge path $\we'_1,\dots, \we'_j$ in
$\Wh(\phi(r))$ connecting $\wv$ to
$\gr_{\phi(r)}(\phi(r-1))$ that does not go through the vertex
$\gr_{\phi(r)}(\phi(r+1))$.
Choosing the shortest such path guarantees that
$\l_{\we_i'}\cap\phi([0,l])=\phi(r)$ for all $i<j$, and
$\l_{\we_j'}\cap\phi([0,l])=\phi([r',r])$ for some $r'<r$.
Extend the existing edge path by edges
$\we_i=\gr_{\phi([1,l-1])}(\l_{\we_i'})$.
This gives a non-backtracking edge path beginning with $\we$ that
reaches closer to $\phi(0)$. Proceed by induction.
\end{proof}

The number of lines in $\lines|_e$ for an edge $e\in\tree$
corresponding to basis element $\basic$ is equal to the valence of the
$\basic$--vertex in $\Wh(\bp)$.

\begin{corollary}\label{corollary:boundedraycomponents}
  The
maximum valence of a vertex in $\Wh(\bp)$ is an upper bound for the
number of components of $\Wh(\phi([1,\infty]))\wrel\phi([0,\infty])$.
\end{corollary}

\begin{proposition}\label{proposition:finitelymanycomponents}
  $\decomp\setminus S$ has finitely many
  components for every finite $S\subset\decomp$.
\end{proposition}
\begin{proof}
Let $\{\xi_1,\dots, \xi_k\}=\qmap^{-1}(S)$, and suppose
$\decomp\setminus S$ is not connected.
Then $k\geq 2$, by \fullref{lemma:rationalcutpoints}.
  Let $\hull$ be the convex hull of
  $\qmap^{-1}(S)$.
If $k=2$, let $\X$ be an arbitrary vertex in $\hull$; otherwise, let $\X$ be the convex hull of the branch points of
$\hull$.
For $1\leq i\leq k$, let 
$\phi_i\from[0,\infty]\into\closure{\tree}$ be the simplicial geodesic
ray with $\phi_i([0,\infty])\cap\X=\phi(0)$ and $\phi(\infty)=\xi_i$.
By \fullref{lemma:cutting}, $\Wh(\hull)$ is obtained by splicing the
$\Wh(\phi_i([1,\infty]))\wrel\hull$ to $\Wh(\X)\wrel\hull$.
The former have finitely many components by
\fullref{corollary:boundedraycomponents}, and the later is finite, so
$\Wh(\hull)$ has finitely many components.
By \fullref{lemma:connectedhull}, 
$\decomp\setminus S$ has finitely many components. 
\end{proof}

\begin{lemma}\label{lemma:connectedray}
Let $\phi\from [-\infty,\infty]\onto [\inv{g}^\infty,g^\infty]$ be a
simplicial geodesic for some cyclically reduced $g\in\F$.
Let $\line$, $\line'\in\lines|_{\phi((0,1))}$.
Then $\gr_{\phi([-\infty,\infty])}(\line)$ and
$\gr_{\phi([-\infty,\infty])}(\line')$ are in the same component of
$\Wh(\phi([-\infty,\infty]))$ if and only if
$\gr_{\phi([1,\infty])}(\line)$ and
$\gr_{\phi([1,\infty])}(\line')$ are in the same component of $\Wh(\phi([1,\infty]))\wrel\phi([0,\infty])$.
\end{lemma}
\begin{proof}
$\Wh(\phi([1,\infty]))\wrel\phi([0,\infty])$ includes into $\Wh(\phi([-\infty,\infty]))$ via splicing, so the `if' direction is clear.

For the converse, consider the finitely many lines in
$\lines|_{\phi((0,1))}$.
For every pair $\line_i$ and $\line_j$ that contribute edges to a common component of
$\Wh(\phi([-\infty,\infty]))$, choose an edge path $\mathfrak{P}_{i,j}$ connecting them. 

Define a $g$--action on $\Wh(\phi([-\infty,\infty]))$ by sending edge
$\we$ to edge $\gr_{\phi([-\infty,\infty])}(g\line_\we)$.
The $g$--action permutes components, and by replacing $g$ with a
suitable power we may assume the permutation is trivial.
Let $m$ be sufficiently large so that for all $i$ and $j$ and all
$\we\in\mathfrak{P}_{i,j}$, we have
$g^m\line_\we\cap\phi([-\infty,\infty])\subset\phi([1,\infty])$.

By \fullref{lemma:nonbacktracking}, there is a non-backtracking edge
path connecting $\gr_{\phi([1,\infty])}(\line)$ to an edge $\we$ such
that $\line_\we\in \lines|_{g^m\phi(1)}$, and a similarly defined edge
$\we'$ for $\line'$.
Since the $g$--action is trivial on components, $\gr_{\phi([-\infty,\infty])}(\line_\we)$ and $\gr_{\phi([-\infty,\infty])}(\line_{\we'})$ are
in the same component of $\Wh(\phi([-\infty,\infty]))$.
Therefore, they are connected in
$\Wh(\phi([1,\infty]))\wrel\phi([0,\infty])$ by one of the $g^m\mathfrak{P}_{i,j}$.
Concatenating these three paths connects $\gr_{\phi([1,\infty])}(\line)$ and
$\gr_{\phi([1,\infty])}(\line')$ in $\Wh(\phi([1,\infty]))\wrel\phi([0,\infty])$.
\end{proof}

\begin{lemma}\label{lemma:niceneighborhood}
For every $x\in\decomp$ and every neighborhood $N'$ of $x$ and every
simplicial geodesic $\phi\from [0,\infty]\to \closure{\tree}$ with $\qmap(\phi(\infty))=x$
there exists a connected open neighborhood $N\subset N'$ of $x$
such that:
  \[\# (N\setminus \{x\}) = |\qmap^{-1}(x)|\lim_{r\to\infty}\#\left
    (\Wh(\phi([r,\infty]))\wrel\phi([0,\infty])\right)\]
Here, $|\cdot|$ denotes cardinality and $\#(\cdot)$ denotes number of
components.
\end{lemma}
Note that $x$
  is a cut point in $\decomp$ if and
  only if: \[\lim_{r\to\infty}\#\left
    (\Wh(\phi([r,\infty]))\wrel\phi([0,\infty])\right)>1\text{ and
    }|\qmap^{-1}(x)|=2\]
\begin{proof}
As $r$ increases,
$\#\left(\Wh(\phi([r,\infty]))\wrel\phi([0,\infty])\right)$ is a
non-decreasing sequence of integers that by
\fullref{corollary:boundedraycomponents} is bounded above.
Thus, for any sufficiently large $r_1$ we may assume that the limit has
been achieved and that 
$\qmap(\shadow^{\phi(0)}(\phi(r_1)))\subset N'$.
Let $\wc_1,\dots,\wc_k$ be the components.
For each $i$, let $\line_{i,1},\dots \line_{i,m_i}$ be the lines of
$\lines$ that cross the edge $\phi((r_1-1,r_1))$ and satisfy
$\gr_{\phi([r_1,\infty])}(\line_{i,j})\in\wc_i$.

For each $\line_{i,j}$ take a connected open neighborhood $N_{i,j}$ of
$\qmap(\line_{i,j}^+)$ as in \fullref{sec:nbhdbasis}.
We can take these neighborhoods small enough so that they are disjoint
and contained in $N'$.
For each $i$ define $N_i=
\qmap(\bdry\tree_{\wc_i})\cup\bigcup_jN_{i,j}$. 
The $N_i$ are disjoint, connected, open sets that all have $x$ as a limit point, and
$x$ is the only limit point that any two have in common. 

If $\qmap^{-1}(x)=\phi(\infty)$ then let $N=\{x\}\cup \bigcup_i N_i$
and we are done.
Otherwise, let $\phi'$ be a simplicial geodesic ray converging to the
other point in $\qmap^{-1}(x)$, and
repeat the construction to produce
connected open sets $N'_1,\dots, N'_k$.
The number $k$ of components is the same since there is a group
element acting cocompactly on $[\phi'(\infty),\phi(\infty)]$.
Let $N=\{x\}\cup \bigcup N_{i}\cup\bigcup N'_{i}$.
\end{proof}

\begin{proposition}\label{lemma:niceneighborhood2}
Let $\phi\from [-\infty,\infty]\onto [h\inv{\mot}^\infty,h\mot^\infty]$ be a
  simplicial geodesic for some $\mot\in\multimot$. 
Let $x_0=\qmap(h\mot^\infty)$, and suppose
  $x_0$ is not a cut point.
\[\lim_{r\to\infty}\#(\Wh(\phi([r,\infty]))\wrel\phi([0,\infty]))=\#(\Wh(\phi([1,\infty]))\wrel\phi([0,\infty]))=1\]
Furthermore, for every neighborhood $N'$ of $x_0$
  there exists an open connected neighborhood $N\subset N'$ of $x_0$
  such that $N\setminus\{x_0\}$ has precisely two components.
\end{proposition}
\begin{proof}
The first statement is true because $\left< w\right>$ acts cocompactly on $\phi((-\infty,\infty))$
and $x_0$ is not a cut point.
The second statement is an application of \fullref{lemma:niceneighborhood}.
\end{proof}

The next fact will be used in \fullref{sec:vg}, but it is
convenient to prove it now:
\begin{proposition}\label{corollary:arcconnectedcomponents}
  If $A$ is the closure of a complementary component of a cut point
  or cut pair in $\decomp$, then $A$ is arc-connected.
\end{proposition}
\begin{proof}
Let $x$ be the cut point or one of the points of the cut pair.
Let $N'$ be a neighborhood of $x$ in $\decomp$.
Construct open connected sets $N_i\subset N'$ as in 
\fullref{lemma:niceneighborhood}.
Let $N=\{x\}\cup \bigcup_{N_i\subset A} N_i$.
Then $N\subset A\cap N'$ is a connected set containing $x$ that is
open in $A$.
Therefore, $A$ is locally connected at $x$.

$A$ is arc-connected just as in \fullref{corollary:arcconnected},
since all of the other necessary properties are inherited from $\decomp$.
\end{proof}

\subsection{Cut Point and Cut Pair Detection and Approximation}\label{sec:detect}
\begin{proposition}\label{lemma:cutpointsareinmultiword}
If $x\in\decomp$ is a cut point then the stabilizer of $x$ is
conjugate to $\left< \mot\right>$ for some $\mot\in\multimot$.
In particular, there are finitely many orbits of cut points.
\end{proposition}
\begin{proof}
By \fullref{lemma:rationalcutpoints}, $\qmap^{-1}(x)$ consists of two
points, so $\qmap^{-1}(x)=\{\inv{h}\inv{\mot}^\infty,\inv{h}\mot^\infty\}$ for some $h\in \F$ and $\mot\in\multimot$.
The stabilizer of $x$ is therefore $\inv{h}\langle\mot\rangle h$.
\end{proof}

\begin{lemma}[Rational Approximation cf {\cite[Lemma 4.12]{CasMac11}}]\label{lemma:periodic}
Let $\phi\from [0,\infty]\to\closure{\tree}$ be a simplicial geodesic.
There exist elements $g$, $h\in \F$ and
$a\in\basis\cup\inv{\basis}$ such that:
\begin{enumerate}
\item the oriented edges $[h,ha]$ and $[gh,gha]$ belong to $\phi([0
,\infty])\cap[\inv{g}^\infty,g^\infty]$,
\item components of $\Wh([ha,gh])\wrel [h,gha]$ that
 are in different components of $\Wh(\phi([1,\infty]))\wrel\phi([0,\infty])$ are in different
 components of $\Wh([\inv{g}^\infty,g^\infty])$, and\label{item:consistent}
\item for each line $\l\in\lines|_{h}\cap\lines|_{ha}$, the lines
  $\l$ and $g\l\in\lines|_{gh}\cap\lines|_{gha}$ contribute
  edges to the same
  component of $\Wh(\phi([1,\infty]))\wrel\phi([0,\infty])$.\label{item:preserves}
\end{enumerate}  
Moreover, $g$ is conjugate to an element whose word length with respect to $\basis$
is bounded in terms of the rank of $\F$ and the maximum
valence among vertices in $\Wh(\bp)$, independent of $\phi$.
\end{lemma}
\begin{proof}
$n=\mathrm{rank}(\F)$. Let $x$ be the maximum valence of
$\Wh(\bp)$. Let $y$ be the $x$--th
Bell number, the number of distinct partitions of $x$ items into
nonempty subsets. Let
$z=1+(2n)^{y+2}$.
Along any directed segment $\X$ of $\phi([1,\infty])$ of length $z$ there is some
$a\in\basis\cup\inv{\basis}$ such that there are
  at least $y+2$ many directed $a$--edges in the segment.
Fix the first of these, $e=[g_0,g_0a]$.

Fix a numbering of the components of $\Wh(\phi([1,\infty]))\wrel \phi([0,\infty])$.
The set $\lines|_e$ of lines of $\lines$ that contain $e$ is finite.
Fix a numbering of them $1,\dots, k$.
Of course, $k\leq x$, by \fullref{corollary:boundedraycomponents}.
Partition them into subsets according to which component of
$\Wh(\phi([1,\infty]))\wrel \phi([0,\infty])$ the corresponding edge belongs.

Consider an element $g'\in \F$ such that the oriented edge $g'e$ is in
$\X$.
There is a bijection $\lines|_e\to\lines|_{g'e}:\l\mapsto g'\l$.
Push forward the numbering of $\lines|_e$ to $\lines|_{g'e}$, and
consider the partition of $1,\dots, k$ according to
which component of
$\Wh(\phi([1,\infty]))\wrel \phi([0,\infty])$ 
each line of $\lines|_{g'e}$ belongs to.
So $g'$ gives a new partition of $1,\dots,k$.

There are at least $y+1$ such elements $g'$, but at most $y$ distinct
partitions of the numbers $1,\dots,k$, so there exist $g_1$ and $g_2$ such that the oriented
edges $g_1e$ and $g_2e$ are edges of $\phi([1,\infty])$ (say, with $g_2e$
between $g_1e$ and $\phi(\infty)$) and for each line
$\l\in\lines$ containing $g_1e$, the corresponding line $g_2\inv{g}_1\l$
containing $g_2e$ is in the same component of $\Wh(\phi([1,\infty]))\wrel \phi([0,\infty])$.

The desired elements are $h=g_1g_0$ and $g=g_2\inv{g_1}$.
The word length of $\inv{h}gh$ is the distance from $g_1e$ to $g_2e$,
which is at most $z$.
\end{proof}

\begin{corollary}\label{corollary:approximatinggiscut}
  With notation as above,
  $\decomp\setminus\qmap(\{\inv{g}^\infty,g^\infty\})$ has
  at least as many components as
  $\Wh(\phi([1,\infty]))\wrel \phi([0,\infty])$.
\end{corollary}

\begin{corollary}\label{corollary:no3componentcutpairsimpliesno3componentrays}
  If $\decomp$ does not have any cut points or cut pairs with more
  than two complementary components, then for every simplicial
  geodesic $\phi\from [0,\infty]\to\closure{\tree}$ there are at
  most two components of $\Wh(\phi([1,\infty]))\wrel \phi([0,\infty])$.
\end{corollary}
 
\begin{lemma}\label{lemma:lookslikeacutpair}
Let $\phi\from[-\infty,\infty]\to\closure{\tree}$ be a simplicial
geodesic such that $\qmap(\phi(\infty))$ and
$\qmap(\phi(-\infty))$ are distinct points and neither is a cut point.
If, for some increasing sequence $(r_i)$ of non-negative integers, there
are at least two components of every $\Wh(\phi([-r_i,r_i]))\wrel\phi([-r_i-1,r_i+1])$,
then $\qmap(\{\phi(-\infty),\phi(\infty)\})$ is a cut
pair.
\end{lemma}
\begin{proof}
$\Wh(\phi([-\infty,\infty]))$ has at least two components.
Let $\hull$ be the convex hull of
$\qmap^{-1}(\qmap(\{\phi(-\infty),\phi(\infty)\}))$.
By \fullref{lemma:connectedhull} we must show $\Wh(\hull)$ has at
least two components.
We are done if $\hull=\phi([-\infty,\infty])$.
Otherwise, we obtain $\Wh(\hull)$ from $\Wh(\phi([-\infty,\infty]))$
by deleting at most two vertices and splicing on connected graphs at
the deleted vertices.
Thus, $\Wh(\hull)$ has the same number of components as $\Wh(\phi([-\infty,\infty]))$.
\end{proof}

\begin{lemma}\label{lemma:leavesofcomponents}
Let $\hull$ be the convex hull of a subset of $\bdry\tree$, and let $\Y$ be a connected subset of $\hull$. 
Let $\wc$ be a component of $\Wh(\Y)\wrel\hull$,
and let $\X=\Y\cap \bigcup_{\we\in\wc}\line_\we$.
\begin{enumerate}
\item For every leaf $v$ of $\X$ there exists an edge $\we\in\wc$ with
  $v\in\line_\we$ and an edge $e\in\hull\setminus\Y$ incident to $v$ such that $\we$ has a loose end at $\gr_\Y(e)$.\label{item:leafhaslooseend}
\item The sum of the number of distinct ends and loose ends of $\wc$
  is at least two. \label{item:twodeletedvertices}
\end{enumerate}
\end{lemma}
\begin{proof}

  Let $v$ be a leaf of $\X$.
There is at most one edge $e_0$ of $\X$ incident
  to $v$.

Let $\we$ be an edge of $\wc$ such that $v\in\line_\we$.
Let $e_1\neq e_0$ be an edge of $\line_\we$ incident to $v$.
Since $v$ is a leaf of $\X$, $e_1$ is
not an edge of $\Y$, so there exists a vertex $\gr_\Y(e_1)\in\Wh(\Y)$.
If $e_1\in\hull\setminus\Y$ then we are done: $\we$ has a loose end at $\gr_\Y(e_1)$.
If there are no such edges then, since $\Wh(v)$ is connected without
cut vertices, it would mean that $v$ is a leaf of $\hull$, but $\hull$
has no leaves.
This proves
(\ref{item:leafhaslooseend}).

(\ref{item:twodeletedvertices}) follows directly from
(\ref{item:leafhaslooseend}) unless $\X$ is a single vertex.
If $\X=v$ then repeat the above argument with $e_0=e_1$ to find a
second edge $e_2\neq e_1$ such that $\wc$ also has an edge with a loose end
at $\gr_\Y(e_2)$.
\end{proof}

\begin{lemma}\label{lemma:twocomponentsplit}
  Let $S\subset\decomp$ be a closed minimal cut set that is not a cut point and contains a point $x$
  such that $|\qmap^{-1}(x)|=2$. For $\epsilon\in\pm$, let
  $\psi_x^\epsilon\from [0,\infty]\to \closure{\tree}$ be a simplicial
  geodesic ray such that $\psi_x^\epsilon([1,\infty])$ contains no branch
  point of the convex hull $\hull$ of $\qmap^{-1}(S)$ and such that
  $\{\psi_x^-(\infty),\psi_x^+(\infty)\}=\qmap^{-1}(x)$. 
Then for $\epsilon\in\pm$ there is a unique component
$\wc_\epsilon\subset\Wh(\hull)$ such that
$\psi_x^\epsilon([1,\infty])\cap
\bigcup_{\we\in\wc_\epsilon}\line_\we\neq\emptyset$, and these are the
only two components of $\Wh(\hull)$. Hence, $S$ has exactly two
complementary components.
\end{lemma}
\begin{proof}
By \fullref{lemma:niceneighborhood2}, each 
$\Wh(\psi_x^\epsilon([1,\infty]))\wrel\phi([0,\infty])$ is connected.
Since $\psi_x^\epsilon([1,\infty])$ contains no branch
  point of $\hull$, it follows that
  $\Wh(\psi_x^\epsilon([1,\infty]))\wrel\phi([0,\infty])=\Wh(\psi_x^\epsilon([1,\infty]))\wrel\hull$ includes into $\Wh(\hull)$ via splicing. 
Therefore, there is a single component $\wc_\epsilon\subset\Wh(\hull)$
containing the image of
$\Wh(\psi_x^\epsilon([1,\infty]))\wrel\phi([0,\infty])$.

If $C$ is a component of $\decomp\setminus S$ then
by \fullref{lemma:ends} the corresponding 
component $\wc$ of $\Wh(\hull)$ has an end at
$\psi_x^\epsilon(\infty)$ for at least one $\epsilon$, so $\wc$ is either $\wc_-$ or
$\wc_+$.
\end{proof}

\begin{lemma}\label{lemma:partialcutpairhull}
Let $\{x,y\}$ be a cut pair in $\decomp$.
Let $\hull$ be the convex hull of
$\qmap^{-1}(\{x,y\})$.
Let $\phi\from[0,l]\to\hull$ be a simplicial geodesic of finite
length $l\geq 2$.
Then $\Wh(\phi([1,l-1]))\wrel\phi([0,l])$ has at least two components.
\end{lemma}
\begin{proof}
If $\qmap^{-1}(\{x,y\})$ is two points then $\phi$ can be extended to
be a simplicial geodesic $\phi\from [-\infty,\infty]\onto \hull$.
It is an easy consequence of \fullref{lemma:nonbacktracking} that if 
$\Wh(\phi([1,l-1]))\wrel\phi([0,l])$ is connected then so is
$\Wh(\hull)$, so suppose $|\qmap^{-1}(\{x,y\})|>2$.

Let $\X=\phi([1,l-1])$ and $\Y=\phi([0,l])$.

For $z\in\{x,y\}$, if $|\qmap^{-1}(z)|=2$ then for $\epsilon\in\pm$ let $\psi_z^\epsilon\from [0,\infty]\to
\closure{\tree}$ be the simplicial geodesic
rays such that $\{\psi_z^-(\infty),\psi_z^+(\infty)\}=\qmap^{-1}(z)$
and such that $\psi_z^\epsilon(0)$ is the only branch point of
$\hull$ in the image of $\psi_z^\epsilon$.
Let $\line_z\in\lines$ be the line such that $\{\line_z^-,\line_z^+\}=\qmap^{-1}(z)$.

We make three preliminary claims:
\begin{claim}\label{claim:nodoublelooseend}
Let $\mathcal{Z}\subset\hull$ be connected. Let $z\in\{x,y\}$ with $|\qmap^{-1}(z)|=2$.
Then $\gr_{\mathcal{Z}}(\line_z)$ is the only possible component of
$\Wh(\mathcal{Z})\wrel\hull$ that, for
both $\epsilon\in\pm$,
contains edges $\we_\epsilon$ with
$\line_{\we_\epsilon}\cap\psi_z^\epsilon([1,\infty])\neq\emptyset$.
\end{claim}
\begin{claimproof}
Every component of $\Wh(\mathcal{Z})\wrel\hull$ that is not
equal to $\gr_{\mathcal{Z}}(\line_x)$ or $\gr_{\mathcal{Z}}(\line_y)$
includes into a component of $\Wh(\hull)$ via splicing. 
The claim follows from \fullref{lemma:twocomponentsplit}.
\end{claimproof}
\begin{claim}\label{claim:notribplelooseend}
  Let $\mathcal{Z}\subset\hull$ be connected. 
No component of $\Wh(\mathcal{Z})\wrel\hull$ has loose ends
  at three distinct deleted vertices. 
\end{claim}
\begin{claimproof}
If $\mathcal{Z}$ does not contain a branch point of $\hull$ then the claim is
trivial because  $\Wh(\mathcal{Z})\wrel\hull$ has at most two deleted
vertices.

If there is only one branch point, or if there are two and
$\mathcal{Z}$ contains both, or if
$\mathcal{Z}$ is disjoint from one of $\line_x$ or $\line_y$, then the claim follows from \fullref{claim:nodoublelooseend}.

The remaining possibility is
that $\line_x\cap\line_y$ contains an edge and $\mathcal{Z}$ contains
exactly one branch point of $\hull$.
Then $\Wh(\mathcal{Z})\wrel\hull$ has at most three deleted vertices,
which, without loss of generality, we may assume are
$\gr_{\mathcal{Z}}(\psi_x^+(\infty))$,
$\gr_{\mathcal{Z}}(\psi_y^+(\infty))$, and $\gr_{\mathcal{Z}}(\psi_x^-(\infty))=\gr_{\mathcal{Z}}(\psi_y^-(\infty))$.

Suppose $\wc$ is a component with loose
ends at all three deleted vertices. 
Let $\mathcal{Z}'=\mathcal{Z}\cup(\line_x\cap\line_y)$.
Consider the component $\wc'$ of
$\Wh(\mathcal{Z}')\wrel\hull$ containing
the image of $\wc$ under splicing $\Wh(\mathcal{Z})\wrel\hull$ to
$\Wh((\line_x\cap\line_y)\setminus\mathcal{Z})\wrel\hull$.
Since $\wc$ had loose ends at $\gr_{\mathcal{Z}}(\psi_x^+(\infty))$ and
$\gr_{\mathcal{Z}}(\psi_y^+(\infty))$, so does $\wc'$.
Since $\wc$ had a loose end at
$\gr_{\mathcal{Z}}(\psi_x^-(\infty))=\gr_{\mathcal{Z}}(\psi_y^-(\infty))$,
the set $\bigcup_{\we\in\wc'}\line_\we$ contains an edge in $\line_x\cap\line_y$.
Thus, $\mathcal{Z}'\cap \bigcup_{\we\in\wc'}\line_\we$ contains a leaf
$v\in\line_x\cap\line_y$ with $v\neq\psi_x^+(0)=\psi_y^+(0)$.
By \fullref{lemma:leavesofcomponents}, we conclude that
$v=\psi_x^-(0)=\psi_y^-(0)$ and that $\wc'$ has a loose end at either $\gr_{\mathcal{Z}'}(\psi_x^-(1))$ or $\gr_{\mathcal{Z}'}(\psi_y^-(1))$.
In either case this contradicts \fullref{claim:nodoublelooseend}.
\end{claimproof}

\begin{claim}\label{claim:componentpartition}
  Let $\mathcal{Z}\subset\hull$ be a bounded, connected set such that for
  $z\in\{x,y\}$ and $\epsilon\in\pm$ we have four distinct vertices
  $\gr_{\mathcal{Z}}(\psi_z^\epsilon(\infty))$.
For $\epsilon,\epsilon'\in\pm$, let $P^{\epsilon,\epsilon'}$ be the
set of components of $\Wh(\mathcal{Z})\wrel\hull$ that contain an
edge with a loose end at $\gr_{\mathcal{Z}}(\psi_x^\epsilon(\infty))$
and an edge with a loose end at
$\gr_{\mathcal{Z}}(\psi_y^{\epsilon'}(\infty))$.
Then one of the following is true:
\begin{itemize}
\item $P^{+,+}$ and $P^{-,-}$ are non-empty and $P^{+,-}$ and
  $P^{-,+}$ are empty.
\item $P^{+,-}$ and $P^{-,+}$ are non-empty and $P^{+,+}$ and
  $P^{-,-}$ are empty.
\end{itemize}
\end{claim}
\begin{claimproof}
\fullref{lemma:leavesofcomponents} and \fullref{claim:nodoublelooseend} imply that every component
except $\gr_{\mathcal{Z}}(\line_x)$ and $\gr_{\mathcal{Z}}(\line_y)$ belongs to one of the $P^{\epsilon,\epsilon'}$\!.
\fullref{claim:notribplelooseend} implies that no component belongs to
more than one of the $P^{\epsilon,\epsilon'}$\!.
If fewer than two of the $P^{\epsilon,\epsilon'}$ are non-empty then
$\Wh(\mathcal{Z})$ is not connected without cut vertices, which we
know it is.

\fullref{lemma:niceneighborhood2} implies that each
  $\Wh(\psi_z^\epsilon([1,\infty]))\wrel\psi_z^\epsilon([0,\infty])$
  is connected.
Therefore, by splicing, we see that all of the components in $P^{\epsilon_1,\epsilon_2}$ and
$P^{\epsilon_3,\epsilon_4}$ include into a common component of
$\Wh(\hull)$ if $\epsilon_1=\epsilon_3$ or $\epsilon_2=\epsilon_4$.
$\Wh(\hull)$ has at least two components, so the claim follows.
\end{claimproof}

We now proceed with the proof of the lemma.

If $\Y\subset\line_z$ for $z\in\{x,y\}$ then
$\Wh(\X)\wrel\Y$ has one component that
is the single edge $\gr_{\X}(\line_z)$ with two loose ends, and at
least one other component containing vertices, so at least two
components. 

Otherwise, if $\X$ does not contain a branch point of
$\hull$ then $\X$ separates $\qmap^{-1}(x)$ from
$\qmap^{-1}(y)$ and 
$\Wh(\X)\wrel\hull=\Wh(\X)\wrel\Y$.
If this is connected then
at most one component of $\Wh(\hull)$ has ends in $\qmap^{-1}(x)$ and $\qmap^{-1}(y)$, contradicting \fullref{lemma:ends}.

If $\Wh(\X)\wrel\hull$ has four deleted vertices then
partition the components into parts
$\gr_{\X}(\line_x)$, $\gr_{\X}(\line_y)$, and
the non-empty $P^{\epsilon,\epsilon'}$ of
\fullref{claim:componentpartition}. 
Assume, without loss of generality, that $P^{+,+}$ and $P^{-,-}$ are
non-empty. 
$\Wh(\X)\wrel\Y$ is obtained from
$\Wh(\X)\wrel\hull$ by un-deleting two vertices
$\gr_{\X}(\psi_x^\epsilon(\infty))$ and
$\gr_{\X}(\psi_y^{\epsilon'}(\infty))$.

If $\epsilon'=-\epsilon$ then the two components of
$\Wh(\X)\wrel\Y$ are:
\[\gr_{\X}(\psi_x^\epsilon(\infty))\cup\gr_{\X}(\line_x)\cup
\bigcup_{\wc\in P^{\epsilon,\epsilon}}\wc\]
and
\[\gr_{\X}(\psi_y^{-\epsilon}(\infty))\cup\gr_{\X}(\line_y)\cup
\bigcup_{\wc\in P^{-\epsilon,-\epsilon}}\wc\]

If $\epsilon'=\epsilon$ then the components of
$P^{-\epsilon,-\epsilon}$ remain separate components in
$\Wh(\X)\wrel\Y$, distinct from the component
containing the un-deleted vertices.

If $\Wh(\X)\wrel\hull$ has three deleted
vertices then partition the components of
$\Wh(\X)\wrel\hull$ into three parts according to their two loose ends.
To get $\Wh(\X)\wrel\Y$ we un-delete one
vertex, which combines two of the parts into a single 
component but leaves the other part alone.
\end{proof}

\section{Splittings}\label{sec:splitting}
Armed with the machinery of
\fullref{sec:topologyofdecompositionspace}, we are now prepared to
construct the relative JSJ decomposition.

Otal \cite{Ota92} makes the following observation:
\begin{lemma}\label{lemma:splitimpliescut}
If $\F$ splits over $\left<g\right>$ relative to $\multimot$ then
$\qmap(\{\inv{g}^\infty,g^\infty\})$ is a cut point or cut pair in $\decomp$.
\end{lemma}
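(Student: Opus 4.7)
The plan is to construct an explicit disconnection of $\decomp\setminus \qmap(\{g^\infty,\inv{g}^\infty\})$ by pushing forward the partition of the Bass-Serre tree induced by the edge corresponding to $\langle g\rangle$.

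First I would set up the Bass-Serre tree $T$ of the given splitting, so that $F\act T$ with cyclic edge stabilizers and each word of $\multimot$ conjugate into a vertex stabilizer. Fix an edge $e$ of $T$ with stabilizer $\langle g\rangle$; removing $e$ separates $T$ into two half-trees $T_1$ and $T_2$. Fixing a basepoint vertex $v_0 \in T$, the orbit map $f\mapsto f\cdot v_0$ together with the separation of $T$ by $e$ produces a tripartition $\bdry\tree = U_1\sqcup U_2\sqcup \{g^\infty,\inv{g}^\infty\}$: a boundary point $\xi$ lies in $U_i$ precisely when, along any sequence $f_j\to\xi$ in $\tree$, the points $f_j\cdot v_0$ eventually enter and stay in $T_i$. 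The excluded points are exactly the two endpoints of the axis of $g$. Both $U_1$ and $U_2$ are open and nonempty, since each vertex group acts nontrivially on its side of $e$.

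Next I would show that every line $l\in\lines$ has both endpoints in the same $U_i$, so that the quotient map $\qmap$ respects the partition. By definition $l$ is the axis of some $hwh^{-1}$ with $w\in\multimot$, and since the splitting is relative to $\multimot$ the element $w$ lies in a vertex group; hence $hwh^{-1}$ acts \emph{elliptically} on $T$, fixing a nonempty subtree. Either this subtree contains $e$ -- in which case $hwh^{-1}\in\langle g\rangle$ and $l=[\inv{g}^\infty,g^\infty]$, so $l^{\pm}$ already lie in the excluded set -- or the subtree is disjoint from the interior of $e$ and lies strictly on one side. In the latter case, orbits of $hwh^{-1}$ in $\tree$ project to a bounded region of $T$ on one specific side of $e$, so $l^+$ and $l^-$ both lie in the same $U_i$. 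Therefore $\qmap(U_1)$ and $\qmap(U_2)$ are disjoint, nonempty, open, and together cover the complement of $\qmap(\{g^\infty,\inv{g}^\infty\})$ in $\decomp$, showing this image is a cut set.

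Finally, I would distinguish the two cases. If the maximal cyclic subgroup containing $g$ is generated by an element of $\multimot$, then the corresponding line of $\lines$ already identifies $g^{\infty}$ with $\inv{g}^{\infty}$ in $\decomp$, so the cut set is a single point. Otherwise, $\qmap(\{g^\infty,\inv{g}^\infty\})$ is two distinct points; by \fullref{lemma:cutpointsareinmultiword} neither point can itself be a cut point, since the stabilizer of each is not generated by an element of $\multimot$, so the two-point set is a minimal cut set and hence a cut pair in the paper's sense. The most delicate step will be the rigorous construction of the partition $U_1\sqcup U_2$: although the orbit map $F\to T$ is not a quasi-isometry, standard Bass-Serre arguments tracking the normal form of a sequence converging in $\bdry\tree$ should show that the tripartition is well-defined, $F$-equivariant, and gives open pieces in $\bdry\tree$.
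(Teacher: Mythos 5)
The paper does not actually prove this lemma; it appears as a bare statement attributed to Otal \cite{Ota92}, so there is no internal proof to compare your proposal against. Your strategy---separate the Bass--Serre tree $T$ at the edge $e$ stabilized by $\left<g\right>$, transfer that separation to $\bdry\tree$, and check that lines of $\lines$ respect it---is the natural one and is surely in the spirit of Otal's argument. However, the step you yourself flag as ``the most delicate'' is where essentially all of the content lies, and as written it does not go through.

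The problem is that the orbit map $f\mapsto f\cdot v_0$ from $F$ to $T$ is very far from a quasi-isometry, and the criterion ``along any sequence $f_j\to\xi$ the points $f_j\cdot v_0$ eventually enter and stay in $T_i$'' is not obviously satisfied for either $i$. Concretely, take $\xi=hw^{+\infty}$ with $w\in\multimot$; then $hwh^{-1}$ is elliptic in $T$, fixing a subtree $S$, and the natural sequence $f_j=hw^j$ has $f_j\cdot v_0=(hwh^{-1})^j\cdot hv_0$, which stays in the ball of radius $d_T(hv_0,S)$ about $S$. If $S$ lies near $e$ (for instance, if $S$ contains an endpoint of $e$) this ball can straddle $e$, and nothing forces the sequence to stabilize on one side. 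The same issue undermines your later assertion that the orbits of $hwh^{-1}$ ``project to a bounded region of $T$ on one specific side of $e$'': boundedness is correct, being on one specific side is not automatic. You also assert openness of $U_1$ and $U_2$ without argument; that too would need work even if the tripartition were well-defined.

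A more robust route, consistent with the machinery the paper develops, is to work at the level of the Whitehead graph rather than two-coloring boundary points via orbits of $v_0$. By \fullref{lemma:connectedhull} it suffices to disconnect $\Wh([\inv{g}^\infty,g^\infty])$, i.e.\ to two-color the branches of $\tree$ hanging off the axis $L$ of $g$ so that no line of $\lines\setminus\{L\}$ joins branches of different colors. That coloring can be extracted either from the normal form for the amalgam or HNN extension, or from an $F$-equivariant simplicial map $\rho\from\tree\to T$ together with the Dunwoody-type track $\rho^{-1}(m)$, $m$ the midpoint of $e$; the latter gives a genuine $\left<g\right>$-invariant separation of $\tree$ compatible with $L$ and with lines. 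Your final paragraph is fine: once one has a two-element cut set $\qmap(\{\inv{g}^\infty,g^\infty\})$, using \fullref{lemma:cutpointsareinmultiword} to see that neither point is a cut point when $g\notin\multimot$ correctly yields a cut pair in the paper's sense, and when $g\in\multimot$ the image is a single point, hence a cut point. You also implicitly (and correctly) use nontriviality of the splitting to get both sides nonempty; that hypothesis should be made explicit. But the construction of the separation itself needs to be redone.
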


Since a rigid decomposition space has no cut points or cut pairs:
\begin{corollary}\label{corollary:rigiddontsplit}
If $(\F,\multimot)$ is rigid there are no cyclic splittings of $\F$
relative to $\multimot$.
\end{corollary}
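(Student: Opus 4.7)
The plan is to argue by contradiction, using the two-clause definition of rigidity (the decomposition space $\decompw$ is connected and has no cut points or cut pairs) together with the results cited just before the corollary.

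Assume $F$ admits some nontrivial splitting relative to $\multimot$. Any such splitting is either a free splitting or has cyclic edge group (the only edge groups considered in the paper are trivial or cyclic). I would handle these two cases separately.

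In the free splitting case, I would invoke the basic observation, recorded in the introduction and following from \fullref{proposition:Whitehead} together with \fullref{corollary:connected}, that $F$ splits freely rel $\multimot$ if and only if $\decompw$ is disconnected. Since rigidity requires $\decompw$ to be connected, this case is ruled out immediately.

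In the cyclic splitting case, say $F$ splits over $\langle g\rangle$ rel $\multimot$. Then \fullref{lemma:splitimpliescut} (Otal's observation, stated just above the corollary) gives that $\qmap(\{\inv{g}^\infty,g^\infty\})$ is a cut point or cut pair of $\decompw$. This contradicts rigidity, which forbids both.

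There is essentially no obstacle here, since the hard work has already been done in \fullref{lemma:splitimpliescut} and in the Whitehead-graph characterization of free splittings; the corollary is a straightforward packaging of those two inputs against the definition of rigidity.
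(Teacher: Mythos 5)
Your argument is correct and matches the paper's: the corollary follows immediately from \fullref{lemma:splitimpliescut} once rigidity rules out cut points and cut pairs, and your handling of the free splitting case via connectedness of $\decompw$ is subsumed by the paper's standing assumption (no free splitting rel $\multimot$) but is a reasonable inclusion. This is the same packaging the paper intends, merely spelled out in slightly more detail.
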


We will prove a converse in \fullref{theorem:splitting}.
This takes care of one case for which the \rjsj is trivial.
We saw another trivial case in \fullref{ex:first}, in which the
decomposition space is a circle. We explore this case in
\fullref{sec:crossed}.
In particular, a circle has no cut points or uncrossed cut pairs.

In the language of Guirardel and Levitt \cite{GuiLev09}, a 
subgroup of $\F$ is \emph{universally elliptic rel $\multimot$} if it is elliptic in
every cyclic splitting of $\F$ rel $\multimot$.
A graph of groups decomposition is a JSJ decomposition if all the
splittings are over universally elliptic subgroups and the
decomposition is maximal with respect to this property.
Our goal is to show that cut points and uncrossed cut
pairs correspond to universally elliptic relative cyclic splittings.
The first step is to show that the stabilizer of a cut point or
uncrossed cut pair is an infinite cyclic group over which $\F$ splits
rel $\multimot$.
For cut points this was already noted by Otal.
We show that uncrossed cut pairs have infinite cyclic stabilizers in 
\fullref{sec:uncrossed}, and in \fullref{sec:otalsplit} we construct a simplicial tree with a
cocompact $\F$--action whose edge stabilizers are the stabilizers of
cut points and uncrossed cut pairs of $\decomp$.

In \fullref{sec:refine} we prove the stabilizers of cut points and
uncrossed cut pairs of $\decomp$ are exactly the maximal cyclic universally
elliptic subgroups of $\F$ rel $\multimot$ over which $\F$ splits rel $\multimot$.
In \fullref{sec:proofofdecomp} we conclude
that the splitting we have constructed is the \rjsj.

\subsection{Crossing Pairs and the Circle}\label{sec:crossed}
In this subsection we give criteria for the
decomposition space to be a circle.

\begin{lemma}[{\cite[Theorem 2]{Ota92}, \cite[Theorem
    6.1]{CasMac11}}]\label{lemma:circles} 
  The following are equivalent:
\begin{enumerate}
\item $(\F,\multimot)$ is a QH--surface.
\item Some Whitehead graph for $\multimot$ is a circle.
\item Every Whitehead graph for $\multimot$ with no cut vertex is a circle.
\item $\decomp$ is a circle. 
\item Every minimal cut set of $\decomp$ is a cut pair.
\end{enumerate}
\end{lemma}

Proofs of the following two lemmas are elementary and are left to the reader.
 \begin{lemma}[Cut Pair Exchange]\label{lemma:3of4}
   Suppose $\{x_0,x_1\}$ and $\{y_0,y_1\}$ are crossing cut pairs in
   $\decomp$. 
Then $\{x_0,y_0\}$ is a cut pair.
 \end{lemma}

 \begin{lemma}\label{lemma:transitivecutpairs}
   Suppose $\{x,y\}$ and $\{y,z\}$ are cut pairs of $\decomp$ and for
   every neighborhood $N'$ of $y$ there exists a connected
   neighborhood $N\subset N'$ of $y$ such that $N\setminus\{y\}$ has
   exactly two components. Then $\{x,z\}$ is a cut pair.
 \end{lemma}

The following proposition is a generalization of a construction of Bowditch for boundaries
of hyperbolic groups \cite{Bow98b}.

 \begin{proposition}\label{proposition:circle}
$\decomp$ is a circle if and only if all of the
 following are satisfied:
 \begin{enumerate}
 \item $\decomp$ is connected.
\item $\decomp$ has no cut points.
\item $\decomp$ has cut pairs.
\item  Every cut pair in $\decomp$ is crossed by a cut pair.
\end{enumerate}
\end{proposition}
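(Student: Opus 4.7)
The forward implication is immediate: on $S^1$ every two distinct points form a minimal cut set, every such cut pair is crossed by any other whose points lie in opposite open arcs, and $S^1$ is connected with no cut points.

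For the converse, the engine is \fullref{lemma:3of4}: two crossing cut pairs $\{x_0,x_1\}$ and $\{y_0,y_1\}$ yield four further cut pairs $\{x_i,y_j\}$, and the lemma, applied also to these new pairs, forces the four points $x_0,y_0,x_1,y_1$ into a cyclic arrangement in which the four consecutive pairs and the two diagonals are exactly the six cut pairs just listed. Starting from any crossing pair, which exists by hypotheses~(3) and~(4), I would refine recursively: for each newly produced side cut pair $\{p,q\}$, apply hypothesis~(4) to obtain a crossing cut pair and use \fullref{lemma:3of4} to splice its two points into the cyclic order strictly between $p$ and $q$. This builds a countable set $S\subset\decomp$ whose inherited cyclic order has the order type of $\mathbb{Q}/\mathbb{Z}$, together with, for each pair $p,q$ consecutive at stage $n$, a distinguished component $U_{p,q}^n$ of $\decomp\setminus\{p,q\}$ which contains every point later inserted between $p$ and $q$.

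To conclude, I would fix an order-preserving bijection $f\from S\to\mathbb{Q}/\mathbb{Z}$ and extend it to $\decomp\to S^1$ by sending $x\notin S$ to the unique cut of $\mathbb{Q}/\mathbb{Z}$ determined by the nested system of arcs $U_{p,q}^n$ containing $x$. Continuity and injectivity reduce to two claims: $S$ is dense in $\decomp$, and the maximum diameter of the arcs $U_{p,q}^n$ tends to zero with $n$. Surjectivity then follows from connectedness of $\decomp$, since the image is closed, connected, and contains a dense subset of $S^1$.

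The main obstacle is the density and diameter-shrinking statement, since in principle the refinement could accumulate in one pocket of $\decomp$ and bypass an open set $V$. I would rule this out by extracting from a failing sequence a nondegenerate subcontinuum $K\subset\decomp$ contained in $U_{p_n,q_n}^n$ for a nested sequence of arcs but disjoint from $S$; hypothesis~(2) prevents $K$ from collapsing relative to the limiting boundary cut pair, and hypothesis~(4) applied to that boundary pair together with one more application of \fullref{lemma:3of4} produces a crossing cut pair with points inside $K$, contradicting that $K$ was bypassed. Once density and diameter-shrinking are established, compactness of $\decomp$ upgrades the continuous bijection to a homeomorphism.
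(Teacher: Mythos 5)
Your overall strategy --- building a countable cyclically ordered set by repeated crossing and extending to a homeomorphism onto $S^1$ --- is a genuinely different route from the paper, which instead shows that \emph{every} pair of distinct points of $\decomp$ is a cut pair (via an equivalence relation $x\sim y$ iff $x=y$ or $\{x,y\}$ is a cut pair, shown to be transitive by \fullref{lemma:3of4}, to have closed classes, and to have a single class) and then simply invokes \fullref{lemma:circles}, which already identifies ``every minimal cut set has size two'' with ``$\decomp$ is a circle''. But as written your argument has genuine gaps, and they sit exactly where the real content of the proposition lies. First, a local slip: if $\{r,s\}$ crosses $\{p,q\}$ then by definition $r$ and $s$ lie in \emph{different} components of $\decomp\setminus\{p,q\}$, so you cannot splice both new points ``strictly between $p$ and $q$'' inside the distinguished arc $U^n_{p,q}$; only one of them lands there (the construction can be repaired using \fullref{lemma:3of4} to see that $\{p,r\}$ and $\{r,q\}$ are cut pairs, but the bookkeeping you describe is not correct as stated).

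The serious gap is the density/diameter-shrinking step, which you acknowledge is the main obstacle but do not actually prove. Hypothesis (4) gives you \emph{some} crossing pair of each side pair $\{p,q\}$, with no control over where its points land, so nothing prevents the refinement from accumulating and missing an open set; your proposed rescue has two unjustified steps. You assume the limiting boundary pair of the nested arcs is again a cut pair, but a limit of cut pairs need not be a cut set in a general continuum --- the paper needs its Whitehead-graph machinery (the limiting argument in the ``equivalence classes are closed'' claim, resting on \fullref{lemma:connectedhull} and the standing no-cut-vertex assumption) precisely to get this, and your purely topological sketch supplies no substitute. You then assert that a crossing pair of that limiting pair can be found ``with points inside $K$''; hypothesis (4) only places one point in each complementary component of the limiting pair, not inside the particular subcontinuum $K$ you extracted, so the contradiction does not follow. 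These two steps are essentially equivalent to the paper's Claims that classes are closed and that there is a single class (i.e.\ that every point of $\decomp$ lies in a cut pair with every other point), so the proposal in effect postpones, rather than proves, the heart of the converse direction. If you do establish that every pair of points is a cut pair, note that you could then either cite \fullref{lemma:circles} as the paper does, or finish with the classical characterization of the simple closed curve as the metric continuum separated by each of its point pairs; either is shorter than completing the direct $\mathbb{Q}/\mathbb{Z}$-order construction.
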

\begin{proof}
A circle satisfies these conditions.
We prove the converse.

Define an equivalence relation on $\decomp$ by $x\sim y$ if $x=y$
  or if $\{x,y\}$ is a cut pair.
Transitivity follows from \fullref{claim:2componentneighborhood} and \fullref{lemma:transitivecutpairs}.
\begin{claim}\label{claim:2componentneighborhood}
For every cut pair
$\{x_0,x_1\}$ and every neighborhood $N'$ of $x_0$ there exists a
connected neighborhood $N\subset N'$ of $x_0$ such that $N\setminus \{x_0\}$
has precisely two components.
\end{claim}
\begin{claimproof}
  Every cut pair is crossed, so by \fullref{lemma:precisely2components}
every cut pair has precisely two complementary components.
By
\fullref{corollary:no3componentcutpairsimpliesno3componentrays},
for every simplicial geodesic $\phi\from [0,\infty]\to\closure{\tree}$
there are at most two components of
$\Wh(\phi([1,\infty]))\wrel\phi([0,\infty])$.
From this and the fact that there are no cut points, 
\fullref{lemma:niceneighborhood} gives the desired neighborhood.
\end{claimproof}

\begin{claim}\label{claim:equivalenceclassesclosed}
 Equivalence classes are closed. 
\end{claim}
\begin{claimproof}
If $[x]$ is a single point we are done. Otherwise, suppose
$(y_i)\to y$ for $x\neq y_i\in [x]$. For some
$\eta\in\qmap^{-1}(y)$ there exists a subsequence of $(y_i)$ and a
choice of $\eta_i\in \qmap^{-1}(y_i)$ so that $(\eta_i)\to\eta$ in
$\bdry\tree$.
Choose $\xi\in\qmap^{-1}(x)$ and
let $\phi\from [-\infty,\infty]\onto[\xi,\eta]$ be a simplicial
geodesic.
Passing to a further subsequence of $(\eta_i)$, there are
positive integers $r_i$ such that $r_{i+1}>r_i+1$ and
$[\xi,\eta_i]\cap[\xi,\eta]=\phi([-\infty,r_i+1])$.
Since $x\neq y_i\in[x]$, each $\{x,y_i\}$ is a cut pair, so there are at least two
components of $\Wh(\phi([-r_i,r_i]))\wrel\phi([-r_i-1,r_i+1])$, by
\fullref{lemma:partialcutpairhull}.
By \fullref{lemma:lookslikeacutpair},
$\qmap(\{\phi(-\infty),\phi(\infty)\})=\{x,y\}$ is a cut pair, so
$y\in [x]$.
\end{claimproof}

\begin{claim}
All of $\decomp$ is in one equivalence class.
\end{claim}
Given the claim, every point of $\decomp$ is a member of a cut
pair, and it follows from
\fullref{lemma:circles} that $\decomp$ is a circle.

\begin{claimproof}
We have assumed that a cut pair exists, so there is an equivalence
class $[x]$ consisting of more than one point.
Suppose that $[x]$ is not all of $\decomp$. 

Let $U$ be a component of $\decomp\setminus[x]$.
Since $\decomp$ is locally connected by \fullref{corollary:connected},
and since $[x]$ is closed by \fullref{claim:equivalenceclassesclosed}, $U$ is open
in $\decomp$. 
Since $\decomp$ is connected without cut points, $U$ has at least two
limit points in $[x]$.
Pick distinct points $y_0$ and $y_1$ in $\closure{U}\cap [x]$.
Since they are in $[x]$, these points are  a cut pair, and $\decomp\setminus\{y_0,y_1\}$ has exactly two
components, $A_0$ and $A_1$.
Assume $U\subset A_0$.

Let $\{z_0,z_1\}$ be a cut pair crossing $\{y_0,y_1\}$ with
complementary components $B_0$ and $B_1$. 
Assume $z_0\in A_0$, $z_1\in A_1$, $y_0\in B_0$ and $y_1\in B_1$.

By \fullref{lemma:3of4}, $z_0$ and $z_1$ are in $[x]\subset
\decomp\setminus U$.
Thus, $U$ is contained in $B_\e$,
where $\e$ is either 0 or 1.
Since $U\subset A_0$, we have $U\subset A_0\cap B_\e$.

Now, $\{y_\e,z_0\}$ is a cut pair whose 
components are $C_0=A_0\cap B_\e$ and $C_1=A_1\cup
B_{1-\e}\cup\{y_{1-\e}\}\cup\{z_1\}$.
However, $U$, and hence $C_0$, has $y_{1-\e}\in C_1$ as a limit point,
which is a contradiction. Thus, $[x]=\decomp$.
\end{claimproof}
\end{proof}

\begin{corollary}\label{corollary:uncrossedexist}
  If $\decomp$ is not rigid and not a circle then
  $\decomp$ contains cut points or uncrossed cut pairs.
\end{corollary}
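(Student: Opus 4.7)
The plan is to deduce this immediately from \fullref{proposition:circle} by contraposition, together with the definition of rigidity.

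First I would unpack the hypotheses. Rigidity of $(F,\multimot)$ was defined to mean that $\decomp$ is connected with no cut points and no cut pairs. So if $\decomp$ is connected but not rigid, then $\decomp$ must contain at least one cut point or at least one cut pair.

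Next I would split into cases based on whether $\decomp$ has a cut point. If $\decomp$ has a cut point, we are done immediately. Otherwise, $\decomp$ is connected with no cut points, and by the previous paragraph it must contain a cut pair. At this point three of the four conditions of \fullref{proposition:circle} are in place: connectedness, absence of cut points, and existence of a cut pair. Since by hypothesis $\decomp$ is not a circle, the only remaining condition of \fullref{proposition:circle} must fail, namely the condition that every cut pair is crossed by a cut pair. Hence there exists at least one uncrossed cut pair.

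The argument is essentially a one-line logical consequence of \fullref{proposition:circle}, so there is no substantial obstacle; the only care required is to correctly quote the definition of rigidity (which rules out both cut points and cut pairs simultaneously) so that the case split into ``has a cut point'' versus ``has a cut pair but no cut point'' is exhaustive.
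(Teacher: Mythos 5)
Your argument is correct and is exactly the intended derivation: the paper states this corollary without proof immediately after \fullref{proposition:circle}, precisely because the deduction you give (unpack rigidity into ``no cut points and no cut pairs,'' then apply the contrapositive of the proposition in the no-cut-point case) is the natural one. No gaps.
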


\subsection{Uncrossed Cut Pairs}\label{sec:uncrossed}
In this section we show that uncrossed cut pairs have infinite cyclic
stabilizers. 
A priori, the preimage in $\bdry\F$ of a pair of points in $\decomp$
could be as many as four points. This first step is to rule out that possibility.
\begin{lemma}\label{lemma:uncrossedaxis}
  The preimage of an uncrossed cut pair is two points.
\end{lemma}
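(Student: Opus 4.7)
The plan is to prove the contrapositive: if $\qmap^{-1}(\{x_0,x_1\})$ contains more than two points, then $\{x_0,x_1\}$ is crossed. Without loss of generality $x_0=\qmap(\{l^+,l^-\})$ for some line $l\in\lines$; let $\left<w\right>$ denote the cyclic stabilizer of $l$, so that a power of $w$ represents an element of $\multimot$.

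First I would exploit the $w$-symmetry at $x_0$. The induced homeomorphism of $\decomp$ fixes $x_0$ but no other point, since the only $w$-fixed points in $\bdry\tree$ are $l^\pm$, both of which project to $x_0\neq x_1$. Hence the orbit $\{w^nx_1\}_{n\in\mathbb{Z}}$ is infinite, and the north--south dynamics of $w$ on $\bdry\tree$ force $w^nx_1\to x_0$ as $n\to\pm\infty$, approaching through the two different ends of $l$. By $w$-equivariance, $\{x_0,w^nx_1\}$ is also a cut pair for every $n$.

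The heart of the proof is to show that these two directions of accumulation land in different complementary components $A_i,A_j$ of $\decomp\setminus\{x_0,x_1\}$. This is where I would use \fullref{lemma:connectedhull}: let $\hull$ be the convex hull of $\qmap^{-1}(\{x_0,x_1\})$ in $\closure{\tree}$. Since the preimage has at least three points, $\hull$ strictly contains $l$. The graph $\Wh(l)$ is connected (because $\{x_0\}$ alone is not a cut set, as $x_0$ is not a cut point), so the disconnection of $\Wh(\hull)$ into $k\geq 2$ components must be produced by the piece of $\hull$ outside of $l$. Combined with the $\left<w\right>$-periodicity of the Whitehead graph along $l$, this should force the ``$l^+$-tail'' and ``$l^-$-tail'' contributions to $\Wh(\hull)$ to end up in distinct components, giving integers $a\ll 0 \ll b$ with $w^ax_1\in A_i$ and $w^bx_1\in A_j$ for $i\neq j$.

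Finally, I would exhibit a cut pair crossing $\{x_0,x_1\}$. The natural candidate is $\{w^ax_1,w^bx_1\}$, whose two points lie in different complementary components by construction. To verify that it is a minimal cut set, I would appeal to \fullref{lemma:3of4} together with $w$-equivariance: applying $w^{-a}$ reduces the question to whether $\{x_1,w^{b-a}x_1\}$ is a cut pair, and this follows from combining the cut pair $\{x_0,x_1\}$ with its $w^{b-a}$-translate $\{x_0,w^{b-a}x_1\}$, both of which share $x_0$ and together cut off the relevant orbit points. The main obstacle is the Whitehead-graph argument of the middle paragraph: it requires a careful analysis of how the extra preimage points of $x_1$ interact with the $\left<w\right>$-periodic structure along $l$ to force the necessary distribution of orbit points among distinct components.
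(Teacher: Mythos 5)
Your overall strategy is the same as the paper's: assume $\qmap^{-1}(x_0)=\{l^-,l^+\}$ and derive a contradiction by exhibiting some $\{w^{-i}x_1,w^{j}x_1\}$ crossing $\{x_0,x_1\}$. The paper is extremely terse here, saying only ``by an argument similar to the proof of \fullref{lemma:3of4},'' and your first and last paragraphs flesh out the surrounding bookkeeping correctly.

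The problem is that the crux of the argument --- showing that the orbit points $w^{n}x_1$ do not all land in a single complementary component of $\decomp\setminus\{x_0,x_1\}$ --- is exactly the part you leave as ``this should force \dots,'' and it does not follow from what precedes it. You correctly observe that every component of $\Wh(\hull)$ must meet the vertices created by subdividing the $\xi_1$-branch of $\Wh(l)$ (otherwise it would survive as a component of the connected graph $\Wh(l)$), but that fact says nothing about where the branches at $w^{\pm n}v_0$ (the ones that actually contain $w^{\pm n}\xi_1$) fall; a priori all of them could sit in a single component of $\Wh(\hull)$, and ruling that out is precisely the content of the missing step. A second, smaller issue: in the final paragraph you cannot literally invoke \fullref{lemma:3of4} to certify that $\{w^{a}x_1,w^{b}x_1\}$ is a cut pair, because that lemma takes as input two \emph{crossing} cut pairs, which in particular must be disjoint, whereas $\{x_0,x_1\}$ and $\{x_0,w^{b-a}x_1\}$ share the point $x_0$; what is needed is the local-connectivity analysis at $x_0$ that underlies the proof of \fullref{lemma:3of4} (the fact that $\{x_0,x_1\}$ being a cut pair forces small punctured neighborhoods of $x_0$ to have exactly as many components as $\decomp\setminus\{x_0,x_1\}$), adapted to the situation where $x_0$ has a two-point preimage. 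In short: right plan, but the load-bearing step is asserted rather than proved.
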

\begin{proof}
Let $\{x_0,x_1\}\subset\decomp$ be an uncrossed cut pair, and
suppose $|\qmap^{-1}(x_0)|=2$. Then there is an $h\in\F$ and a
$\mot\in\multimot$ such that
$\qmap^{-1}(x_0)=\{h\inv{w}^\infty,hw^\infty\}$.
Replacing $\{x_0,x_1\}$ by $\{\inv{h}x_0,\inv{h}x_1\}$, we may assume
$h$ is trivial.
Let $\hull$ be the convex hull of $\qmap^{-1}(\{x_0,x_1\})$.
Let $\phi\from[-\infty,\infty]\onto[\inv{w}^\infty,w^\infty]$ be the simplicial
geodesic with $\phi(0)=\bp$.
Let $p$ be large enough so that
$\phi([-\infty,-p-1])\cup\phi([p+1,\infty])$ contains no branch point of $\hull$.

By \fullref{lemma:twocomponentsplit}, for each $\epsilon\in\pm$ there
is a unique component $\wc_\epsilon\subset\Wh(\hull)$ such that
$\cup_{\we\in\wc_\epsilon}\line_\we$ meets
$\phi([\epsilon\cdot(p+1),\epsilon\cdot\infty])$, and these are the
only two components. 

We reach a contradiction by exhibiting a cut pair crossing
$\{x_0,x_1\}$.
For each $\epsilon\in\pm$, we have
$\mot^{\epsilon\cdot(2p+1)}x_1\subset\qmap(\bdry\tree_{\wc_\epsilon})$, so
$\{\mot^{2p+1}x_1,\mot^{-2p-1}x_1\}$ crosses $\{x_0,x_1\}$.

$\{\mot^{2p+1}x_1,\mot^{-2p-1}x_1\}$ is a cut pair by
\fullref{lemma:transitivecutpairs}:
$\mot^{2p+1}\{x_1,x_0\}$=$\{\mot^{2p+1}x_1,x_0\}$ and
$\mot^{-2p-1}\{x_0,x_1\}$=$\{x_0,\mot^{-2p-1}x_1\}$ are both cut pairs,
and,
by \fullref{lemma:niceneighborhood}, for every neighborhood $N'$ of
$x_0$ there exists a connected neighborhood $N\subset N'$ such that
$N\setminus\{x_0\}$ has exactly two components.
\end{proof}

\begin{lemma}\label{lemma:crossedcutpair}
  If 
  $\qmap(\{\xi,g^\infty\})$ is a cut pair, for some $g\in\F$ and
  $\xi\in\bdry\tree$ with $\xi\neq\inv{g}^\infty$\!, then it is a crossed cut pair.
\end{lemma}
\begin{proof}
We assume, without loss of generality, that $g$ is cyclically reduced.
  Let $\hull$ be the convex hull of $\{\inv{g}^\infty,g^\infty,\xi\}$.

There is a $g$--action on $\Wh([\inv{g}^\infty,g^\infty])$ given by
$\we\mapsto \gr_{[\bar{g}^\infty,g^\infty]}(g\line_\we)$ for an edge $\we$. 
This action
permutes the components of $\Wh([\inv{g}^\infty,g^\infty])$.
Replacing $g$ by a suitable power, we may assume the components are
fixed.

Let $\psi\from [0,\infty]$ be the simplicial geodesic ray with
$\psi(\infty)=\xi$ and
$\psi([0,\infty])\cap[\inv{g}^\infty,g^\infty]=\psi(0)$.
Let $\phi\from [-\infty,\infty]\onto [\inv{g}^\infty,g^\infty]$ be the
simplicial geodesic with $\phi(0)=\psi(0)$.

Let $\line\in\lines|_{\phi((0,1))}$ be a line such that
$\gr_\hull(\line)$ belongs to a component of $\Wh(\hull)$ with an end
at $\psi(\infty)$.
Then there exists an edge path $\mathfrak{P}:\gr_{[\bar{g}^\infty,g^\infty]}(\line)=\we_0,\dots,\we_k$ in
$\Wh([\inv{g}^\infty,g^\infty])$ with $\we_k$ incident to
$\gr_{[\bar{g}^\infty,g^\infty]}(\psi(\infty))$.
For all sufficiently large $m$, we have
$\phi([-\infty,\infty])\cap\bigcup_{\we\in
  g^m\mathfrak{P}}\line_\we\subset \phi([1,\infty])$.
Since the $g$--action preserves components, $\we_0$ and $g^m\we_0$ are in
the same component.
By \fullref{lemma:nonbacktracking} and \fullref{lemma:connectedray},
there is an edge path in
$\Wh(\phi([1,\infty]))\wrel\phi([0,\infty])$ that connects
$\gr_{\phi([1,\infty])}(\line_{\we_0})$ to
$\gr_{\phi([1,\infty])}(\line_{g^m\we_0})$.
By concatenating $g^m\mathfrak{P}$, we see that
$\gr_{\phi([1,\infty])}(\line_{\we_0})$ is in the
$\gr_{\phi([1,\infty])}(g^m\psi(\infty))$ component of
$\Wh(\phi([1,\infty]))\wrel\phi([0,\infty])$.
Since this is true for all sufficiently large $m$, every line in
$\lines|_{\phi((0,1))}$ that contributes an edge to a component of $\Wh(\hull)$ with an end
at $\psi(\infty)$ contributes an edge to the same component of
$\Wh(\phi([1,\infty]))\wrel\phi([1,\infty])$, so there is only one
component, $\wc_+$, of $\Wh(\hull)$ with ends at $\phi(\infty)$ and $\psi(\infty)$.
The same argument in the $\phi(-\infty)$ direction shows there is only
one component, $\wc_-$, of $\Wh(\hull)$ with ends at $\phi(-\infty)$ and
$\psi(\infty)$.
Since $\qmap(\{\psi(\infty),\phi(\infty)\})$ is a cut pair, $\wc_+\neq\wc_-$.

It follows from \fullref{lemma:leavesofcomponents} that for
$\epsilon\in\pm$, 
$\hull\cap\bigcup_{\we\in\wc_\epsilon}\line_\we=[\psi(\infty),\phi(\epsilon\cdot\infty)]$, so
$\gr_\hull(g^{\epsilon\cdot 1}\psi(\infty))\in\wc_\epsilon$.
The map $\we\mapsto \gr_{[\psi(\infty),\phi(\infty)]}(\line_\we)$
sends $\wc_+$ onto a component of $\Wh([\psi(\infty),\phi(\infty)])$
not containing the vertex
$\gr_{[\psi(\infty),\phi(\infty)]}(\phi(-\infty))$, and sends $\wc_-$
into the component of $\Wh([\psi(\infty),\phi(\infty)])$
containing
$\gr_{[\psi(\infty),\phi(\infty)]}(\phi(-\infty))$, so
$\gr_{[\psi(\infty),\phi(\infty)]}(\inv{g}\psi(\infty))$ and
$\gr_{[\psi(\infty),\phi(\infty)]}({g}\psi(\infty))$ are in
different components of $\Wh([\psi(\infty),\phi(\infty)])$.
Moreover, for $\line\in\lines|_{[\bar{g}\phi(0),g\phi(0)]}$ we have
that $\gr_{g\hull}(\line)\in g\wc_-$ if and only if $\gr_{\bar{g}\hull}(\line)\in \inv{g}\wc_+$.
It follows that $\Wh([\inv{g}\psi(\infty),g\psi(\infty)])$ has a
component not containing
$\gr_{[\bar{g}\psi(\infty),g\psi(\infty)]}(\phi(-\infty))$ and $\gr_{[\bar{g}\psi(\infty),g\psi(\infty)]}(\phi(\infty))$.
Therefore, $\qmap(\{\inv{g}\psi(\infty),g\psi(\infty)\})$ is a cut
pair, and it crosses $\qmap(\{\psi(\infty),\phi(\infty)\})$.
\end{proof}

\begin{proposition}\label{proposition:periodic}
Uncrossed cut pairs are rational:
For every uncrossed cut pair $S$
there exists an non-trivial element $f\in\F$ such that
$\left<f\right>$ is the stabilizer of $S$.
\end{proposition}
\begin{proof}
Let $S=\{x_0,x_1\}$.
By \fullref{lemma:uncrossedaxis}, $|\qmap^{-1}(x_i)|=1$ for both
$i\in\{0,1\}$.
Let $\phi\from[-\infty,\infty]\onto [\qmap^{-1}(x_0),\qmap^{-1}(x_1)]$
be a simplicial geodesic. 

For increasing $r$, the sequence
$\#\Wh(\phi([r+1,\infty]))\wrel\phi([r,\infty])$ is a
non-decreasing sequence of integers, bounded above by the maximum
valence of $\Wh(\bp)$.
Assume $r$ is large enough so that the sequence has achieved its
maximum. 

Let $g$, $h$, and $a$ be the elements provided by
\fullref{lemma:periodic} for $\phi\from [r,\infty]\to\closure{\tree}$.
There is some $r'>r$ such that $\phi(r')=h$ and some $r''>r'+1$ such that $\phi(r'')=gha$.
Because of our choice of $r$, \fullref{lemma:periodic}
(\ref{item:preserves}) implies that for
$\line\in\lines|_{\phi([r',r'+1])}$, the edges
$\gr_{\phi([r'+1,\infty])}(\line)$ and
$\gr_{\phi([r'+1,\infty])}(g\line)$ are in the same component of $\Wh(\phi([r'+1,\infty]))\wrel\phi([r',\infty])$.

If $\phi(\infty)\neq g^\infty$ and $\phi(-\infty)\neq \inv{g}^\infty$
then, since $\qmap(\{\inv{g}^\infty,g^\infty\})$ is a cut point or cut
pair by \fullref{corollary:approximatinggiscut} and $S$ is uncrossed, $\gr_{\phi([-\infty,\infty])}(\inv{g}^\infty)$
and $\gr_{\phi([-\infty,\infty])}({g}^\infty)$ are in the same
component of $\Wh(\phi([-\infty,\infty]))$. 
Let $\line\in\lines|_{\phi((r',r'+1))}$ be a line such that
$\gr_{\phi([-\infty,\infty])}(\line)$ is in a component of
$\Wh(\phi([-\infty,\infty]))$ not containing
$\gr_{\phi([-\infty,\infty])}(\inv{g}^\infty)$ and
$\gr_{\phi([-\infty,\infty])}({g}^\infty)$.
Then $\gr_{\phi([r'+1,\infty])}(\line)$ and
$\gr_{\phi([r'+1,\infty])}(g\line)$ are in the same component of
$\Wh(\phi([r'+1,\infty]))\wrel\phi([r',\infty])$, which, in light of the
inclusion via splicing, implies
that $\gr_{\phi([-\infty,\infty])}(\line)$ and
$\gr_{\phi([-\infty,\infty])}(g\line)$ are in the same component of
$\Wh(\phi([-\infty,\infty]))$.
This implies that $\gr_{\phi([-\infty,\infty])}(g\phi(-\infty))$ is in
a component of $\Wh(\phi([-\infty,\infty]))$ not containing the vertex
$\gr_{\phi([-\infty,\infty])}({g}^\infty)$.
However, $\gr_{\phi([-\infty,\infty])}(g\phi(\infty))$ is in
the component of $\Wh(\phi([-\infty,\infty]))$ containing 
$\gr_{\phi([-\infty,\infty])}({g}^\infty)$, and $\qmap(\{g\phi(-\infty),g\phi(\infty)\})$ is a cut pair.
This contradicts the hypothesis that $S$ is uncrossed.

In the two cases that $[\inv{g}^\infty,g^\infty]$ and
$\phi([-\infty,\infty])$ share exactly one endpoint,
\fullref{lemma:crossedcutpair} gives a contradiction to the hypothesis
that $S$ is uncrossed.
Therefore $\phi(\infty)=g^\infty$ and
$\phi(-\infty)=\inv{g}^\infty$, and we take $f$ to be an indivisible root
of $g$.
\end{proof}

\begin{proposition}\label{proposition:boundedperiodicity}
$\decomp$ has finitely many orbits of uncrossed cut pairs.
\end{proposition}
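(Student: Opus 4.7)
The plan is to separate cut points from uncrossed cut pairs and treat each case using results already at hand.

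For cut points, the argument is immediate from \fullref{lemma:cutpointsareinmultiword}: every cut point has stabilizer conjugate to $\left<w\right>$ for some $w\in\multimot$, and the cut point itself must be $\qmap(\{\inv{w}^\infty,w^\infty\})$. Since $\multimot$ is a finite multiword and the cut point is determined by its stabilizer, there are at most $|\multimot|$ orbits of cut points under the $F$-action.

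For uncrossed cut pairs, I would combine the three preceding results in a chain. First, \fullref{lemma:uncrossedaxis} tells me that the preimage $\qmap^{-1}(\{x_0,x_1\})$ in $\bdry\tree$ consists of exactly two points, so its convex hull is a bi-infinite geodesic. Next, \fullref{lemma:periodic} applied to this pair produces an element $hg\inv{h}\in F$ whose axis approximates the hull in the strong sense spelled out in that lemma, and whose conjugacy class contains a representative (namely $g$) of length at most the uniform bound $z$ depending only on the rank of $F$, on $\multimot$, and on $\mathcal{B}$. Finally, \fullref{proposition:periodic} upgrades the approximation to an equality: the uncrossed cut pair is exactly $\qmap(\{h\inv{g}^\infty, hg^\infty\})$, so the $F$-orbit of $\{x_0,x_1\}$ is determined by the conjugacy class of the cyclic subgroup $\left<g\right>$.

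Putting these together, every uncrossed cut pair lies in the $F$-orbit of some pair of the form $\qmap(\{\inv{g}^\infty,g^\infty\})$ with $g$ of length at most $z$. Since there are only finitely many elements of $F$ of length at most $z$ in the basis $\mathcal{B}$, there are only finitely many such orbits. Combined with the cut point count, this yields the desired finiteness.

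There is no serious obstacle here, since the heavy lifting was done in \fullref{lemma:periodic} and \fullref{proposition:periodic}; the present proposition is essentially a corollary that records the uniform length bound as a finiteness statement for orbits. The only thing to be careful about is that distinct elements $g$ of bounded length can produce the same orbit of cut pairs (they do, in fact, whenever they are conjugate), but this only decreases the orbit count and does not affect finiteness.
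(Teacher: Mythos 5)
Your proof follows the same route as the paper: cut points via \fullref{lemma:cutpointsareinmultiword}, and uncrossed cut pairs via the chain \fullref{lemma:uncrossedaxis} $\to$ \fullref{lemma:periodic} $\to$ \fullref{proposition:periodic}, with the uniform length bound furnishing the finiteness. The paper states this more tersely, but the logical content is identical.
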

\begin{proof}
The element $g$ in \fullref{proposition:periodic} is conjugate to a word of bounded length provided by
\fullref{lemma:periodic}. There are finitely many such conjugacy classes.
\end{proof}

\begin{lemma}\label{mainlemma}
If $\decomp$ is not rigid or a circle then there is an indivisible element $g\in \F$ such that
 $\qmap(\{\inv{g}^\infty,g^{\infty}\})$ is a cut set that is not crossed by
 any cut pair.
\end{lemma}
\begin{proof}
By
\fullref{corollary:uncrossedexist} there exist cut points or uncrossed
cut pairs.
For cut points apply \fullref{lemma:cutpointsareinmultiword};
for uncrossed cut pairs apply \fullref{lemma:uncrossedaxis}.
\end{proof}

\subsection{The Splitting Criterion}\label{sec:otalsplit}
\fullref{proposition:multipleotal} is a generalization of a construction of Otal
\cite{Ota92}, who proves it in the case that $\{S_i\}_{i\in I}$ is a
single orbit of cut points.
The main change is \fullref{claim:minimalelements}, which replaces Otal's Lemma 3.3.
\begin{proposition}\label{proposition:multipleotal}
Consider a non-empty, $\F$--invariant collection of disjoint cut sets $\{S_i\}_{i\in
  I}$ in $\decomp$ satisfying the following
conditions:
\begin{enumerate}
\item For each $i\in I$ there is an indivisible $g_i\in \F$ such that
  $S_i=\qmap(\{\inv{g_i}^\infty,g_i^{\infty}\})$.
\item The cut sets are pairwise non-crossing.
\item The set $\{S_i\}_{i\in I}$ is a union of finitely many $\F$--orbits.
\end{enumerate}
Then $\F$ splits as a graph of groups rel $\multimot$ with cyclic edge
stabilizers.
The vertex set is bipartite, with Type  1 vertices stabilized by
maximal cyclic subgroups generated by the $g_i$ and Type 2 vertices
stabilized by non-cyclic subgroups.
\end{proposition}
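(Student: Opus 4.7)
The plan is to realize the graph of groups via a Bass--Serre tree $\tree'$ built out of the given data, then read off the required properties of vertex and edge stabilizers.

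First I would define $\tree'$ as a bipartite graph with vertex set $V_1 \sqcup V_2$. Take $V_1 = \{S_i\}_{i\in I}$. For $V_2$, declare two points $x,y \in \decomp \setminus \bigcup_i S_i$ to be equivalent if no $S_i$ separates them; let $V_2$ be the set of equivalence classes (together with their closures in $\decomp$, which will contain the adjacent $S_i$). Put an edge from $S_i \in V_1$ to $[v] \in V_2$ whenever $S_i$ meets $\closure{[v]}$. The non-crossing hypothesis is what makes this definition sensible: each $S_i$ meets the closure of exactly the two complementary components of $\decomp \setminus S_i$ (or one component, if $S_i$ is a cut point stabilized by a word of $\multimot$), and every other $S_j$ lies in exactly one of those complementary components.

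Next I would verify that $\tree'$ is a tree. Connectivity follows because $\decomp$ is connected: any two Type~2 classes can be joined by an arc of cut sets, since a chain of consecutive separations by $S_i$'s between them can be extracted using the finite intersection property on nested complementary components. Acyclicity is the place where pairwise non-crossing is essential: a cycle in $\tree'$ would produce two distinct Type~1 vertices $S_i, S_j$ each of which separates the two Type~2 vertices adjacent to both, and a careful examination of the four relative positions of $S_i, S_j$ in $\decomp$ (using \fullref{lemma:3of4}--style arguments on complementary components) rules this out unless $S_i, S_j$ cross. Because the collection $\{S_i\}$ is $F$--equivariant, $F$ acts simplicially on $\tree'$; after subdividing edges if necessary to remove inversions, we obtain an action without edge inversions, and cocompactness is immediate from the finite orbit hypothesis.

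Then I would compute stabilizers. A Type~1 vertex $S_i = \qmap(\{\inv{g_i}^\infty, g_i^\infty\})$ is stabilized exactly by the setwise stabilizer of $\{\inv{g_i}^\infty, g_i^\infty\}$ in $\bdry\tree$, which is the maximal cyclic subgroup containing $g_i$ (since $F$ is free and torsion-free, there is no inversion swapping the two endpoints up to subdivision as above). A Type~2 vertex is stabilized by those $f \in F$ permuting the corresponding equivalence class; I would show this stabilizer is non-cyclic by noting that if it were cyclic, generated by some $h$, then every $S_j$ meeting $\closure{[v]}$ would be $\langle h \rangle$--invariant, which, given that the closure contains infinitely many boundary points coming from at least two independent directions in $\tree$ (or else $[v]$ is degenerate and absorbed), forces a contradiction. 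An edge $(S_i, [v])$ is stabilized by the intersection of the two vertex stabilizers, hence by a subgroup of $\langle g_i \rangle$, so cyclic.

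Finally, each $w \in \multimot$ must lie in some vertex stabilizer: its fixed pair $\qmap(\{\inv{w}^\infty, w^\infty\})$ is a single point in $\decomp$, hence either equals some $S_i$ (if $w$ is a power of $g_i$) putting $w$ in the corresponding Type~1 stabilizer, or lies in a single complementary class, putting $w$ in the corresponding Type~2 stabilizer. Applying Bass--Serre theory to the $F$--action on $\tree'$ produces the desired graph of groups decomposition rel $\multimot$, bipartite with the stated vertex types. The main obstacle I expect is the acyclicity step together with verifying that edge stabilizers really are cyclic (as opposed to trivial or ambiguously positioned between two Type~1 vertices), which is exactly where the non-crossing condition and the line structure of $\qmap^{-1}(S_i)$ supplied by hypothesis (1) are used in tandem.
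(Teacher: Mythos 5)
Your overall strategy — build the bipartite dual tree to the non-crossing family $\{S_i\}_{i\in I}$ and read the graph of groups off the $F$--action — is the same as the paper's (which follows Otal), but your implementation of the Type 2 vertices has a genuine gap: you never establish the key finiteness statement that only finitely many members of $\{S_i\}_{i\in I}$ can separate two given cut sets (equivalently, that minimal cut sets exist relative to a given cut set or point). The paper isolates exactly this as the replacement for Otal's Lemma 3.3 and proves it with a line-pattern argument: if $S_j$ separates $S_k$ from $S_l$, then the axis of $g_j$ must cross the finite geodesic joining the axes of $g_k$ and $g_l$, and since there are only finitely many orbits, only finitely many axes meet a compact subset of $\tree$. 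Without this, your construction breaks at several points. There genuinely exist points $x\in\decomp\setminus\bigcup_{i}S_i$ for which the order ``$S_i<S_j$ iff $S_i$ separates $x$ from $S_j$'' has no minimal element — these are precisely the points that the projection quasi-map of \fullref{lemma:projection} sends to ends of the Bass--Serre tree rather than to vertices. The equivalence class of such a point is a spurious Type 2 ``vertex'' whose closure meets no $S_i$, so your graph is not connected as defined; your connectivity argument has no finite chain of separating cut sets to extract (the ``finite intersection property'' does not yield finiteness of the separating family); and cocompactness is not ``immediate from the finite orbit hypothesis,'' since that hypothesis controls only the Type 1 vertices, not the equivalence classes. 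The same missing finiteness is used silently again at the end, when you place $w\in\multimot$ into a Type 2 stabilizer: you need to know that $\qmap(\{\inv{w}^\infty,w^\infty\})$ lies in a genuine region, i.e.\ that minimal elements exist for its order, which is exactly what the paper proves for such rational points via the axis argument in \fullref{lemma:projection}.

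A secondary but real error: the claim that each $S_i$ meets the closures of ``exactly the two complementary components (or one component, if $S_i$ is a cut point)'' is false. A cut point has at least two complementary components by definition, and an uncrossed cut pair can have many — for the Baumslag--Solitar words $\inv{a}^q\inv{b}a^pb$ the pair $\qmap(\{\inv{a}^\infty,a^\infty\})$ has $p+q$ complementary components; the correct count comes from the components of the Whitehead graph of the hull as in \fullref{lemma:connectedhull}. To repair the proof, either prove the finiteness lemma first (the axis argument) and then show every non-degenerate class is adjacent to some $S_i$ while discarding the end classes, or adopt the paper's definition from \fullref{proposition:otalsplit}, in which a Type 2 vertex is the set consisting of a cut set together with all cut sets minimal with respect to it inside one complementary component — that definition builds the attachment to Type 1 vertices into the construction and avoids the end classes entirely.
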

\begin{proof}
For each $i\in I$ there is a partial ordering $<_i$ on $\{S_j\}_{j\in
  I\setminus\{i\}}$ defined by $S_j<_i S_k$ if $S_j$ separates $S_i$ from $S_k$,
that is, if $S_i$ and $S_k$ are in different complementary components of $S_j$.
Since the cut sets are pairwise non-crossing, the complementary
component of $S_j$ containing $S_k$ is well defined.

\begin{claim}\label{claim:minimalelements}
  There exist $<_i$--minimal elements.
\end{claim}
\begin{claimproof}
Suppose $S_i$, $S_j$, and $S_k$ are elements of $\{S_i\}_{i\in I}$.
Let $\X$ be $[\inv{g}_i^\infty,g_i^\infty]\cap
[\inv{g}_k^\infty,g_k^\infty]$, if this intersection is non-empty.
Otherwise, 
let $\X$ be the geodesic segment in $\tree$ connecting
$[\inv{g}_i^\infty,g_i^\infty]$ to $[\inv{g}_k^\infty,g_k^\infty]$.
Suppose $[\inv{g}_j^\infty,g_j^\infty]$ does not intersect $\X$.
Then there exists an edge $e\in\tree$ incident to, but not contained
in, $[\inv{g}_j^\infty,g_j^\infty]$, such that $e$ separates
$[\inv{g}_j^\infty,g_j^\infty]$ from at least three of the points
$\inv{g}_i^\infty$, $g_i^\infty$, $\inv{g}_k^\infty$, and
$g_k^\infty$.
It then follows from \fullref{lemma:basicconnectivity} that $S_i$ and
$S_k$ contain points in a common component of $\decomp\setminus S_j$, so $S_j$ does not separate
$S_i$ from $S_k$ in $\decomp$.

We conclude that for fixed $S_i$ and $S_k$
the only $S_j$ such that $S_j<_i S_k$ belong to the finite set of
those for which $[\inv{g}_j^\infty,g_j^\infty]$ intersects $\X$.
\end{claimproof}

Define a graph on which $\F$ acts without
inversions as follows.
The graph has two types of vertices. 
There is a Type 1 vertex $v_i$ for each $S_i$.
Given a Type 1 vertex $v_i$, $S_i$ has finitely many complementary
components $C_{i,1},\dots,C_{i,m_i}$.
For each $i,j$, consider the subset $\{v_i\}\cup\{v_k\mid S_k\text{ is
}<_i\text{--minimal, and }S_k\in C_{i,j}\}$.
Define this subset to be a Type 2 vertex.
Define adjacency by inclusion. 

Since the $S_i$ are cut sets, this graph is a tree.
The quotient of this tree by the $\F$--action contains one vertex
of Type 1 for each orbit of cut set, and some
finite number of adjacent Type 2 vertices.
The stabilizers of the Type 1 vertices are the groups
$\left<g_i\right>$, so we have a cyclic splitting of $\F$.

The generators of the line pattern must be conjugate into the vertex
groups, otherwise we would have a line in the pattern crossing from
one component of some $\Wh([\inv{g_j}^\infty,g_j^\infty])$ to another, which is
absurd.
\end{proof}

Combining
\fullref{proposition:multipleotal} and \fullref{mainlemma} gives us a
splitting theorem:

\begin{theorem}[Splitting Theorem]\label{theorem:splitting}
  If $\decomp$ is not rigid then either $(\F,\multimot)$
is a three-holed sphere or $\F$ splits over $\mathbb{Z}$ relative to $\multimot$.
\end{theorem}
\begin{proof}
If $\decomp$ is neither rigid nor a circle then  \fullref{mainlemma} provides a $g$
so that the translates of $\qmap(\{\inv{g}^\infty,g^\infty\})$
satisfy the hypotheses of \fullref{proposition:multipleotal}, which gives
a relative splitting.

If $\decomp$ is a circle then, by \fullref{lemma:circles}, $(\F,\multimot)$ is a surface with boundary.
Either this is a three-holed sphere, or
there exists an essential, non-peripheral simple closed curve in the surface, which gives a relative splitting.
\end{proof}

\subsection{Refining Splittings}\label{sec:refine}
\fullref{theorem:splitting} tells us when we can split $\F$ rel
$\multimot$. 
In this subsection we determine when a splitting can be refined.

\begin{definition}
  Let $\Gamma$ be a graph of groups decompositions of $\F$ rel
  $\multimot$ with cyclic edge stabilizers.
Define the \emph{augmented multiword} $\augw$ to be a
multiword in $\F$ obtained by choosing generators of representatives
of the distinct conjugacy classes of maximal cyclic subgroups of $\F$ containing the elements of
$\multimot$ and the generators of each of the edge groups of $\Gamma$.
The choices can be, and are, made so that $\multimot\subset\augw$.
\end{definition}

\begin{definition}
  Define the \emph{augmentation map}, $\augmap$, to be the quotient
  map $\augmap\from \decompw \to \decomp_{\augw}$. Note that $\qmap_{\augw}=\augmap \circ \qmap_{\multimot}$.
\end{definition}

\begin{lemma}\label{lemma:embed}
 Let $G$ be a non-cyclic
  vertex group of $\Gamma$.
The decomposition space $\decomp_{\indg}$ of $G$ corresponding
to $\indg$ embeds naturally into the decomposition space
$\decomp_{\augw}$ of $\F$ corresponding to $\augw$. 
\end{lemma}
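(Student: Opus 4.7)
My plan is to define the embedding as the boundary extension of the inclusion $\iota\from G\hookrightarrow F$ and then verify that it respects the quotients defining the two decomposition spaces.

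Because $F$ is free and $G$ is a finitely generated subgroup (vertex groups in a finite cyclic-edge graph-of-groups decomposition of a finitely generated free group are themselves finitely generated free groups), $G$ is quasi-convex in $F$. Hence $\iota$ extends to a topological embedding $\bdry\iota\from\bdry G\hookrightarrow\bdry F=\bdry\tree$ with image the limit set $\Lambda(G)$. For each $g\in\indg$, the element $g$ is, up to line pattern equivalence, a member of $\augw$: either $g$ is a $G$-conjugate of an element of $\multimot$, so $g\in\augw$ by definition; or $g$ generates a maximal cyclic subgroup containing an edge group of $\Gamma$ incident to the $G$-fixed vertex, so again $g\in\augw$. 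Since $\bdry\iota$ is $G$-equivariant, it sends the axis of $g$ in the Cayley graph of $G$ to a geodesic with the same pair of endpoints as the axis of $g$ in $\tree$. Therefore $\bdry\iota$ sends pairs identified by $\lines_{\indg}$ to pairs identified by $\lines_{\augw}$, and so descends to a continuous map $\bar{\iota}\from\decomp_{\indg}\to\decomp_{\augw}$.

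The heart of the proof is injectivity of $\bar{\iota}$. Suppose $[\xi]\neq[\xi']$ in $\decomp_{\indg}$ but $\bar{\iota}([\xi])=\bar{\iota}([\xi'])$. Injectivity of $\bdry\iota$ rules out $\bdry\iota(\xi)=\bdry\iota(\xi')$, so the pair $\{\bdry\iota(\xi),\bdry\iota(\xi')\}$ consists of the two endpoints of the axis of some $g\in\augw$, and both endpoints lie in $\Lambda(G)$. I then consider the action of $F$ on the Bass-Serre tree $T$ of $\Gamma$, where $G$ fixes a vertex $v_0$. Any point of $\Lambda(G)$ is the limit of a sequence of $G$-translates of a fixed base vertex of $\tree$, so the $F$-equivariant orbit projection to $\closure{T}$ sends it to $v_0$. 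Thus $g^\infty$ and $\inv{g}^\infty$ both project to $v_0$; since the projection of the $g$-axis in $\tree$ is the $g$-axis in $T$ (or a fixed vertex), the element $g$ must fix $v_0$, i.e.\ $g\in G$.

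Knowing $g\in G\cap\augw$, one of two cases occurs. If $g$ is conjugate in $F$ to an element of $\multimot$, then the maximal cyclic subgroup of $G$ containing $g$ is generated by an element of $\indg$. If instead $g$ generates an edge group $G_e$ of $\Gamma$, then $g$ fixes both $v_0$ and $e$ in $T$, so it fixes the geodesic between them; in particular $g$ lies in an edge group incident to $v_0$, and again belongs to a maximal cyclic subgroup of $G$ generated by an element of $\indg$. In either case $\xi$ and $\xi'$ are the two endpoints of a line of $\lines_{\indg}$, contradicting $[\xi]\neq[\xi']$. Finally, $\bar{\iota}$ is a continuous injection between compact Hausdorff spaces, hence a topological embedding. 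The main obstacle will be the Bass-Serre tree analysis in the third paragraph, together with the bookkeeping in the fourth paragraph to ensure the element witnessing the identification in $\decomp_{\augw}$ gives rise to a genuine line of $\lines_{\indg}$ rather than merely a line in $\lines_{\augw}$ that happens to meet $\Lambda(G)$.
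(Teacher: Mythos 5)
Your overall route is the same as the paper's: extend $\iota\from G\hookrightarrow F$ to a topological embedding $\bdry G\hookrightarrow\bdry F$ and check that the equivalence relation induced by $\indg$ on $\bdry G$ is exactly the restriction of the one induced by $\augw$ on $\bdry F$ (the paper simply asserts this second point in one sentence, so the substance of your write-up is the injectivity step). Your first two paragraphs are fine, but the injectivity argument has a genuine flaw. The claim that $g^\infty$ and $\inv{g}^\infty$ ``project to $v_0$'' and hence that ``$g$ must fix $v_0$, i.e.\ $g\in G$'' is false: the orbit map $\tree\to T$ does not extend to a well-defined continuous map on $\closure{\tree}$ at endpoints of edge-group axes, and no such conclusion holds. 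A counterexample sits in the paper itself: take $\multimot=\{b,ba^2\}$ and $\Gamma$ the splitting $\left<b,a^2\right>*_{\left<a^2\right>}\left<a\right>$, with $G=\left<b,a^2\right>$ and $g=a$ (which lies in $\augw$ in the up-to-roots convention, since the edge group is $\left<a^2\right>$). Both $a^\infty$ and $\inv{a}^\infty$ lie in the image of $\bdry G$, yet $a\notin G$, $a$ does not fix the $G$--vertex $v_0$ (it fixes the cyclic vertex), and $a$ does not fix the edge, whose stabilizer is $\left<a^2\right>$. Consequently your fourth paragraph, which assumes $g\in G$ and that $g$ fixes both $v_0$ and $e$, rests on a false premise.

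What is true, and what repairs the argument, is that some nonzero power of $g$ lies in $G$: since $G$ is finitely generated it is quasiconvex in $F$, so the axis of $g$, having both endpoints in the limit set of $G$, stays in a bounded neighborhood of $G$, and a pigeonhole argument on the points $g^n$ of the axis produces $k\neq 0$ with $g^k\in G$. Now run your case analysis with $g^k$ (equivalently, with its $G$--maximal root): $g^k$ fixes $v_0$; if the class of $g$ comes from an edge group of $\Gamma$ (whose edge in the Bass--Serre tree need not be incident to $v_0$), or if the conjugate of the relevant $w\in\multimot$ is elliptic at a vertex other than $v_0$, then $g^k$ also fixes that other edge or vertex, hence the geodesic joining it to $v_0$, hence an edge incident to $v_0$; so $g^k$ lies in a $G$--conjugate of the image of an incident edge group and its $G$--maximal root is in $\indg$. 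In the remaining case an actual conjugate of an element of $\multimot$ lies in $G$ and its $G$--maximal cyclic subgroup contains $g^k$. Either way $\{\xi,\xi'\}$ is the endpoint pair of a line of $\lines_{\indg}$, which is the conclusion you need; but as written, the passage from ``both endpoints lie in the image of $\bdry G$'' to ``$g\in G$ and $g$ fixes $v_0$ and $e$'' is a genuine gap.
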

\begin{proof}
The inclusion $\iota\from G\hookrightarrow \F$ extends to an embedding $\bdry \iota \from \bdry G \hookrightarrow \bdry \F$.
The equivalence relation on $\bdry G$ coming from $\indg$ is the
restriction to $\bdry\iota(\bdry G)$ of the equivalence relation on
$\bdry \F$ coming from $\augw$.
\end{proof}

\begin{lemma}\label{lemma:connectedpieces}
Let $\Gamma$ be a graph of groups decomposition of $\F$ rel $\multimot$
with cyclic edge groups. 
Let $G$ be a non-cyclic vertex of $\Gamma$.
The decomposition space $\decomp_{\indg}$ of $G$ with respect to $\indg$ is connected.
\end{lemma}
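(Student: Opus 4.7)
The plan is to argue by contradiction, using \fullref{proposition:Whitehead} applied to $(G,\indg)$. Since $G$ is a subgroup of the free group $F$, it is itself free, so if $\decomp_{\indg}$ were disconnected then $G$ would admit a non-trivial free splitting $G = G_1 * G_2$ relative to $\indg$, meaning every element of $\indg$ is conjugate into $G_1$ or $G_2$.

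I would then promote this free splitting of $G$ to a free splitting of $F$ relative to $\multimot$. The crucial feature of the definition of $\indg$ is that it contains generators of the maximal cyclic subgroups of $G$ into which the incident edge groups of $\Gamma$ at $G$ include. Consequently each such edge group is conjugate in $G$ into either $G_1$ or $G_2$, and one can refine $\Gamma$ to a new graph of groups $\Gamma'$ by replacing $G$ with two new vertices of vertex groups $G_1$ and $G_2$ joined by a single edge of trivial edge group, reattaching each former incident edge of $G$ to the appropriate new vertex according to the conjugacy data (adjusting attaching maps by the relevant conjugators in $G$). Because any element of $\multimot$ with a representative in $G$ was also included in $\indg$, each such word is conjugate in $G$ into $G_1$ or $G_2$, so every element of $\multimot$ still lies in a vertex group of $\Gamma'$.

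Since $\Gamma'$ contains an edge with trivial edge group, collapsing its remaining edges yields a non-trivial splitting of $F$ as either a free product or an HNN extension over the trivial subgroup, with every element of $\multimot$ conjugate into a vertex group. This is a free splitting of $F$ relative to $\multimot$, contradicting the standing assumption. The main subtlety is the refinement step: one must verify that the free splitting of $G$ respects $\Gamma$ in the sense that the incident edges can legitimately be reattached, and that $\multimot$ still lies in the vertex groups of $\Gamma'$. Both conditions are built into the definition of $\indg$, so the verification is routine bookkeeping rather than a genuine obstacle.
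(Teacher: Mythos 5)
Your proposal is correct and takes essentially the same route as the paper. Both proofs hinge on the observation that a free splitting of $G$ relative to $\indg$ can be promoted, via the graph of groups $\Gamma$, to a free splitting of $F$ (the paper states this as a free splitting rel $\augw$; you phrase it rel $\multimot$, which is all you need). Both then contradict the standing assumption; the paper closes by observing that this would force $\decomp_{\augw}$ --- a quotient of the connected space $\decompw$ --- to be disconnected, while you contradict the equivalent group-theoretic formulation of the standing assumption directly. Your write-up is somewhat more explicit about the bookkeeping of the refinement $\Gamma'$, whereas the paper's proof is terse and leaves that step implicit, but the underlying idea is the same.
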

\begin{proof}
By \fullref{corollary:connected}, if $\decomp_{\indg}$ is not connected there is a free splitting of
  $G$ rel $\indg$.
This gives a free splitting of $\F$ rel $\augw$, which implies that
$\decomp_{\augw}$ is not connected. 
This is not possible since $\decomp_{\augw}$ is a quotient of
$\decompw$, which, by our standing assumption, is connected.
\end{proof}

\begin{lemma}\label{lemma:rigidboundary}
Let $\Gamma$ be a graph of groups decomposition of $\F$ rel $\multimot$
with cyclic edge groups. 
Let $G$ be a non-cyclic vertex of $\Gamma$ such that $\decomp_{\indg}$
is rigid.
Let $\left<g\right>$ be the stabilizer of an edge incident to $G$.
Then $S=\qmap_{\multimot}(\{\inv{g}^\infty,g^\infty\})$ is either a
cut point or uncrossed cut pair of $\decompw$.
\end{lemma}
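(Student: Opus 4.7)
The plan is to first establish that $S$ is a cut set using the splitting provided by the edge $\langle g \rangle$, then dispatch the case $g \in \multimot$, and finally, in the case $g \notin \multimot$, rule out crossings by exploiting the rigidity of $\decomp_{\indg}$.

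Since $\langle g \rangle$ is an edge group of the graph of groups $\Gamma$, which is a decomposition of $F$ rel $\multimot$ with cyclic edge groups, the Bass--Serre splitting along that edge is a splitting of $F$ over $\langle g \rangle$ rel $\multimot$. Applying \fullref{lemma:splitimpliescut} then yields that $S$ is either a cut point or a cut pair of $\decompw$. If $g \in \multimot$, then $\qmap_{\multimot}$ identifies the two endpoints of the axis of $g$, so $S$ consists of a single point and hence must be a cut point; the lemma is proved in this case.

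In the remaining case $g \notin \multimot$, the set $S$ consists of two distinct points, so it is a cut pair. To show it is uncrossed, I argue by contradiction: suppose a cut pair $\{y_0, y_1\}$ crosses $S = \{x_0, x_1\}$, with (after relabeling) $y_0$ in the complementary component of $S$ corresponding to the $G$-side of the edge $\langle g \rangle$ in the Bass--Serre tree $\mathcal{D}(\Gamma)$. By \fullref{lemma:3of4}, the pair $\{x_0, y_0\}$ is also a cut pair of $\decompw$. Pushing forward under the augmentation map $\augmap$, the image $\{p, \augmap(y_0)\}$, where $p := \augmap(S)$, separates $\decomp_{\augw}$, and $p$ lies in the embedded $\decomp_{\indg}$ by \fullref{lemma:embed}. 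Invoking the tree-of-spaces description from \fullref{theorem:structure} together with the natural retraction $\decomp_{\augw} \to \decomp_{\indg}$ that collapses each side branch to its attaching cut point, I would produce a cut pair $\{p, q\}$ of $\decomp_{\indg}$, where $q$ is either $\augmap(y_0)$ itself (if that point already lies in $\decomp_{\indg}$) or the attaching cut point of the side branch containing $\augmap(y_0)$. This contradicts the rigidity of $\decomp_{\indg}$.

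The main technical obstacle will be verifying that $\{p, q\}$ actually separates $\decomp_{\indg}$. Specifically, one must show that the two complementary components of $\{p, \augmap(y_0)\}$ in $\decomp_{\augw}$ that lie on the $G$-side of $S$ (the $\augmap$-images of the components $A_0 \cap B_0$ and $A_0 \cap B_1$ from the proof of \fullref{lemma:3of4}) each meet $\decomp_{\indg}$ nontrivially and do so in distinct components of $\decomp_{\indg} \setminus \{p, q\}$. This should follow from the tree structure of $\mathcal{D}(\Gamma)$: any connected subset of $\decomp_{\augw}$ whose closure meets $\decomp_{\indg}$ at two distinct points must itself intersect $\decomp_{\indg}$, since there are no loops available to route around it through side branches alone.
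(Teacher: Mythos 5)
Your overall strategy---transport a hypothetical crossing pair into $\decomp_{\indg}$ and contradict rigidity---is the same as the paper's, and your treatment of the easy parts (that $S$ is a cut point or cut pair via \fullref{lemma:splitimpliescut}, and the case $g\in\multimot$) is fine. But the central step has a genuine gap. You assert that $\augmap(\{x_0,y_0\})=\{p,\augmap(y_0)\}$ still separates $\decomp_{\augw}$, and then invoke \fullref{theorem:structure} and a ``natural retraction'' of $\decomp_{\augw}$ onto $\decomp_{\indg}$. None of this is available here: $\Gamma$ is an \emph{arbitrary} graph of groups decomposition with cyclic edge groups, not one produced from an uncrossed collection, so its edge cut sets $S_i$ may themselves be crossed cut pairs. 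The statement you need about cut sets surviving augmentation, \fullref{lemma:cutsarecutsinaug}, is proved only under the standing hypothesis that the $S_i$ form an uncrossed collection (its proof uses precisely that both endpoints of each identified pair lie in one complementary component), and \fullref{lemma:projection}/\fullref{theorem:structure} are established only for decompositions built by \fullref{proposition:multipleotal} from pairwise non-crossing cut sets. Concretely, $\augmap$ identifies the endpoint pair of every conjugate of every edge generator, and such a pair may have one point in each of the two complementary components of $\{x_0,y_0\}$ (or of $R$); the identification then merges the two sides, so the pushed-forward pair need not be a cut set and the ``side branch'' picture you lean on has not been constructed. Your closing remark that the claim ``should follow from the tree structure of $\mathcal{D}(\Gamma)$'' is therefore circular: the tree-of-spaces structure is exactly what is missing at this level of generality. (The choice of a well-defined ``$G$-side'' component of $\decompw\setminus S$ is a smaller unproved assertion of the same kind.)

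The paper's proof is designed around exactly this difficulty, and the fix is worth noting: instead of pushing forward $R$ (or $\{x_0,y_0\}$) alone, it enlarges the cut set to $R\cup\{S_j\}_{j\in J}$, where $J$ indexes all edge cut sets that cross or intersect $R$, before applying $\augmap$ and intersecting with $\decomp_{\indg}$. With this enlargement every identification that could join the two sides of $R$ takes place \emph{inside} the cut set, so the image still separates, and its intersection with $\decomp_{\indg}$ gives a cut pair there, contradicting rigidity. To repair your argument you would either need to add such an enlargement step yourself, or first prove (which the paper does not hand you) that the edge cut sets of an arbitrary $\Gamma$ are pairwise non-crossing and that the structure theorem extends to that setting; as written, the key separation claims are unsupported.
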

\begin{proof}
Suppose not. $S$ is a cut set, so by
\fullref{lemma:precisely2components} it is a cut pair with exactly two
complementary components, $B_0$ and
$B_1$. Assume $\qmap(\bdry G)\subset B_0$.

Since $S$ is crossed then there is a cut pair $R$ of $\decompw$
crossing $S$.
Let $r$ be the point of $R$ in $B_0$, and let $A_0$ and $A_1$ be the
complementary components of $R$.

Let $\{S_i\}_{i\in I}$ be the collection of cut points and cut pairs
corresponding to the edges of $\Gamma$.
$S$ is one of these, so set $S=S_{i_0}$.
Set: 
\[J=\{j\neq i_0\mid S_j \text{ crosses } R\text{ and }S_j\subset B_0\}\]
If $J=\emptyset$ then $\augmap(r)\in\decomp_{\indg}$, and:
\[\augmap(\closure{A}_0)\cap\augmap(\closure{A}_1)\cap\decomp_{\indg}=\augmap(\{S_{i_0},r\})\]
If $J\neq\emptyset$ then there is a $<_{i_0}$--minimal element $S_{i_1}$ of $\{S_j\}_{j\in
  J}$, and:
\[\augmap(\closure{A}_0)\cap\augmap(\closure{A}_1)\cap\decomp_{\indg}=\augmap(\{S_{i_0},S_{i_1}\})\]

In either case we get a cut pair of $\decomp_{\indg}$, contradicting rigidity.
\end{proof}

\begin{definition}
An \emph{uncrossed collection} in $\decomp_{\multimot}$ is a non-empty
union of orbits of cut points and uncrossed cut pairs.
\end{definition}

There are finitely many such orbits by
\fullref{lemma:cutpointsareinmultiword} and
\fullref{proposition:boundedperiodicity}, so, given an uncrossed
collection $\{S_i\}_{i\in I}$, \fullref{proposition:multipleotal} provides a
corresponding graph of groups decomposition $\Gamma$.
For the remainder of this section, fix an uncrossed collection
$\{S_i\}_{i\in I}$ and corresponding graph of groups $\Gamma$ with
Bass-Serre tree $\mathrm{BS}(\Gamma)$.

\begin{lemma}[Universality of Uncrossed
  Splittings]\label{lemma:uncrossedsplittingsareuniversal}
The stabilizer of a cut point or cut pair of $\decompw$ is elliptic in $\Gamma$.
\end{lemma}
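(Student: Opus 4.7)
\emph{Plan.} My plan is to refine the collection by adjoining the orbit of $T$, apply \fullref{proposition:multipleotal} again, and then conclude from the equivariance of a natural collapse map. The easy case is when $T$ already lies in the $F$-orbit of some $S_i$: then $T$ corresponds to a Type 1 vertex of $\mathcal{D}(\Gamma)$ whose stabilizer is precisely the stabilizer of $T$, so ellipticity holds automatically. Henceforth assume that the $F$-orbit of $T$ is disjoint from the orbits in $\{S_i\}$.

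The key observation is that the enlarged family $\mathcal{C}':=\{S_i\}_{i\in I}\cup F\cdot T$ is still an uncrossed collection. It is a non-empty union of finitely many $F$-orbits of cut points and uncrossed cut pairs, and its members are pairwise non-crossing: since $T$ is uncrossed it is not crossed by any $S_i$ or by any other $F$-translate of itself, and by the symmetry of crossing for cut pairs the $S_i$ likewise do not cross any translate of $T$. Applying \fullref{proposition:multipleotal} to $\mathcal{C}'$ yields a refined graph of groups $\Gamma'$ with Bass--Serre tree $\mathcal{D}'$. By construction, the orbit of $T$ is represented by a new family of Type 1 vertices in $\Gamma'$; let $v_T\in\mathcal{D}'$ denote the one whose stabilizer equals the stabilizer of $T$.

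The final step is to exhibit an $F$-equivariant simplicial collapse map $\phi\from\mathcal{D}'\to\mathcal{D}(\Gamma)$ coming from the inclusion $\{S_i\}\subset\mathcal{C}'$. On old Type 1 vertices (those coming from the $S_i$) this $\phi$ is the natural identification with their counterparts in $\mathcal{D}(\Gamma)$, while on each new Type 1 vertex together with its adjacent Type 2 vertices in $\mathcal{D}'$, the map $\phi$ collapses this subtree onto the single Type 2 vertex of $\mathcal{D}(\Gamma)$ that it refines. By $F$-equivariance, $\phi(v_T)\in\mathcal{D}(\Gamma)$ is then a vertex fixed by the stabilizer of $T$, which gives the desired ellipticity. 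The main obstacle is making the collapse map precise and verifying its equivariance; this reduces to checking that adjoining one uncrossed orbit to an uncrossed collection only subdivides existing Type 2 regions of $\mathcal{D}(\Gamma)$, without disturbing their Type 1 neighbors or the $F$-action on them.
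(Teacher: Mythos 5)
There is a genuine gap: your argument only treats the case where the cut pair $T$ is \emph{uncrossed}, but the lemma asserts ellipticity for an arbitrary cut point or cut pair, including crossed cut pairs. This more general case is precisely what is needed for the universality claim in the Decomposition Theorem, since a relative $\mathbb{Z}$--splitting may well come from a crossed cut pair (for instance any cut pair inside a QH--surface piece). Your key step, ``the enlarged family $\mathcal{C}' = \{S_i\}_{i\in I}\cup F\cdot T$ is still an uncrossed collection,'' is justified by ``since $T$ is uncrossed it is not crossed by \dots any other $F$--translate of itself.'' When $T$ is a crossed cut pair this can fail: translates $hT$ may cross $T$ (this happens, say, whenever $T$ sits inside a flexible QH--piece), so $\mathcal{C}'$ violates the pairwise non-crossing hypothesis of \fullref{proposition:multipleotal}, no refined tree $\mathcal{D}'$ exists, and the vertex $v_T$ you want to map down is not defined.

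What does remain true without the uncrossed hypothesis, by symmetry of crossing, is that $T$ does not cross any of the $S_i$ (because the $S_i$ are uncrossed). The paper exploits exactly this: it never enlarges the collection, but instead uses the equivariant projection $\pi_\Gamma\from\decompw\to\closure{\mathcal{D}(\Gamma)}$ from \fullref{lemma:projection} and shows directly that $\pi_\Gamma(T)$ lies in a single closed star. If it did not, some $S_i$ would have to separate the two points of $T$, forcing $T$ and $S_i$ to cross, contradicting that $S_i$ is uncrossed; hence $\pi_\Gamma(T)$ is contained in the closed star of a Type~2 vertex (or is a single Type~1 vertex), and by equivariance the stabilizer of $T$ fixes that vertex. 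Your refine-and-collapse strategy is a reasonable alternative when $T$ is a cut point or an uncrossed cut pair, but to cover all cut pairs you should replace it with a direct analysis of $\pi_\Gamma(T)$ as above. (A small additional point you elide: you should first observe, as the paper does, that the stabilizer of $T$ is trivial or maximal cyclic, so that $T$ is a rational cut set $\qmap(\{\inv{g}^\infty,g^\infty\})$ whenever its stabilizer is nontrivial.)
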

\begin{proof}
Let $S$ be a cut point or cut pair.
By considering the $\F$--action on the convex hull of $\qmap^{-1}(S)$
it is clear that the stabilizer of $S$ is either trivial or a maximal
cyclic subgroup.
 If its stabilizer is trivial we
are done, so assume its stabilizer is $\left<g\right>$.

If $g$ is not elliptic then it has an axis in $\mathrm{BS}(\Gamma)$.
A Type 1 vertex on this axis corresponds to a cut set $S_i$ separating the two points of $S$. 
By \fullref{lemma:precisely2components},  this would mean $S$ crosses $S_i$, contradicting the hypothesis
that the $S_i$ are uncrossed.
\end{proof}

\begin{lemma}\label{lemma:cutsarecutsinaug}
If $S$ is a cut point or cut pair of
  $\decompw$ then $\augmap(S)$ is a cut point or cut pair of
  $\decomp_{\augw}$.
If $R$ is a cut point or cut pair of
  $\decomp_{\augw}$ then $\augmap^{-1}(R)$ is a cut point or cut pair of $\decompw$.
\end{lemma}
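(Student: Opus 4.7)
My plan is to analyze the quotient map $\augmap$ directly. Recall that $\augmap$ identifies the two endpoints of each line of the Cayley graph corresponding to a generator of an edge group of $\Gamma$; equivalently, it collapses each $S_i$ in the uncrossed collection that is a cut pair (not already a cut point) to a single point, and is the identity on everything else. The crucial constraint that drives both directions is that no cut point or cut pair of $\decompw$ can cross any $S_i$: cut points are never crossed by cut pairs at all, and cut pairs cannot cross an uncrossed cut pair by definition.

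\emph{Forward direction.} Let $S$ be a cut point or cut pair of $\decompw$. If $S$ coincides with some $S_i$ that is a cut pair of $\decompw$, then $\augmap(S)$ is the corresponding Type 1 vertex of $\mathcal{D}(\Gamma)$, which is a cut point of $\decomp_{\augw}$ by the \fullref{theorem:structure}. Otherwise, I claim the complementary components $A_1, \ldots, A_k$ of $\decompw \setminus S$ push forward to pairwise separated connected subsets of $\decomp_{\augw} \setminus \augmap(S)$. Indeed, for any $S_i$ disjoint from $S$, the non-crossing observation forces $S_i$ to lie entirely in a single $A_j$, so the $\augmap$-identifications stay within individual components. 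Hence $\augmap(S)$ is a cut set of cardinality $|S|$.

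\emph{Backward direction.} Let $R$ be a cut point or cut pair of $\decomp_{\augw}$. Then $\augmap^{-1}(R)$ is a cut set of $\decompw$ by the elementary fact that the preimage of a disconnection under a continuous surjection is a disconnection. Each point $r \in R$ has preimage of size 1 or 2 in $\decompw$, with size 2 occurring precisely when $r$ is a Type 1 vertex cut point of $\decomp_{\augw}$. When $R$ is a cut pair, so neither point is a cut point, each preimage must be a singleton, giving $|\augmap^{-1}(R)| = 2$; neither preimage can itself be a cut point of $\decompw$, else the forward direction would force $R$ to contain a cut point. When $R$ is a cut point, $\augmap^{-1}(R)$ is either a cut point of $\decompw$ or an edge-group cut pair $S_i$. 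Either way, $\augmap^{-1}(R)$ is a cut point or cut pair.

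\emph{Anticipated main obstacle.} The delicate part is the forward cut-pair case: showing that $\augmap(S)$ is actually \emph{minimal}, i.e., that neither of its two points is itself a cut point of $\decomp_{\augw}$. This could fail only if some $p \in S = \{p,q\}$ also belongs to an uncrossed cut pair $\{p,p'\} = S_i$ with $p' \neq q$, because then $\augmap(p)$ would be a Type 1 cut point. Ruling out this configuration—or showing that it forces $S$ and $S_i$ to determine the same bipartition of $\decompw$, so that effectively $S=S_i$ and we are back in the already handled case—will require analyzing the local Whitehead-graph ``valence two'' structure at $p$ using \fullref{lemma:connectedhull}, together with the non-crossing constraint, to show that only one cut pair through $p$ can realize the given local separation.
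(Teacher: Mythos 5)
Your two completed directions coincide with the paper's own proof. The forward half rests, exactly as in the paper, on the fact that each collapsed pair lies in a single complementary component of $S$ (an uncrossed $S_i$ cannot be crossed by a cut pair, and a cut point cannot separate the two points of a cut pair), so the identifications do not merge complementary components; the backward half uses the preimage-of-a-separation observation together with the fact that a point of $\decomp_{\augw}$ with two-point preimage is $\augmap(S_i)$ for some $i$, hence a cut point by the forward half, and so cannot occur in a cut pair $R$. All of this is the published argument.

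Where you diverge is the obstacle you flag, and there are two things to say about it. First, the paper does not resolve it either: its proof of the forward half literally concludes only that $\augmap(S)$ is a cut \emph{set}, and its key sentence tacitly treats the collapsed pairs as disjoint from $S$. Second, your plan to \emph{rule out} the configuration is unlikely to succeed, because it appears to occur. If an uncrossed cut pair $S_i=\{p,p'\}$ has exactly two complementary components and one of the adjacent vertex pieces of the \rjsj is a QH--surface --- precisely the situation of the ``missing curve'' in the introduction, which has the rigid piece on one side and the empty four-holed sphere on the other --- then one can take $q$ a generic point of the neighbouring QH piece and check, using the local structure of $\decompw$ at the pair (each side of the piece limits to only one of $p$, $p'$, and everything attached along that side meets the rest of the space only through it), that $\{p,q\}$ is a genuine cut pair of $\decompw$; its image $\{\augmap(p),\augmap(q)\}$ then contains the cut point $\augmap(S_i)$ and so is neither a cut point nor a cut pair in the strict sense the paper reserves for that term. (Your counting idea via \fullref{lemma:connectedhull} does show the number of complementary components of $S_i$ must then be exactly two, but it cannot exclude that case.) The honest repair is to weaken the forward conclusion to ``$\augmap(S)$ is a cut set with at most two points,'' which is what the proof actually gives and is all that is ever used: the backward half applies the forward half only to $S=S_i$ itself, where the issue is vacuous. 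With that adjustment your argument is complete and is the same as the paper's; without it, neither your proposal nor the paper establishes the statement exactly as phrased.
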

\begin{proof}
$\augmap$ identifies points of uncrossed cut pairs, so $\augmap(S)$
fails to be a cut set only if some uncrossed cut pair crosses $S$.
That is impossible, since by \fullref{lemma:precisely2components}
crossing is a symmetric relation.

Conversely, let $R$ be a cut point or cut pair of
  $\decomp_{\augw}$. 
It is clear that $\augmap^{-1}(R)$
is a cut set in $\decompw$; we just need to show that it consists of at most two points. 

Suppose a point $r\in R$ has preimage $\augmap^{-1}(r)$ consisting of
two points. 
Then $r$ is stabilized by a conjugate of an element $h\in \augw\setminus\multimot$,
so $\augmap^{-1}(r)=S_i$ for some $i$. 
By the first part of the lemma, $\augmap(S_i)=\{r\}$ is a cut point,
so $R=\{r\}$, and $\augmap^{-1}(R)=S_i$ is a cut pair.
\end{proof}

\begin{lemma}\label{lemma:prerefinement}
Let $G$ be a non-cyclic
  vertex group of $\Gamma$.
Let $g$ be an element of $G$ such that 
$\qmap_{\indg}(\{\inv{g}^\infty,g^\infty\})$ is a cut point
or uncrossed cut pair in $\decomp_{\indg}$. 
Then $\qmap_{\multimot}(\{\inv{g}^\infty,g^\infty\})$ is a cut
point or uncrossed cut pair, respectively, of $\decompw$ that is not
in the uncrossed collection.
\end{lemma}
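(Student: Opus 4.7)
The plan is to use the Structure Theorem (\autoref{theorem:structure}) to work in $\decomp_{\augw}$ as an intermediary between $\decomp_{\indg}$ and $\decompw$. By \autoref{lemma:embed}, $\decomp_{\indg}$ embeds in $\decomp_{\augw}$ as the preimage under $\pi_\Gamma$ of the closed star of the Type 2 vertex corresponding to $G$; the remaining vertex pieces attach to this copy of $\decomp_{\indg}$ only at the Type 1 cut points $c_j$, which correspond to the $S_i$ and are cut points of both $\decomp_{\indg}$ and $\decomp_{\augw}$.

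Set $R_v := \qmap_{\indg}(\{\inv{g}^\infty,g^\infty\})$ and $R_F := \qmap_{\multimot}(\{\inv{g}^\infty,g^\infty\})$. I will first verify that $R_v$, viewed inside $\decomp_{\augw}$, remains a cut set of the same type as in $\decomp_{\indg}$: because $R_v$ is a cut point or cut pair there, its points are not cut points of $\decomp_{\indg}$ and hence differ from every $c_j$, so each attached piece lies entirely in one complementary component of $R_v$, preserving both the cut structure and the minimality. The reverse direction of \autoref{lemma:cutsarecutsinaug} will then identify $R_F = \augmap^{-1}(R_v)$ as a cut point or cut pair of $\decompw$. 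In the cut-pair-in-$\decomp_{\indg}$ case, $g$ is not conjugate in $G$ to any element of $\indg$---in particular, not to any edge generator of $\Gamma$---so $R_F$ is a genuine pair not equal to any $S_i$, automatically yielding both the type and the ``not in the collection'' conclusions. In the cut-point case, the requirement that $R_F$ lie outside the uncrossed collection forces $g$ to correspond to a $\multimot$-element rather than an edge generator (the latter would make $R_F$ coincide with some $S_i$), and so $R_F$ is again a cut point of the desired type.

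To complete the cut-pair case I must show $R_F$ is uncrossed. I will argue by contradiction: suppose some cut pair $R''$ of $\decompw$ crosses $R_F$. If $R''$ were some $S_i$ in the uncrossed collection, then by uncrossedness of $S_i$ and the symmetry of the crossing relation for minimal cut pairs, $R''$ could not cross $R_F$; hence $\augmap(R'')$ must be a cut pair of $\decomp_{\augw}$. For the same reason every $S_i$ sits in a single complementary component of $R_F$, and so the identifications performed by $\augmap$ are never across $R_F$; consequently $\augmap(R'')$ crosses $R_v$ in $\decomp_{\augw}$. The main technical obstacle is to transfer this crossing back down to $\decomp_{\indg}$: I claim that any cut pair of $\decomp_{\augw}$ crossing $R_v$ must actually have both endpoints in $\decomp_{\indg}$. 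If a point $x$ lay in an attached piece $X\setminus\{c\}$, then $x$ would not be a cut point of $X$ (else $x$ would be a cut point of $\decomp_{\augw}$), so $X\setminus\{x\}$ would remain connected to the rest through $c$; a case analysis on the second point then shows that the alleged cut pair cannot actually disconnect $\decomp_{\augw}$. Ruling out this case leaves both points of $\augmap(R'')$ in $\decomp_{\indg}$, producing a cut pair of $\decomp_{\indg}$ that crosses $R_v$ and contradicting the assumed uncrossedness of $R_v$ there.
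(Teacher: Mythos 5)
Your cut-pair case is essentially workable, but the cut-point case of the conclusion ``not in the uncrossed collection'' is a genuine gap, and it is exactly the point where the paper's proof does real work. What has to be ruled out is that $g$ is (up to conjugacy in $G$ and commensurability) an incident edge generator, i.e.\ that $R_v=\qmap_{\indg}(\{\inv{g}^\infty,g^\infty\})$ is one of the attaching points $c_j$: if $R_F$ equals some $S_i$, then $g$ stabilizes $S_i$, and since $g$ also fixes the $G$--vertex of $\mathcal{D}(\Gamma)$ it lies in an incident edge group, so this is the only way the conclusion can fail. In the cut-pair case you do get this from $g\notin\indg$, as you say (two elements of a free group whose axes share an endpoint are commensurable). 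But in the cut-point case your sentence ``the requirement that $R_F$ lie outside the uncrossed collection forces $g$ to correspond to a $\multimot$--element rather than an edge generator'' is circular: that requirement is the conclusion to be proved, not a hypothesis. Nothing in your proposal excludes the possibility that the image point $c_j$ of an incident cut set is itself a cut point of $\decomp_{\indg}$, in which case your $R_F$ would be the corresponding $S_j$ of the collection. Your first-paragraph justification that $R_v$ avoids the $c_j$ --- ``the $c_j$ are cut points of $\decomp_{\indg}$, and points of $R_v$ are not'' --- does not repair this: it is unjustified (and false in general: for a QH vertex $\decomp_{\indg}$ is a circle and has no cut points), and in the cut-point case the point of $R_v$ \emph{is} a cut point, so the comparison says nothing. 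Since this disjointness also underlies your step that each attached piece lies in a single complementary component of $R_v$ and your appeal to \fullref{lemma:cutsarecutsinaug}, the cut-point half of both the ``same type'' and the ``not in the collection'' assertions is unproven.

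The paper closes this by an argument you do not have: it shows that each complementary component of $R_v$ in $\decomp_{\indg}$ contains images of cut sets $S_j$ attached to the $G$--vertex, so that $R_F$ separates two cut sets belonging to the same Type 2 vertex; by the minimality built into the construction of Type 2 vertices in \fullref{proposition:multipleotal}, no member of an uncrossed collection separates two cut sets of a common Type 2 vertex, hence $R_F$ is not any $S_i$ (in particular $R_v$ is not any $c_j$), and the cardinality/type statement then follows. With that in hand, your transfer through $\decomp_{\augw}$ and your final paragraph on uncrossedness are essentially the paper's own argument (the paper phrases the ``both points lie in the $G$--piece'' step via $\pi_\Gamma$ and an $S_i$ separating a point of the crossing pair from $R_F$), and are acceptable as a sketch; the missing idea is the separation argument above.
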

\begin{proof}
There is a Type 2 vertex $\{v_j\}_{j\in J}$ of $\mathrm{BS}(\Gamma)$ corresponding to
$G$. 
For each $j\in J$, the corresponding cut set $S_j$ becomes a point in $\decomp_{\indg}$, and each complementary component of the image of
$\qmap_{\indg}(\{\inv{g}^\infty,g^\infty\})$ in $\decomp_{\indg}$ 
contains some of these $S_j$ points, so $\augmap^{-1}(
\qmap_{\indg}(\{\inv{g}^\infty,g^\infty\}))= \qmap_{\multimot}(\{\inv{g}^\infty,g^\infty\})$ is a cut point or cut
pair in $\decompw$ separating some of these $S_j$.
The Type 2 vertices  are subsets $\{v_j\}_{j\in J}$ such
that for $j_0,j_1\in J$ there is no $i\in I$ such that $S_i$ separates
$S_{j_0}$ from $S_{j_1}$.
Thus, 
$\qmap_{\multimot}(\{\inv{g}^\infty,g^\infty\})$ is a cut set that is
not in $\{S_i\}_{i\in I}$.

Since $\qmap_{\multimot}(\{\inv{g}^\infty,g^\infty\})$ is not one of the $S_i$, its
cardinality is the same as that of the image in $\decomp_{\indg}$.
Thus, they are either both cut points or both cut pairs. 

It remains only to show that if $\qmap_{\indg}(\{\inv{g}^\infty,g^\infty\})$ is
an uncrossed cut pair then $\qmap_{\multimot}(\{\inv{g}^\infty,g^\infty\})$ is uncrossed.
Suppose $\qmap_{\multimot}(\{\inv{g}^\infty,g^\infty\})$ is crossed
by some other cut pair $R$ of $\decomp_{\multimot}$. 
$R$ cannot cross any of the $S_i$, so $R\subset\qmap(\bdry G)$, which
implies $\augmap(R)$ and $\augmap(\qmap_{\multimot}(\{\inv{g}^\infty,g^\infty\}))$ are
crossing cut pairs in the embedded copy of $\decomp_{\indg}$ in $\decomp_{\augw}$.
Thus we get crossing cut pairs of $\decomp_{\indg}$, contrary to
hypothesis.
\end{proof}

\begin{lemma}[Refinement Lemma]\label{lemma:refinement}
  Let $G$ be a non-cyclic vertex of $\Gamma$.
If $\decomp_{\indg}$ is neither a circle nor rigid then there is a
refinement $\Gamma'$ of $\Gamma$ obtained by splitting $G$ rel $\indg$.
This is a splitting over an uncrossed collection
containing $\{S_i\}_{i\in I}$.
\end{lemma}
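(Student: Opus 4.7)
The strategy is to pull back a cut set from $\decomp_{\indg}$ to $\decompw$ and adjoin its $F$-orbit to the uncrossed collection, then apply \fullref{proposition:multipleotal} to the enlarged collection.

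First, $\decomp_{\indg}$ is connected by \fullref{lemma:connectedpieces}, and by hypothesis is neither rigid nor a circle, so \fullref{corollary:uncrossedexist} furnishes a cut point or uncrossed cut pair in $\decomp_{\indg}$. Such a cut set is rational: cut points take the form $\qmap_{\indg}(\{\inv{g}^\infty, g^\infty\})$ by \fullref{lemma:cutpointsareinmultiword} applied inside $G$, and uncrossed cut pairs do likewise by \fullref{proposition:periodic}. By \fullref{lemma:prerefinement}, the image $\qmap_{\multimot}(\{\inv{g}^\infty, g^\infty\})$ is a cut point (respectively, uncrossed cut pair) of $\decompw$ that does not lie in $\{S_i\}_{i \in I}$.

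Let $\{T_j\}_{j \in J}$ be the $F$-orbit of this new cut set and set $\mathcal{C}' = \{S_i\}_{i \in I} \cup \{T_j\}_{j \in J}$. Because each $T_j$ is either a cut point (which cannot be crossed by any cut pair) or an uncrossed cut pair, $\mathcal{C}'$ remains pairwise non-crossing; by \fullref{proposition:boundedperiodicity} it comprises finitely many $F$-orbits. Hence $\mathcal{C}'$ is an uncrossed collection, and \fullref{proposition:multipleotal} applied to $\mathcal{C}'$ produces a graph of groups decomposition $\Gamma'$ of $F$ rel $\multimot$ with cyclic edge groups and Bass-Serre tree $\mathcal{D}(\Gamma')$.

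It remains to verify that $\Gamma'$ refines $\Gamma$ by splitting $G$. Because $\mathcal{C}' \supset \{S_i\}$, the construction in \fullref{proposition:multipleotal} yields a natural $F$-equivariant simplicial surjection $\mathcal{D}(\Gamma') \to \mathcal{D}(\Gamma)$ that sends each Type 1 vertex of $\mathcal{D}(\Gamma')$ indexed by a $T_j$ into the Type 2 vertex of $\mathcal{D}(\Gamma)$ whose closed star contains $\pi_\Gamma(T_j)$; this containment was verified inside the proof of \fullref{lemma:prerefinement}. Collapsing the preimage of each Type 2 vertex recovers $\mathcal{D}(\Gamma)$, so $\Gamma'$ is a refinement. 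I expect the one subtle point to be matching the induced subdivision of the $G$-vertex in $\Gamma'$ with the graph of groups decomposition of $G$ rel $\indg$ produced by applying \fullref{proposition:multipleotal} internally to $G$ on the corresponding cut sets of $\decomp_{\indg}$. This reduces, via the embedding $\decomp_{\indg} \hookrightarrow \decomp_{\augw}$ of \fullref{lemma:embed} together with \fullref{lemma:uncrossedsplittingsareuniversal}, to checking that the stabilizer of a $T_j$ whose projection lies in the $G$-star is a subgroup of $G$ acting on the relevant sub-Bass-Serre tree of $\mathcal{D}(\Gamma')$ precisely as in the internal construction.
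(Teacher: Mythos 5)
Your proof takes essentially the same approach as the paper: the paper cites \fullref{mainlemma} directly where you unroll it into \fullref{corollary:uncrossedexist}, \fullref{lemma:cutpointsareinmultiword}, and \fullref{proposition:periodic}, and then both proofs invoke \fullref{lemma:prerefinement} to push the cut set into $\decompw$ and \fullref{proposition:multipleotal} on the enlarged uncrossed collection. Your final paragraph spells out the refinement/collapsing step that the paper leaves implicit; the core argument is identical.
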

\begin{proof}
By \fullref{lemma:connectedpieces}, $\decomp_{\indg}$ is connected. If
it is not a circle and not rigid, then by
  \fullref{mainlemma} there is an element $g\in G$ such that
  $\qmap_{\indg}(\{\inv{g}^\infty, g^\infty\})$ is a cut point or
  uncrossed cut pair that is not in the uncrossed collection.
By \fullref{lemma:prerefinement}, $\qmap_{\multimot}(\{\inv{g}^\infty, g^\infty\})$ is a cut point or
  uncrossed cut pair of $\decompw$.
Thus, we can add the orbit of $\qmap_{\multimot}(\{\inv{g}^\infty,
g^\infty\})$ to the set $\{S_i\}_{i\in I}$ to get a larger uncrossed
collection, and hence a graph of groups decomposition $\Gamma'$
refining $\Gamma$.
\end{proof}

\subsection{The Decomposition Theorem}\label{sec:proofofdecomp}

\begin{theorem}[\decompthm]\label{theorem:JSJ}\hypertarget{decompthmtarget}
  There exists a
  canonical relative JSJ--decomposition (rJSJ), a graph of groups
  decomposition $\Gamma$ of $\F$ relative to $\multimot$ with cyclic
  edge groups,
  satisfying the following conditions:
  \begin{enumerate}[(1)]
\item If there is more than one vertex, the graph is bipartite. Cyclic vertex groups are adjacent only
  to non-cyclic vertex groups, and vice-versa. Furthermore, if $G$ is
  a non-cyclic vertex group the incident edge groups map onto
  $G$--maximal cyclic subgroups of $G$ in distinct $G$--conjugacy
  classes. Finally, the sum of the degrees of the edge inclusions at
  any cyclic vertex group is at least 2.\label{item:bipartite}
\item $\Gamma$ is universal: if $\F$ splits over a cyclic subgroup relative to $\multimot$ then the
cyclic subgroup is conjugate into one of the vertex groups.\label{item:universal}
\item $\Gamma$ is maximal: it cannot be refined and still satisfy
  these conditions.\label{item:maximal}
  \end{enumerate}

Moreover, the rJSJ is characterized by splitting $\F$ over the stabilizers of cut points
  and uncrossed cut pairs in $\decompw$.
There are three mutually exclusive possibilities:
\begin{enumerate}[(a)]
\item $(\F,\multimot)$ is rigid. $\decompw$ has no cut points or cut
  pairs. The rJSJ is
  trivial.
\item $(\F,\multimot)$ is a QH--surface. $\decompw$ is a circle. The rJSJ is
  trivial.
\item The rJSJ is nontrivial. For every non-cyclic vertex group $G$ we
  have that $(G, \indg)$ is either
  rigid or a QH--surface. Stabilizers of cut points and uncrossed cut
  pairs are conjugate to the cyclic vertex groups.\label{item:basicpieces}
\end{enumerate}
 Consequently,  if $\F$ splits over $\langle g\rangle$ relative to
 $\multimot$ then $\langle g\rangle$ is conjugate into the stabilizer of one of the cyclic vertices or
one of the QH--surface vertices of the rJSJ.
\end{theorem}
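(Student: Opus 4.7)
The plan is to construct $\Gamma$ in one shot from the full topological structure of $\decompw$ rather than iteratively refining. By \fullref{proposition:boundedperiodicity}, the collection $\{S_i\}_{i\in I}$ consisting of \emph{all} $F$-orbits of cut points and uncrossed cut pairs of $\decompw$ is a finite union of orbits, hence an uncrossed collection. First I would dispatch the trivial cases: if $\{S_i\}_{i\in I}$ is empty, then either $\decompw$ has no cut pairs whatsoever (case (a), rigid) or every cut pair is crossed, in which case \fullref{proposition:circle} together with \fullref{lemma:circles} force $\decompw$ to be a circle (case (b), QH-surface), with trivial rJSJ in both situations. Otherwise, I would feed $\{S_i\}_{i\in I}$ to \fullref{proposition:multipleotal} to obtain a cyclic graph of groups decomposition and then run it through the bipartite normalization of \fullref{sec:normalization}; since normalization leaves the non-cyclic vertex groups intact, the resulting $\Gamma$ satisfies condition (1) by construction.

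For universality (2), I would take any splitting of $F$ over $\left<g\right>$ rel $\multimot$ and invoke \fullref{lemma:splitimpliescut}: the image $S := \qmap(\{\inv{g}^\infty,g^\infty\})$ is a cut point or cut pair. If $S$ is a cut point or an uncrossed cut pair, then $S$ lies in $\{S_i\}_{i\in I}$, and \fullref{lemma:uncrossedsplittingsareuniversal} places $\left<g\right>$ inside a vertex group. If $S$ is a crossed cut pair, one needs to argue that the equivalence class of $S$ under the circle relation from \fullref{proposition:circle} sits inside the closed star of a single Type 2 vertex of $\mathcal{D}(\Gamma)$ via the quasi-map $\pi_\Gamma$ of \fullref{lemma:projection}, so that $\left<g\right>$ is again conjugate into the corresponding vertex group.

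For maximality (3) and the structural statement (c), I would argue by contradiction: suppose some non-cyclic vertex group $G$ of $\Gamma$ is neither rigid nor a QH-surface. By \fullref{lemma:connectedpieces}, $\decomp_{\indg}$ is connected, so \fullref{mainlemma} applied to $(G, \indg)$ supplies a cut point or uncrossed cut pair $R$ in $\decomp_{\indg}$. Then \fullref{lemma:prerefinement} promotes $R$ to a cut point or uncrossed cut pair of $\decompw$ that is \emph{not} in $\{S_i\}_{i\in I}$, contradicting the fact that our collection exhausts every cut point and uncrossed cut pair of $\decompw$. Hence every non-cyclic vertex is rigid or a QH-surface, establishing (c), and maximality then follows directly from \fullref{lemma:refinement}, since any strict refinement would demand exactly such a missing cut set. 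Canonicity is immediate because $\{S_i\}_{i\in I}$ is intrinsic to the topology of $\decompw$ and the normalization procedure is algorithmic.

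The main obstacle I anticipate is the crossed-cut-pair case in the universality argument, where one must show that a cluster of mutually crossing cut pairs collapses into a single QH-surface vertex rather than straddling several Type 2 pieces. I expect this to follow by pushing $S$ forward via $\augmap$ and using \fullref{lemma:cutsarecutsinaug} to see that a crossed cut pair of $\decompw$ restricts to a crossed cut pair inside a copy of $\decomp_{\indg}$ embedded in $\decomp_{\augw}$ (as in \fullref{lemma:embed}); then \fullref{proposition:circle} and \fullref{lemma:circles} force that vertex to be a QH-surface, and the generator $g$ lands in the stabilizer of a boundary-component subgroup of that vertex.
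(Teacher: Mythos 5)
Your construction is the same as the paper's (take \emph{all} cut points and uncrossed cut pairs, feed them to \fullref{proposition:multipleotal}, and use \fullref{lemma:uncrossedsplittingsareuniversal}, \fullref{lemma:prerefinement} and \fullref{lemma:refinement} for universality and for the rigid/QH dichotomy), but two steps are genuinely incomplete. First, your justification of maximality is wrong as stated: you claim any strict refinement of $\Gamma$ ``would demand exactly such a missing cut set,'' but a QH--surface vertex group other than the three-holed sphere \emph{does} split relative to its induced multiword (along crossed cut pairs), so refinements of $\Gamma$ exist. The point, as in the paper, is that conditions (2) and (c) together imply (3): any refinement must split a QH--surface vertex, and since every essential non-boundary-parallel simple closed curve on such a surface is crossed by another, the refined decomposition admits an incompatible relative splitting and hence fails universality. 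Maximality cannot be read off from \fullref{lemma:refinement} alone. A smaller remark on universality: the crossed case needs no separate treatment, since \fullref{lemma:uncrossedsplittingsareuniversal} already asserts ellipticity of the stabilizer of \emph{any} cut point or cut pair (its proof only uses that the $S_i$ are uncrossed); your proposed detour through $\augmap$ and \fullref{lemma:cutsarecutsinaug} is unnecessary.

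Second, the theorem says the rJSJ is \emph{characterized} by the cut points and uncrossed cut pairs, and the paper spends roughly half its proof establishing this uniqueness: given any $\Gamma'$ satisfying (1)--(3), one shows that each cyclic vertex group of $\Gamma'$ stabilizes a cut point or an uncrossed cut pair of $\decompw$ --- using the degree count from condition (1), \fullref{lemma:rigidboundary} when an adjacent non-cyclic vertex is rigid, and, in the remaining degree-two cases, the argument that gluing two QH--surfaces (or attaching a M\"obius strip) along the cyclic vertex produces a crossing curve contradicting universality --- and then that maximality forces these cyclic vertices to account for all cut points and uncrossed cut pairs, so $\mathcal{D}(\Gamma')$ is equivariantly isomorphic to $\mathcal{D}(\Gamma)$. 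Your closing sentence that ``canonicity is immediate'' only observes that your particular construction involves no choices; it does not show that an arbitrary decomposition satisfying (1)--(3) coincides with it, which is the substance of the characterization claim.
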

\begin{remark}
  Conditions (\ref{item:universal}) and (\ref{item:maximal}) are
  standard requirements for a JSJ decomposition.
In general there is not a canonical JSJ decomposition satisfying these
condition, but a whole deformation space of JSJ decompositions
\cite{GuiLev09}.
A particular JSJ decomposition can be chosen from this deformation
space by applying the normalizations from \fullref{sec:normalization}.
Condition (\ref{item:bipartite}) says these normalizations have been performed.
\end{remark}
\begin{proof}
If $(\F,\multimot)$ is rigid or is a three-holed sphere then there are no splittings rel $\multimot$. The rJSJ is
trivial, and we are done. 

If $(\F,\multimot)$ is a QH--surface other than a three-holed sphere then splittings rel $\multimot$ come from
essential, non-peripheral simple closed curves on the
surface. For any such curve we can find another intersecting it,
giving us an incompatible splitting. 
Therefore, no such splitting is universal, so the rJSJ is trivial, and
we are done.

If we are not in either of these cases then by
\fullref{corollary:uncrossedexist} there
exists a cut point or an uncrossed cut pair in $\decompw$.

Take the uncrossed collection $\{S_i\}_{i\in I}$ consisting of all cut points and uncrossed
cut pairs, and let $\Gamma$ be the graph of groups provided by \fullref{proposition:multipleotal}.

The tree $\mathrm{BS}(\Gamma)$ is canonically defined by the
topology of the decomposition space, and the $\F$--action is induced by
the $\F$--action on $\decomp$, so the resulting graph of groups decomposition is canonical. 

We will show $\Gamma$ satisfies conditions (\ref{item:bipartite})-(\ref{item:maximal}).
Conversely, we will show that any
 $\Gamma'$ satisfying conditions (\ref{item:bipartite})-(\ref{item:maximal}) has Bass-Serre
tree $\mathrm{BS}(\Gamma')$ equivariantly isomorphic to
$\mathrm{BS}(\Gamma)$, so $\Gamma$ and $\Gamma'$ are equivalent graph
of groups decompositions.

Condition (\ref{item:bipartite}) says that the graph of groups is
normalized as in \fullref{sec:normalization}.
Using the facts that $\F$ is free and that the cyclic vertex groups of $\Gamma$ are maximal
cyclic subgroups of $\F$ it is easy to see that $\Gamma$ satisfies
these conditions.

Uncrossed splittings are universal by
\fullref{lemma:uncrossedsplittingsareuniversal}, so $\Gamma$ satisfies
condition (\ref{item:universal}).

If for some non-cyclic vertex $G$ of $\Gamma$ the pair
$(G,\indg)$ is neither rigid nor a QH--surface
then by \fullref{lemma:refinement} there is a refinement of $\Gamma$
coming from a larger uncrossed collection.
This is absurd; we have already included all cut points and uncrossed
cut pairs in our uncrossed collection.
Together with condition (\ref{item:universal}) this implies
that any refinement of $\Gamma$ must come from splitting a QH--surface vertex
group. 
The resulting splitting would not be universal, because if
there is a way to split the surface then there
is always an incompatible way to split it.
Thus, condition (\ref{item:maximal}) is satisfied.

Now suppose $\Gamma'$ is another graph of groups decomposition of $\F$ rel
$\multimot$ satisfying conditions (\ref{item:bipartite})-(\ref{item:maximal}). 
Condition (\ref{item:basicpieces}) must be satisfied or else it
would be possible to refine $\Gamma'$ in a universal way,
contradicting maximality.

Consider a cyclic vertex group $\left< g\right>$ of $\Gamma'$.
We would like to show $S=\qmap(\{\inv{g}^\infty,g^\infty\})$ is a cut
point or uncrossed cut pair in $\decompw$.
The number of components of
$\decompw\setminus S$ is equal to
the sum of the degrees of the edge maps into the vertex group.
By condition (\ref{item:bipartite}), this is at least two, so
$S$ is a cut point if $g\in\multimot$ or cut pair otherwise. 

If the sum of the degrees of the
edge maps is greater than two, or if the sum of the degrees is equal to two and one of the adjacent non-cyclic
vertices is rigid, then $S$ is an uncrossed cut pair, by \fullref{lemma:rigidboundary}.

Otherwise, either the vertex separates two QH-surfaces glued along boundary
curves or it is adjacent to one QH--surface and the edge maps into the
cyclic vertex group with degree 2. 
In the first case, the cyclic vertex can be removed by gluing together
the two QH--surfaces to give a
larger QH--surface. 
In the second case, the cyclic vertex can be removed by gluing a
M\"obius strip to the corresponding QH--surface along their boundary curves.
In either case, the new surface contains an
essential, non-peripheral simple closed curve that intersects the curve we just glued
along. This would provide a splitting of $\F$ rel $\multimot$
incompatible with $\Gamma'$, contradicting universality. 
Thus, each
cyclic vertex group of $\Gamma'$ is the stabilizer of a cut point or
uncrossed cut pair.
Furthermore, the cyclic vertex groups account for all the cut points
and uncrossed cut pairs, since $\Gamma'$ is maximal.

$\mathrm{BS}(\Gamma)$ and $\mathrm{BS}(\Gamma')$ are both equivariantly
isomorphic to the tree constructed in
\fullref{proposition:multipleotal}, hence, to each other.
\end{proof}

\section{Virtually Geometric Multiwords}\label{sec:vg}
A \emph{handlebody} is a 3--manifold obtained by gluing 1--handles
to a 3--ball.
These are commonly imagined as thickened graphs, although we do not
assume orientability.

A multiword $\underline{w}=\{w_1,\dots, w_k\}$ in $\F=\F_n$ is \emph{geometric} if there exists a handlebody $H$ with fundamental group $\F$ such that the conjugacy classes of the $w_i$ can be represented by an embedded multicurve in the boundary of $H$.
The multiword is \emph{virtually geometric} if it becomes geometric
upon lifting to a finite index subgroup of $\F$. (Recall \fullref{definition:lift})

\begin{example}\label{ex:bs21}
  $\multimot=\{\inv{a}^2\inv{b}ab\}$ in $\F_2=\left<a,b\right>$

\begin{figure}[h!]
\begin{minipage}[b]{0.465\textwidth}
\labellist
\small
\pinlabel $a$ [c] at 30 102
\pinlabel $\inv{a}$ [c] at 30 28
\pinlabel $\inv{b}$ [c] at 147 102
\pinlabel $b$ [c] at 147 28
\tiny
\pinlabel $2$ at 34 111.5
\pinlabel $4$ at 40 106.5
\pinlabel $1$ at 40 97.5
\pinlabel $2$ at 34 39
\pinlabel $4$ at 40 33
\pinlabel $1$ at 40 24
\pinlabel $5$ at 140 110
\pinlabel $3$ at 136 101.5
\pinlabel $3$ at 136 28.5
\pinlabel $5$ at 140 20
\endlabellist
  \centering
  \includegraphics[height=1.4in]{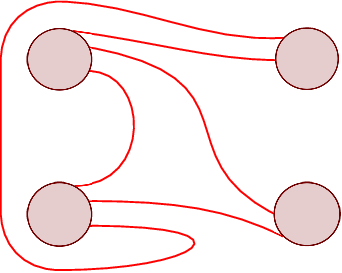}
  \caption{$\Wh(\bp)\{\inv{a}^2\inv{b}ab\}$}
  \label{fig:bs12heegaard}
\end{minipage}
\hfill
\begin{minipage}[b]{0.525\textwidth}
\labellist
\small
\pinlabel $a$ [b] at 14 122
\pinlabel $\inv{a}$ [t] at 14 1
\pinlabel $b$ [l] at 179 60
\endlabellist
  \centering
\includegraphics[height=1.2in]{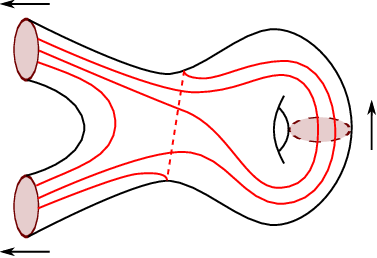}
  \caption{Corresponding handlebody for $\inv{a}^2\inv{b}ab$}
  \label{fig:bs12handlebody}
\end{minipage}
\end{figure}

In \fullref{fig:bs12heegaard} we have a Whitehead graph with vertices blown up to discs and a
numbering around each vertex.
Following the edges according to the numbering reads off the word $\inv{a}^2\inv{b}ab$.
Embed this graph on the surface of a three-ball.

The numbering around the $b$--disc can be read $3-5$ going counterclockwise,
and the numbering around the $\inv{b}$--disc can be read $3-5$ going
clockwise, so we can glue these discs together, matching the
numbering, to make an orientable $b$--handle.

The numbering around both the $a$--disc and $\inv{a}$--disc is
$1-4-2$ going counterclockwise.
We can glue these discs together and match the numbering, but we
must do it in a non-orientable way.
In drawing the corresponding handlebody in
\fullref{fig:bs12handlebody} we leave the $a$--disc and $\inv{a}$--disc
apart, but one should imagine that they have been identified to create
a non-orientable $a$--handle.
\end{example}

Similarly, any multiword with a Whitehead graph that is planar and
valence at most three is geometric. The same argument shows, more generally:

\begin{proposition}[\cite{Cis65}]\label{proposition:gooddiagramimpliesgeometric}
  If there exists a Whitehead graph of $\multimot$ with a planar embedding such
  that the cyclic orderings of edges incident to inverse vertices are
  consistent (either the same or opposite), then $\multimot$ is geometric.
\end{proposition}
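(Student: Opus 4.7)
The plan is to formalize the constructions shown in the two examples preceding the statement. Start with a planar Whitehead graph $W=\Wh_{\mathcal{B}}(*)\{\multimot\}$ satisfying the consistency hypothesis, and embed $W$ on the boundary sphere $S^2$ of a $3$--ball $B$. Thicken each vertex of $W$ into a small closed disc so that the edges of $W$ become disjoint arcs on $S^2$ with endpoints on the boundaries of these discs; the planar embedding induces a cyclic ordering of these endpoints on the boundary of each disc.

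For each generator $a\in\mathcal{B}$, attach a $1$--handle joining the disc $D_a$ to the disc $D_{\inv{a}}$. The consistency hypothesis produces a cyclic-order-compatible bijection between the edge endpoints on $\bdry D_a$ and those on $\bdry D_{\inv{a}}$: pair each endpoint of an edge $e$ at $D_a$ with the other endpoint of $e$ at $D_{\inv{a}}$. This bijection is realized by a choice of attaching map for the $1$--handle, orientable when the two cyclic orderings are opposite (as in \fullref{fig:bs11heegaard}) and non-orientable when they agree (as in \fullref{ex:bs21}).

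After attaching one $1$--handle per generator, the resulting $3$--manifold $H$ is a handlebody with $\pi_1(H)\cong F_n$, the generator $a$ corresponding to a loop threading the $a$--handle. Each edge of $W$ extends uniquely across its associated $1$--handle to a simple closed curve on $\bdry H$; disjointness of these curves is inherited from disjointness of the edges in $S^2$, together with the cyclic-order-respecting choice of attaching map. Reading off the labels of the vertex discs traversed in order by one such closed curve recovers, by the very definition of the Whitehead graph (one edge from $x$ to $y$ per occurrence of $\inv{x}y$ as a cyclic subword of some $w_i$), a cyclic conjugate of some $w_i\in\multimot$. Hence the multiword $\multimot$ is realized by an embedded multicurve on $\bdry H$, and so it is geometric.

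The main obstacle is bookkeeping: tracking the cyclic orderings at each pair of inverse discs, and verifying that the consistency hypothesis is precisely what is needed to attach the $1$--handles while preserving the pairing of edge endpoints. Once this is set up correctly, orientability of the handlebody plays no role in the argument, which is why the non-orientable construction of \fullref{ex:bs21} is permitted under the definition of handlebody used here.
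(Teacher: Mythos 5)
Your general approach—blowing up vertices into discs, attaching $1$--handles joining inverse disc pairs, and extending the Whitehead graph's edges to a multicurve—matches the paper's proof. The gap is in your description of the attaching map. You claim the bijection between edge-endpoints on $\bdry D_a$ and on $\bdry D_{\inv{a}}$ is to ``pair each endpoint of an edge $e$ at $D_a$ with the other endpoint of $e$ at $D_{\inv{a}}$.'' This does not make sense in general: an edge $e$ with one endpoint at $D_a$ has its other endpoint at $D_y$ where $y$ is the other label of that Whitehead-graph edge, and typically $y\neq\inv{a}$. In the paper's example $\Wh(*)\{\inv{a}\inv{b}ab\}$ the two edges at the vertex $a$ run to $\inv{b}$ and to $b$, so no edge has an endpoint at both $D_a$ and $D_{\inv{a}}$, and your proposed pairing is vacuous.

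The correct pairing is dictated by the cyclic structure of the words, not by the abstract graph: an edge-end at $a$ coming from the $i$-th cyclic position of some $w\in\multimot$ is paired with the edge-end at $\inv{a}$ coming from the adjacent cyclic position $i\pm 1$ of the same $w$. This is exactly the splicing identification used to assemble copies of $\Wh(*)$ into Whitehead graphs over larger subtrees, and it is this word-determined pairing that the consistency hypothesis requires to be compatible with the planar cyclic orderings at $a$ and at $\inv{a}$. Without fixing the correct pairing, your last step---``reading off the labels of the vertex discs traversed\ldots recovers a cyclic conjugate of some $w_i$''---does not follow: the Whitehead graph records only the multiset of length-two cyclic subwords of $\multimot$, and a cyclic-order-compatible but otherwise arbitrary gluing can produce a multicurve realizing a different multiword with the same Whitehead graph. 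To close the gap, use the word-determined pairing and verify that the consistency hypothesis is precisely the condition that makes that pairing cyclic-order-preserving when the graph is embedded in the plane.
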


In fact, more is true. 
If a multiword is geometric then every minimal Whitehead graph of
$\multimot$ has
 a planar embedding such
  that the cyclic orderings of edges incident to inverse vertices are
  consistent.

Thus, there is an algorithm to determine geometricity:
The Whitehead graph is finite, so it is possible to check by brute
force to see if
there exists a planar embedding that
respects cyclic ordering around the vertices.

These claims follow from work of  Zieschang \cite{Cis65} (see also Berge's
{\itshape Documentation for the program Heegaard} \cite{Berge}).

There is a positive algorithm for checking virtual geometricity of
$\multimot$ by
enumerating subgroups of successively larger index, computing the lift
of $\multimot$, and then checking for geometricity. 
There is not an obvious bound on how large an index is necessary for
a given multiword, so there is not a corresponding negative algorithm.
Gordon and Wilton \cite{GorWil10} even asked whether every one element
multiword is virtually geometric.
Manning \cite{Man10} answered in the negative by showing that the word
$w=bbaaccabc$ in
$\F_3=\left<a,b,c\right>$ is not virtually geometric.

Otal \cite{Ota92} notes that a free splitting of the free group corresponds to a
connected sum of the corresponding handlebodies, so, as usual, we will
confine our attention to the case that $\F$ does not split freely rel $\multimot$.

\subsection{Rigid Multiwords and Geometricity}{\label{sec:rigidgeometric}
\begin{lemma}[{\cite[Proposition 0]{Ota92}}]\label{lemma:geometricimpliesplanar}
If $\multimot$ is geometric then $\decomp$ is planar.
\end{lemma}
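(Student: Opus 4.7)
The plan is to realize the decomposition space as the image of a $2$--sphere under a quotient that collapses a disjoint collection of arcs, so that $\decomp_{\multimot}$ is automatically planar. By hypothesis we may realize $\multimot$ as an embedded multicurve $\Lambda$ in the boundary of a handlebody $H$ with $\pi_1(H)=F$. Let $\tilde H\to H$ be the universal cover and $\Sigma=\partial\tilde H$ its boundary, a connected noncompact surface on which $F$ acts freely and cocompactly, equivariantly quasi-isometric to $\tree$.

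The key topological input is that $\Sigma$ compactifies to a $2$--sphere $\bar\Sigma$ by adjoining $\partial F$. The moral reason is that $\tilde H$ deformation retracts onto the Cayley tree $\tree$ of $F$; any locally finite tree embeds in $\mathbb{R}^2\subset\mathbb{R}^3$, and $\tilde H$ is homeomorphic to a regular $\mathbb{R}^3$--neighborhood of such an embedding, so $\Sigma$ is a properly embedded planar surface in $\mathbb{R}^3$. The ends of $\Sigma$ are in natural $F$--equivariant bijection with $\partial\tree=\partial F$, and adjoining one point per end produces a $2$--sphere $\bar\Sigma$. Lifting $\Lambda$ to $\Sigma$ yields a disjoint collection $\tilde\Lambda$ of properly embedded simple lines (properness because each component of $\Lambda$ represents a nontrivial element of $F$), and the closure of each such line in $\bar\Sigma$ is a simple arc whose two endpoints on $\partial F$ are exactly the pair $\{l^-,l^+\}$ attached to the corresponding $l\in\lines_{\multimot}$.

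Now collapse each closed arc of $\tilde\Lambda$ in $\bar\Sigma$ to a point. The arcs form a disjoint family that is properly discontinuous in $\Sigma$ and accumulates only at $\partial F$, so the induced decomposition of $\bar\Sigma$ is upper semicontinuous; each nondegenerate element is a cellular, non-separating arc in $S^2$. Moore's decomposition theorem then gives that the quotient is homeomorphic to $S^2$. The only identifications this quotient makes on $\partial F\subset\bar\Sigma$ are exactly the pairs $\{l^-,l^+\}$, so the image of $\partial F$ is $\decomp_{\multimot}$. Hence $\decomp_{\multimot}$ embeds in $S^2$ and is planar. The main obstacle is the topological claim that $\bar\Sigma\cong S^2$; once granted, Moore's theorem and the identification of the quotient of $\partial F$ with $\decomp_{\multimot}$ are essentially bookkeeping.
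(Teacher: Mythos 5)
Your proposal follows essentially the same route as the paper's proof: pass to the universal cover of the handlebody, compactify by adjoining the Cantor set boundary to get a $2$--sphere containing the lifted multicurve as a family of disjoint arcs, and apply Moore's Decomposition Theorem to collapse the arcs, identifying the image of $\partial F$ with the decomposition space. You spell out a few more of the details (why the compactified boundary surface is $S^2$, and the upper semicontinuity check for Moore's theorem), but the argument is the same.
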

We include the proof for completeness:
\begin{proof}
Realize $\multimot$ by an embedded
multicurve on the surface of a handlebody, which lifts to a collection of disjoint arcs on the
boundary surface of the universal cover of the handlebody. 
The universal cover is a thickened tree, and may be compactified by
including the Cantor set boundary of the tree. 
The resulting space is a 3--ball with a collection of disjoint arcs
in the bounding 2--sphere. 
By Moore's Decomposition Theorem (\fullref{theorem:Moore}), collapsing each of these arcs to a point
sends the 2--sphere to the 2--sphere, and
the image of the Cantor
set in the quotient is the decomposition space, embedded, non-surjectively, into $S^2$.
\end{proof}

Inclusion of a finite index subgroup induces a homeomorphism of
decomposition spaces, so:
\begin{corollary}\label{corollary:vgimpliesplanar}
If $\multimot$ is virtually geometric then $\decomp$ is planar.
\end{corollary}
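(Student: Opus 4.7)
The plan is to reduce directly to \fullref{lemma:geometricimpliesplanar} using the fact, established in the preliminaries, that lifting a line pattern to a finite index subgroup yields a homeomorphic decomposition space. Planarity is a purely topological property, so it transfers across any homeomorphism.

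More concretely, suppose $\multimot$ is virtually geometric. By definition there is a finite index subgroup $G \leq F$ such that the lifted multiword $\multimot'$ in $G$ (as constructed in the subsection ``A Line Pattern Lifted to a Finite Index Subgroup'') is geometric. Applying \fullref{lemma:geometricimpliesplanar} to $(G,\multimot')$ shows that $\decomp_{\multimot'}$ embeds in $S^2$. Now the coarse inverse $\inv{\iota}\from F \to G$ of the inclusion $\iota\from G\hookrightarrow F$ is a quasi-isometry that takes each line of $\lines_{\multimot}$ to a set coarsely equivalent to a line of $\lines_{\multimot'}$, and so extends to a homeomorphism $\bdry F \to \bdry G$ preserving pairs of endpoints of lines in the pattern. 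This descends to a homeomorphism $\decomp_{\multimot} \to \decomp_{\multimot'}$. Composing with the planar embedding of $\decomp_{\multimot'}$ gives a planar embedding of $\decomp_{\multimot}$.

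There is no real obstacle here; the work was done in \fullref{lemma:geometricimpliesplanar} and in the preliminaries, which together supply both the planarity in the geometric case and the homeomorphism invariance under finite index lifting. The proof is a one-line composition.
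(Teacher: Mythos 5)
Your proof is correct and matches the paper's argument exactly: the paper deduces the corollary from \fullref{lemma:geometricimpliesplanar} together with the fact, from the preliminaries, that lifting the line pattern to a finite index subgroup induces a homeomorphism of decomposition spaces. Nothing further is needed.
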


\begin{theorem}[{cf \cite[Theorem 1]{Ota92}}]\label{theorem:rigidvg}
Let $(\F,\underline{w})$ be rigid. The following are equivalent:
\begin{enumerate}
\item The multiword $\underline{w}$ is geometric.\label{item:rigidgeometric}
\item The decomposition space $\decomp_{\underline{w}}$ is planar.\label{item:rigidplanar}
\item Every minimal Whitehead graph for $\underline{w}$ has an
  embedding in the plane with
  consistent cyclic orderings of edges incident to inverse vertices.\label{item:planarwh}
\end{enumerate}
\end{theorem}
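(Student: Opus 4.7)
The plan is to establish the cyclic chain (\ref{item:rigidgeometric}) $\Rightarrow$ (\ref{item:rigidplanar}) $\Rightarrow$ (\ref{item:planarwh}) $\Rightarrow$ (\ref{item:rigidgeometric}). The first implication is immediate from \fullref{lemma:geometricimpliesplanar}, and the last follows from \fullref{proposition:gooddiagramimpliesgeometric} applied to any minimal Whitehead graph satisfying (\ref{item:planarwh}). The substantive content is the middle implication (\ref{item:rigidplanar}) $\Rightarrow$ (\ref{item:planarwh}).

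To extract a planar embedding of $\Wh_\mathcal{B}(*)$ from a given embedding $\iota\from \decomp_{\underline{w}} \hookrightarrow S^2$, I fix a vertex $v \in \tree$ and note that the $2n$ vertices of $\Wh(v)$ correspond to the edges at $v$; each such edge $e$ determines a closed connected subset $\hat{V}_e \subset \decomp_{\underline{w}}$ by \fullref{lemma:basicconnectivity}. The edges of $\Wh(v)$ at the vertex for $e$ correspond to the lines of $\lines$ crossing $e$, and each such line $l$ contributes a single ``contact point'' of $\hat{V}_e$ with a neighboring sector, namely the image $\qmap(l^\pm)$. The embedding $\iota$ equips these contact points with a cyclic order inherited from the plane, giving a candidate cyclic structure at the vertex of $\Wh(v)$ for $e$. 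Assembling these cyclic orders over all vertices of $\Wh(v)$ yields a planar embedding of the abstract graph $\Wh_\mathcal{B}(*)$.

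The main obstacle is showing that the cyclic orderings at inverse vertices $b$ and $\inv{b}$ of $\Wh_\mathcal{B}(*)$ are consistent. My plan is to prove a continuum analogue of Whitney's theorem: a connected, locally connected planar compactum with no cut points and no cut pairs admits an embedding in $S^2$ that is unique up to a self-homeomorphism of $S^2$. Rigidity of $(F,\underline{w})$ guarantees that $\decomp_{\underline{w}}$ satisfies these hypotheses. I would prove uniqueness by matching up the complementary components of $\iota(\decomp_{\underline{w}})$ in $S^2$ for two given embeddings, with the absence of cut points and cut pairs ruling out the combinatorial ambiguities that would otherwise allow distinct embeddings. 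Once uniqueness is in hand, consistency at inverse vertices follows from $F$-equivariance: left multiplication by $b$ takes the $b$-sector at $v$ to the $\inv{b}$-sector at $vb$, and by uniqueness this extends to a self-homeomorphism of $S^2$ that is globally orientation-preserving or globally orientation-reversing, so the two cyclic orders match up consistently. The passage to \emph{every} minimal Whitehead graph (rather than just $\Wh_\mathcal{B}(*)$ for the initial basis) is then handled by observing that complexity-reducing Whitehead automorphisms are themselves realized by handlebody homeomorphisms and so preserve both planarity and the consistent-inverse-vertex-ordering property.
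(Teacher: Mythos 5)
Your outer structure is the same as the paper's: (\ref{item:rigidgeometric})$\Rightarrow$(\ref{item:rigidplanar}) via \fullref{lemma:geometricimpliesplanar} and (\ref{item:planarwh})$\Rightarrow$(\ref{item:rigidgeometric}) via \fullref{proposition:gooddiagramimpliesgeometric} are exactly the paper's steps. The difference is the middle implication: the paper does not reprove it, but quotes \cite[Lemma 4.4]{Ota92} and observes that Otal's two hypotheses (indecomposability of each word, and an embedding in which closures of complementary regions meet pairwise in at most one point) are only needed to exclude cut points and cut pairs, so rigidity suffices. You instead try to prove (\ref{item:rigidplanar})$\Rightarrow$(\ref{item:planarwh}) from scratch, and as written this is where there are genuine gaps. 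First, the assertion that the embedding equips the contact points of a sector $\hat{V}_e$ with a well-defined cyclic order is precisely the content that has to be proved, and precisely where rigidity must enter: finitely many marked points on a planar continuum carry no canonical cyclic order in general, and the needed order (and its independence of choices) is exactly what Otal's ``closures meet in at most one point'' condition, i.e.\ the absence of cut pairs, guarantees. Your sketch never uses rigidity at this step. Second, even granting the cyclic orders and your equivariance argument, there is a missing comparison: the element $b$ carries the sector $\hat{V}_{\inv{b}}$ at $v$ not to $\hat{V}_b$ but to the \emph{complementary} side of the tree edge $[v,vb]$, so uniqueness of the embedding only relates the order at the $\inv{b}$--vertex to the order of the same contact set seen from the far side of the edge; consistency at inverse vertices additionally requires that the orders induced from the two sides of an edge are mutually reverse, which is again a planarity argument in which no-cut-pairs is the key ingredient and which your plan does not address.

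Beyond that, the ``continuum analogue of Whitney's theorem'' you propose (unique embedding in $S^2$ of a locally connected planar compactum with no cut points and no cut pairs) is a substantial theorem asserted with a one-line plan; note that such spaces still have local cut points (the points $\qmap(l^\pm)$), so Sierpinski-carpet-style uniqueness arguments do not apply verbatim, and ``matching up complementary components'' needs accessibility and Jordan-domain facts whose verification is at least as much work as the lemma you are replacing. Finally, a smaller point: to pass from one good Whitehead graph to \emph{every minimal} one you need Zieschang's geometric version of Whitehead's algorithm (peak reduction realized by handlebody homeomorphisms), which the paper cites; the statement that complexity-reducing automorphisms are geometric is not by itself enough, since two minimal-complexity bases are connected by complexity-preserving moves. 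So either supply proofs of the two continuum-topology claims, or do as the paper does and cite \cite[Lemma 4.4]{Ota92} together with the observation that rigidity replaces its hypotheses.
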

\begin{proof}
 \fullref{lemma:geometricimpliesplanar} shows \mbox{(\ref{item:rigidgeometric}) implies (\ref{item:rigidplanar})}.

\mbox{(\ref{item:rigidplanar}) implies (\ref{item:planarwh})} is the
content of \cite[Lemma 4.4]{Ota92}.
The hypotheses for this lemma are that every element of the multiword
is `indecomposable' and that the decomposition space embeds
into $S^2$ in such a way that closures of the complementary regions intersect
pairwise in at most one point.
The first hypothesis is unnecessarily strong. 
Indecomposability of each element of the multiword
is only used to prove that the decomposition space has no cut points.
The second hypothesis is satisfied if the decomposition space has no
cut pairs. Therefore, rigidity is a sufficient hypothesis.

\mbox{(\ref{item:planarwh}) implies
  (\ref{item:rigidgeometric})} by \fullref{proposition:gooddiagramimpliesgeometric}.
\end{proof}

\begin{corollary}\label{corollary:rigidnotgimpliesnotvg}
Virtual geometricity implies geometricity for rigid multiwords.
\end{corollary}

\subsection{Non-rigid Multiwords and Virtual Geometricity}
In this section we prove the \vgthm (\fullref{theorem:planarequalsvg}).
We first (\fullref{theorem:vgeq}) prove the theorem in the special case that the decomposition
space has no uncrossed cut pairs.
Given \fullref{theorem:vgeq}, the proof of
\fullref{theorem:planarequalsvg} amounts to showing that if the
decomposition space is planar then uncrossed cut pairs can be pinched
to cut points without making the space non-planar.

\begin{theorem}\label{theorem:vgeq}
Assume that $\decompw$ is connected with no uncrossed cut pairs. 
Let $\Gamma$ be the JSJ decomposition of $\F$ relative to $\multimot$.
The following are equivalent:
\begin{enumerate}
\item The multiword $\multimot$ is virtually geometric.\label{item:vg}
\item The decomposition space $\decompw$ is planar.\label{item:planar}
\item $\indg$ is geometric for every non-cyclic vertex group $G$ of $\Gamma$.\label{item:piecesgeometric}
\end{enumerate}
\end{theorem}
\begin{proof}
\fullref{corollary:vgimpliesplanar} shows \mbox{(\ref{item:vg})
  implies (\ref{item:planar})}.

Let $G$ be a non-cyclic vertex group of $\Gamma$.
Since there are no uncrossed cut pairs, $\multimot=\augw$, so
\fullref{lemma:embed} shows that the decomposition space
$\decomp_{\indg}$ embeds into $\decompw$.
Thus,  if
$\decomp_{\indg}$ is non-planar then $\decompw$ is non-planar
as well. 
Therefore, (\ref{item:planar}) implies that $\decomp_{\indg}$ is
planar.
If $(G,\indg)$ is a QH--surface then $\indg$ is geometric.
If $(G,\indg)$ is rigid and $\decomp_{\indg}$ is planar 
\fullref{theorem:rigidvg} says $\indg$ is geometric.
Thus, \mbox{(\ref{item:planar}) implies (\ref{item:piecesgeometric})}.

Now assume (\ref{item:piecesgeometric}).
From a graph of groups we may build a corresponding graph of spaces \cite{ScoWal79}:
For each vertex group choose a vertex space with fundamental group isomorphic
to the vertex group.
For each edge group choose a space with fundamental group isomorphic
to the edge group, and let the edge space be the product of that space
with the unit interval. 
Use the edge injections of the graph of groups to define attaching
maps of edge spaces to the corresponding vertex spaces. 
The resulting space will have fundamental group isomorphic to the
fundamental group of the graph of groups.

For each non-cyclic vertex group, the induced multiword is geometric,
so we can choose the vertex space to be a handlebody with an embedded
multicurve in the boundary representing the induced multiword.

For the edge spaces we use annuli. 
Later we will want to
thicken them to make the resulting graph of spaces a 3--manifold.

For the moment we will also make a geometricity assumption on the
cyclic vertex groups. Suppose one of the following possibilities are
true for each cyclic vertex group $\left<g\right>$:
\begin{itemize}
\item There are some number $k$ of incident edges and
each edge injection is degree one. 
In this case we choose the vertex space to be a solid torus with $k+1$
disjoint curves on the boundary, one representing the element $g$ and
$k$ to be attaching curves
to which we will glue  boundary curves of annulus edge spaces. 
\item There are some number $k$ of incident edges and the degrees of the edge injections are all
two except for possibly one of degree one. In this case we choose the
vertex space to be a solid Klein bottle, and again we have $k+1$ disjoint
curves on the boundary representing $g$ and the attaching curves.
\end{itemize}

The resulting graph of spaces has fundamental group $\F$ and has an embedded
multicurve representing $\multimot$ such that the multicurve is
disjoint from the edge spaces.
It  is not yet a 3--manifold with boundary; we need to
fatten the annuli. 
To see if this is possible, consider for each boundary component of
each annulus a small tubular neighborhood of the attaching curve in the
boundary of the corresponding handlebody.
If for each annulus the two neighborhoods are either both annuli or both
M\"{o}bius strips then the annuli may be fattened to make the graph of
spaces a 3--manifold.
Now, a fattened annulus is composed of a 1--handle and a 2--handle, so this
does not explicitly give the resulting space a handlebody structure.
However, a graph of aspherical spaces is aspherical \cite{ScoWal79},
and a compact aspherical 3--manifold with free fundamental group is a
handlebody \cite{Hem76}, so this space really is a handlebody, and  $\multimot$ is geometric. 

Thus, assuming (\ref{item:piecesgeometric}), there are two possible obstructions to geometricity:
\begin{enumerate}[(a)]
\item The degrees of the edge injections into some cyclic vertex group
  are not of one of the two forms described above.\label{ob:degree}
\item Some annulus cannot be fattened because one boundary
  neighborhood is an annulus and the other is a M\"{o}bius strip. \label{ob:mobius}
\end{enumerate}

\begin{claim}
  These obstructions vanish in a finite index subgroup of $\F$.
\end{claim}
Lift $\multimot$ to this finite index subgroup, and then apply the
graph of spaces construction to see \mbox{$(\ref{item:piecesgeometric})\implies(\ref{item:vg})$}.

\begin{claimproof}
There are finitely many elements $g_i\in\underline{w}$ such that
$\qmap(\{\inv{g}_i^\infty,g_i^\infty\})$ is a cut point in
$\decomp_{\underline{w}}$.

From the proof of \fullref{proposition:multipleotal}, an edge injection of degree greater
than one into a cyclic
vertex group $\left<g_i\right>$ occurs when the $g_i$--action permutes some
components of $\decomp_{\underline{w}}\setminus
\qmap(\{\inv{g}_i^\infty,g_i^\infty\})$.
There are only finitely many components, so there exists some minimal positive
power $a_i$ of $g_i$ such that the $g_i^{a_i}$--action fixes each
complementary component.
Additionally, if some edge incident to the $\left<g_i\right>$ vertex
attaches to a handlebody around a non-orientable handle, and if $a_i$ is odd, then consider $g_i^{2a_i}$.

 Marshall Hall's Theorem implies there exists a finite index subgroup
 $H_i$ of $\F$ in which $g_i^{a_i}$ (alternatively, $g_i^{2a_i}$) generates a free factor.
Let $H$ be the finite index subgroup $\cap_iH_i$.
If we apply the \decompthm to $H$ we get a graph of groups covering
the graph of groups decomposition for $\F$. 
By construction, the smallest power of $g_i$ in $H$ is a multiple of
$g_i^{a_i}$, so all edge inclusions are degree one.
This takes care of obstruction (\ref{ob:degree}), and we can choose
all the cyclic vertex spaces to be solid tori.

Furthermore, we can take the vertex spaces to be handlebodies finitely
covering the original handlebodies. 
If some attaching curve in the original decomposition ran along a
M\"{o}bius strip then it runs along an even covering of the M\"{o}bius
strip in the covering handlebodies.
Thus, all attaching curves have annulus neighborhoods, which
takes care of obstruction (\ref{ob:mobius}).
\end{claimproof}
\end{proof}

We would now like to show that if $\decomp_{\multimot}$ is planar then
$\decomp_{\augw}$ is planar.
To get $\decomp_{\augw}$ from $\decomp_{\multimot}$ we must pinch
uncrossed cut pairs to points.
To make sure planarity is preserved we first embed
$\decomp_{\multimot}$ in a sphere and then find an upper
semi-continuous collection of arcs so that collapsing the arcs
achieves the pinching of the uncrossed cut pairs. 
It will suffice to find such a collection of arcs for an arbitrary
vertex group of the \rjsj:
\begin{lemma}\label{planarclaim2}
If  $\decompw$ is planar
 then for each non-cyclic vertex group $G$ of the \rjsj, the
 decomposition space
 $\decomp_{\indg}$ is planar.
\end{lemma}

\begin{proof}
Let $\chi\from\decompw\into S^2$ be an embedding.
Let $\{S_i\}_{i\in I}$ be the collection of uncrossed cut pairs of
$\decomp_{\multimot}$ in $\qmap(\bdry G)$.
Let $S_i=\{x_{i,0},x_{i,1}\}$.
Since $G$ is a vertex group of the \rjsj, for each $i\in I$ all of $\qmap(\bdry G)\setminus S_i$ is
contained in a single complementary component $C_i$ of $S_i$, for
otherwise we could use
\fullref{lemma:refinement} to find a refinement of the \rjsj. 
There is at least one other complementary component $C_i'$ of $S_i$.
By \fullref{corollary:arcconnectedcomponents} and
\fullref{lemma:limitpoints}, there exists an arc
$A_i\subset\closure{C_i'}$ connecting the two points of $S_i$.
Note that $A_i\cap\qmap(\bdry G)=S_i$.
Recalling the terminology of \fullref{sec:decompositionspaces}:
\begin{claim}\label{claim:arcsusc}
  The sets $\chi(A_i)$ are the non-degenerate elements of an upper semi-continuous decomposition of $S^2$.
\end{claim}
Assuming the claim, Moore's Decomposition Theorem (\fullref{theorem:Moore}) then says that the quotient of
the sphere obtained by collapsing each of these arcs to a point is again
the sphere.
The image of $\chi(\qmap(\bdry G))$ in this quotient is
$\decomp_{\indg}$ embedded, non-surjectively, in $S^2$.
Thus, $\decomp_{\indg}$ is planar, and the lemma is proven.

\begin{claimproof}[Proof of \fullref{claim:arcsusc}]
Essentially the proof is that each $S_i$ separates $A_i\setminus S_i$
from all of the other arcs, so arcs can only be close at their
endpoints, and we know the endpoints are well behaved because the
boundary pattern of a multiword gives an upper semi-continuous
decomposition of the boundary of the free group.

Formally, fix an $i\in I$ and let $U$ be an open neighborhood of $\chi(A_i)$ in
$S^2$.
We will produce an open neighborhood $V$ of $\chi(A_i)$
such that $\chi(A_j)\cap V\neq\emptyset$ implies $\chi(A_j)\subset U$
for all $j\in I$.

By \fullref{lemma:uncrossedaxis}, for each $j$ and $k$ the set $\qmap^{-1}(x_{j,k})$ is a single point; call
it $\xi_{j,k}$.
Let $U'$ be an open neighborhood of $\chi(A_i)$ in $S^2$ such that
$\closure{U'}\subset U$.
\begin{claim}
$J=\{j\in I\mid  \chi(S_j)\subset U'\text{ and }\chi(A_j)\not\subset U\}$ is finite.
\end{claim}
\begin{claimproof}
  Suppose not. For each $j\in J$ choose a point $\chi(y_j)\in \chi(A_j)\setminus
  U$.
There is some $\sigma\from\mathbb{N}\into J$ such that
$(\chi(y_{\sigma(k)}))$ is a convergent sequence, converging to a point $\chi(y)\in \chi(\decompw)\setminus U$.
By \fullref{corollary:connected}, $y$ has a neighborhood basis in $\decompw$ of
connected neighborhoods $\nbhd(y,r)$.
Thus, given $r$ there is a $K$ so that for all $k>K$ we have
$y_{\sigma(k)}\in\nbhd(y,r)$.
But $S_{\sigma(k)}$ is a cut set in $\decompw$ separating $y_{\sigma(k)}$ from all
of the other $y_j$, so if $y_{\sigma(k)}$ and $y_j$ are both contained
in the connected set $\nbhd(y,r)$ then so is at least one of the
points of $S_{\sigma(k)}$.
Thus, $y$ is a limit point of $\cup_{j\in J}S_j$, so $\chi(y)$ is a
limit point of $\chi(\cup_{j\in J}S_j)$.
This is impossible, since $\cup_{j\in J}\chi(S_j)\subset U'\subset \closure{U'}$ and
$\{\chi(y_j)\}_{j\in J}\subset S^2\setminus U$ are contained in disjoint closed sets.
\end{claimproof}

Let $U''=U'\setminus (\cup_{j\in J}\chi(A_j))$. 
This is an open neighborhood
of $\chi(A_i)$ contained in $U$.
By \fullref{proposition:usc}, the decomposition of $\bdry\tree$ whose
non-degenerate elements are elements of the boundary pattern for
$\augw$ is upper semi-continuous.
Thus, for the neighborhood $\mathcal{U}''=\qmap^{-1}(\chi^{-1}(\chi(\decompw)\cap
U''))\subset\bdry\tree$ of $\{\xi_{i,0},\xi_{i,1}\}$ there exists an open neighborhood
$\mathcal{V}\subset\mathcal{U}''$ of $\{\xi_{i,0},\xi_{i,1}\}$ so that if for some $j$ and $k$ we have $\xi_{j,k}\in\mathcal{V}$ then $\xi_{j,1-k}\in\mathcal{U}''$.
As in
\fullref{sec:nbhdbasis}, we may assume $\mathcal{V}$ is saturated and
$\qmap(\mathcal{V})$ 
has two components, one containing $x_{i,0}$ and the other containing $x_{i,1}$.
Since $\chi$ is an embedding, there exists an open set $V'\subset S^2$
such that $V'\cap\chi(\decompw)=\chi(\qmap(\mathcal{V}))$.

  Let $W=S^2\setminus \chi(C_i)$, and let $V=U''\cap (V'\cup W)$.

Suppose, for some $j\neq i$, that $\chi(A_j)\cap V\neq\emptyset$.
Since $C_i$ is the complementary component of $S_i$ containing $A_j$, we have $\chi(A_j)\cap W=\emptyset$, so
$A_j\cap\qmap(\mathcal{V})\neq\emptyset$.
On the other hand, $S_j$ separates $S_i$ from $A_j\setminus S_j$, and
$\qmap(\mathcal{V})$ consists of connected neighborhoods of the two
points of $S_i$, so if $A_j\cap\qmap(\mathcal{V})\neq\emptyset$ then one of the 
$x_{j,k}$ must be in $\qmap(\mathcal{V})$. 
By definition of $\mathcal{V}$, this means that
$\{\xi_{j,0},\xi_{j,1}\}\subset\mathcal{U}''$.
By definition of $\mathcal{U}''$, this means that $\chi(A_j)\subset U$.
\end{claimproof}\hfill\end{proof}

\begin{theorem}[\vgthm]\label{theorem:planarequalsvg}
Assume that $\decompw$ is connected. 
Let $\Gamma$ be the JSJ decomposition of $\F$ relative to $\multimot$.
The following are equivalent:
\begin{enumerate}
\item The multiword $\multimot$ is virtually geometric.\label{item:vg2}
\item The decomposition space $\decompw$  is planar.\label{item:planar2}
\item For every non-cyclic vertex group $G$ of $\Gamma$, the
  induced multiword $\indg$ is geometric.\label{item:piecesgeometric2}
\end{enumerate}
Thus, virtually geometric multiwords are those that are built
from geometric pieces.
\end{theorem}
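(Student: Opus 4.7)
My approach is to deduce \fullref{theorem:planarequalsvg} from \fullref{theorem:vgeq} by passing from \multimot to the augmented multiword \augw. The key preliminary reductions are that the rJSJ is unchanged by augmentation (as noted in the remark following \fullref{theorem:JSJ}, since uncrossed cut pairs of $\decompw$ simply become cut points of $\decomp_{\augw}$ and no new uncrossed cut pairs are created), and that the induced multiword $\indg$ at any non-cyclic vertex group is literally the same whether defined with respect to \multimot or \augw---because $\indg$ by definition already includes generators of incident edge groups, which are precisely the new elements of \augw. Consequently condition (3) for \multimot is identical to condition (3) for \augw, and \augw satisfies the hypothesis of \fullref{theorem:vgeq}.

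Next, I would verify that condition (1) is also preserved. The implication that \augw being virtually geometric forces \multimot to be virtually geometric is immediate: forgetting the curves representing elements of $\augw\setminus\multimot$ in a handlebody realization leaves an embedded multicurve realizing \multimot. For the converse, if \multimot is virtually geometric then \fullref{corollary:vgimpliesplanar} gives planarity of $\decompw$; provided the planarity equivalence below holds, \fullref{theorem:vgeq} then forces \augw to be virtually geometric.

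The main technical task, and the principal obstacle, is to establish that $\decompw$ is planar if and only if $\decomp_{\augw}$ is planar. The quotient $\augmap\colon\decompw\to\decomp_{\augw}$ identifies, for each $w\in\augw\setminus\multimot$, the two endpoints $w^\infty,w^{-\infty}$ of an uncrossed cut pair into a single cut point; equivalently, $\decompw$ is recovered from $\decomp_{\augw}$ by splitting these cut points back into pairs. I plan to prove planarity in both directions by a local surgery argument: starting from a planar embedding of $\decompw$ in $S^2$, collapse each uncrossed cut pair along a short arc lying in the complementary region; starting from a planar embedding of $\decomp_{\augw}$, split each cut point of type $w\in\augw\setminus\multimot$ along a short arc, distributing complementary components according to which side of the axis of $w$ in $\tree$ they lie on. Using \fullref{theorem:structure}, I would choose the planar embedding compatibly with the tree-of-pieces structure (each $\decomp_{\indg}$ embedded in its own disc, glued along cut points), which forces the cyclic ordering around each relevant cut point to respect the bipartition by axis side, so that the split is realizable in the plane without introducing crossings.

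Verifying this cyclic-ordering compatibility is the crux of the argument; once it is in hand, the chain (1)$\Leftrightarrow$(1')$\Leftrightarrow$(2')$\Leftrightarrow$(3')=(3) together with (2)$\Leftrightarrow$(2') closes the loop, yielding (1)$\Leftrightarrow$(2)$\Leftrightarrow$(3). Everything else is bookkeeping: the passage to \augw is designed precisely so that the decomposition space has no uncrossed cut pairs, which is the only extra hypothesis separating \fullref{theorem:vgeq} from the desired theorem.
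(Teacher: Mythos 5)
Your strategy matches the paper's in its essential outline: reduce to the augmented multiword $\augw$, apply \fullref{theorem:vgeq}, and transfer planarity across the quotient $\augmap$ using a Moore's Decomposition Theorem argument. The observation that $\indg$ is the same whether computed from $\multimot$ or from $\augw$ (and that the rJSJ is unchanged) is correct and is used implicitly in the paper as well. Where you deviate is in targeting $\decomp_{\augw}$ for the planarity transfer; the paper's Claim inside the proof collapses directly to each $\decomp_{\indg}$ inside the image of $\bdry G$, after first reducing away the cut points. These are minor variants of the same Moore's-theorem argument, and either works.

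However, you have misidentified the hard step. You do not need the equivalence of planarity between $\decompw$ and $\decomp_{\augw}$; you only need the collapsing direction, namely that planarity of $\decompw$ implies planarity of $\decomp_{\augw}$ (or of each $\decomp_{\indg}$). Indeed, the cycle
\[(1)\;\Rightarrow\;(2)\;\Rightarrow\;(2')\;\Rightarrow\;(3')=(3)\;\Rightarrow\;(1')\;\Rightarrow\;(1)\]
closes with only this one direction: $(1)\Rightarrow(2)$ is \fullref{corollary:vgimpliesplanar}; $(2)\Rightarrow(2')$ is the collapsing direction; $(2')\Leftrightarrow(3')\Leftrightarrow(1')$ is \fullref{theorem:vgeq} applied to $\augw$; and $(1')\Rightarrow(1)$ is your ``drop curves'' observation. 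The splitting direction --- recovering a planar embedding of $\decompw$ from one of $\decomp_{\augw}$ by pulling cut points apart --- is not needed, yet you label the cyclic-ordering compatibility required for it as ``the crux of the argument.'' That is a misallocation of effort, and the paper's proof avoids it entirely. Moreover, your sketch of that unnecessary direction is imprecise: you speak of ``distributing complementary components according to which side of the axis of $w$ in $\tree$ they lie on,'' but the complementary components of a cut pair each limit to both points of the pair, so they are not on one side or the other. If you were to pursue that direction you would need a different, more careful local analysis near the cut point. Fortunately you do not have to: concentrate your technical effort on the collapsing direction (this is where the paper carries out the upper semi-continuity estimate with the constant $R$), verify the usc conditions of Moore's theorem for the collection of arcs you choose in the complementary regions, and the theorem follows.
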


\begin{proof}

\fullref{corollary:vgimpliesplanar} shows \mbox{(\ref{item:vg})
  implies (\ref{item:planar})}.

If the decomposition space is planar then 
\fullref{planarclaim2} shows that the induced decomposition spaces of
each of the vertex groups in the \rjsj are planar.
\fullref{theorem:rigidvg} and the fact that
QH--surface multiwords are always geometric show that all of the
induced multiwords are geometric. Thus, \mbox{(\ref{item:planar})
  implies (\ref{item:piecesgeometric2})}.

By \fullref{theorem:vgeq}, if the induced multiword in each non-cyclic vertex group of
the \rjsj is geometric then $\augw$ is virtually
geometric.
This implies $\multimot$ is virtually geometric, since
$\multimot$ is a subset of $\augw$, so  \mbox{(\ref{item:piecesgeometric2})
  implies (\ref{item:vg})}.
\end{proof}


\subsection{Examples}

\subsubsection{Baumslag's Word}
$w=\inv{a}^2\inv{b}\inv{a}ba\inv{b}ab$ in $\F_2=\left<a,b\right>$ is
known as \emph{Baumslag's word}.
In response to a question of Gordon and Wilton, Manning showed, by
brute force,  that
this word becomes geometric in an orientable handlebody with
fundamental group an index four subgroup of $\F_2$.

The \rjsj for $\F=\left<a,b\right>\cong\left<a,b,c\mid c=\inv{b}ab\right>$ 
is shown in \fullref{fig:longword}. 

\begin{figure}[h]
\labellist
\small
\pinlabel $\left<a,c\right>$  at -4 23
\pinlabel $\left<a\right>$ at 84 23
\pinlabel $a$ [r] at 2 11
\pinlabel $c$ [r] at 2 36
\pinlabel $a$ [l] at 80 11
\pinlabel $a$ [l] at 80 36
\endlabellist  
\centering
  \includegraphics[scale=.8]{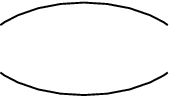}
  \caption{\rjsj-Decomposition of $\left<a,b\right>$ for $\inv{a}^2\inv{b}\inv{a}ba\inv{b}ab$ ($c=\inv{b}ab$)}
  \label{fig:longword}
\end{figure}

The word $w$ becomes $\inv{a}^2\inv{c}ac$ when rewritten in the rank two vertex
group, so the induced multiword is $\{\inv{a}^2\inv{c}ac,a,c\}$. 
One can check that this multiword is rigid, so this is the \rjsj. 
(Checking rigidity takes some work, using techniques of \cite{CasMac11}.)

In \fullref{fig:heegaard} we have a reduced Whitehead graph/Heegaard
diagram for the induced multiword that shows it is geometric.
\fullref{fig:nonorientablehandlebody} shows a (non-orientable) handlebody with embedded
multicurve representing $\{\inv{a}^2\inv{c}ac,a,c\}$.

\begin{figure}[h]
\begin{minipage}[b]{0.465\textwidth}
\labellist
\small
\pinlabel $a$ [c] at 30 102
\pinlabel $\inv{a}$ [c] at 30 28
\pinlabel $\inv{c}$ [c] at 147 102
\pinlabel $c$ [c] at 147 28
\tiny
\pinlabel $2$ at 34 111.5
\pinlabel $4$ at 40 106.5
\pinlabel $1$ at 40 97.5
\pinlabel $6$ at 34 91.5
\pinlabel $2$ at 34 39
\pinlabel $4$ at 40 33
\pinlabel $1$ at 40 24
\pinlabel $6$ at 34 18
\pinlabel $5$ at 140 110
\pinlabel $3$ at 136 101.5
\pinlabel $7$ at 140 94
\pinlabel $7$ at 140 37
\pinlabel $3$ at 136 28.5
\pinlabel $5$ at 140 20
\endlabellist
  \centering
  \includegraphics[height=1.4in]{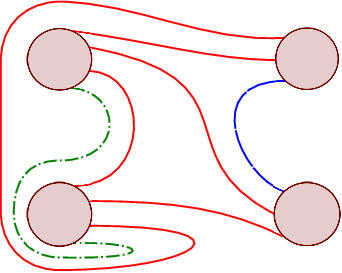}
  \caption{Whitehead graph/ Heegaard diagram}
  \label{fig:heegaard}
\end{minipage}
\hfill
\begin{minipage}[b]{0.525\textwidth}
\labellist
\small
\pinlabel $a$ [b] at 14 122
\pinlabel $\inv{a}$ [t] at 14 1
\pinlabel $c$ [l] at 179 60
\endlabellist
  \centering
\includegraphics[height=1.4in]{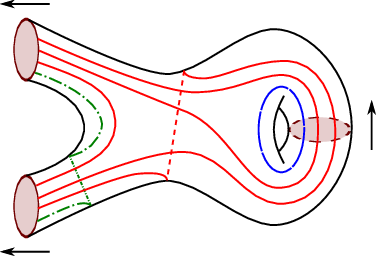}
  \caption{Corresponding non-orientable handlebody for $\{\inv{a}^2\inv{c}ac,a,c\}$}
  \label{fig:nonorientablehandlebody}
\end{minipage}
\end{figure}

The obstruction to geometricity of $w$ is that the curve representing
$c$ runs around a orientable handle, while the curve representing
$a$ does not.
We cannot achieve the conjugation of $a$ to $c$ by a fattened annulus. 

To correct this problem, pass to the index two subgroup
$G=\left<A=a^2, b, B=ab\inv{a}\right>$.

After applying the automorphism that sends $B$ to $bB$ and fixes
$b$ and $A$,
the image of $w^2$ is $\inv{A}(\inv{b}\inv{A}b)BAB\inv{A}\inv{B}^2(\inv{b}Ab)$.

The splitting over $\left<A\right>$ is an HNN extension with 
 $b$ conjugating $A$ to $C$, and
the induced multiword in the vertex group $\left<A,B,C\right>$ is
$\{A,C, \inv{A}\inv{C}BAB\inv{A}\inv{B}^2C\}$, which is geometric in a non-orientable handlebody,
as seen in \fullref{fig:nonorientableindex2baumslag}.

\begin{figure}[h]
\labellist
\small
\pinlabel $A$ [b] at 78 190
\pinlabel $C$ [t] at 78 3
\pinlabel $\inv{B}$ [b] at 241 170
\pinlabel $B$ [t] at 241 20
\endlabellist
  \centering
 \includegraphics[scale=.6]{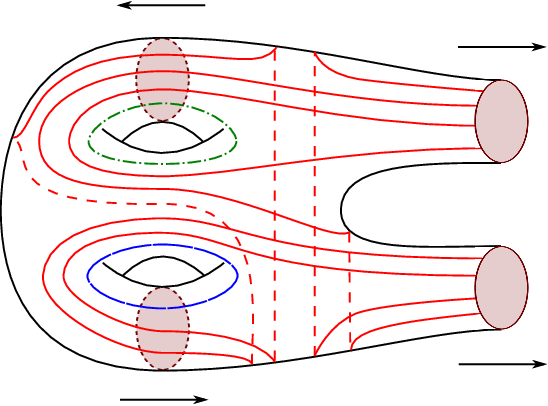} 
  \caption{A non-orientable handlebody for $\{A,C, \inv{A}\inv{C}BAB\inv{A}\inv{B}^2C\}$}
  \label{fig:nonorientableindex2baumslag}
\end{figure}

This time we can build a 3--manifold
graph of spaces because we only need to conjugate words that run
around orientable handles.
Gluing on a fattened annulus conjugating $A$ to $C$ gives a
non-orientable handlebody with fundamental group isomorphic to $G$ for
which the image of
$w^2$ is geometric.

\subsubsection{Baumslag-Solitar Words}\label{sec:bswords}
Another interesting family of examples is given by the
Baumslag-Solitar words $w_{p,q}=\inv{a}^q \inv{b}a^pb$ in
$\F_2=\left<a,b\right>$.
We will assume that $0< p\leq q$.
Gordon and Wilton \cite{GorWil10} have shown that $w_{p,q}$ is
virtually geometric when $p$ and $q$ are relatively prime.

The decomposition space associated to $w_{p,q}$ is connected without cut points.
The pair $\qmap(\{\inv{a}^{\infty},a^\infty\})$ is a cut pair.
$\Wh([\inv{a}^\infty,a^\infty])$ has components
$\wc^+_j=\{\gr(a^ib)\mid i\equiv j \mod p\}$ for $0\leq j< p$ that are cyclically
permuted by the $a$--action, and components $\wc^-_j=\{\gr(a^i\inv{b})\mid i\equiv j \mod q\}$ for $0\leq j< q$ that are cyclically
permuted by the $a$--action.

The  case $p=q=1$ is special; in this case the Whitehead graph is
a circle, which implies the decomposition space is a circle and the
word is geometric.

Otherwise, the number of complementary components is $p+q>2$, so that
$\qmap(\{\inv{a}^\infty, a^\infty\})$ is an uncrossed cut pair.
The \rjsj for this case is shown in \fullref{fig:baBA2}.

\begin{figure}[h]
\labellist
\small
\pinlabel $\left<\inv{b}a^pb,a^q\right>$ [c] at -4 23
\pinlabel $\left<a\right>$ [c] at 84 23
\pinlabel $a^q$ [r] at 2 11
\pinlabel $\inv{b}a^pb$ [r] at 2 36
\pinlabel $a^q$ [l] at 80 11
\pinlabel $a^p$ [l] at 80 37
\endlabellist  
\centering
  \includegraphics[scale=.8]{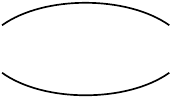}
  \caption{\rjsj-Decomposition of $\left<a,b\right>$ for $\inv{a}^q\inv{b}a^pb$}
  \label{fig:baBA2}
\end{figure}

The rank two vertex group is $\left<A=a^q, C=\inv{b}a^pb\right>$, and
the induced multiword in this vertex group is $\{A,C,\inv{A}C\}$. 
The Whitehead graph for this multiword is a circle, which implies the vertex
decomposition space is a circle and the induced multiword is geometric.
Thus, \fullref{theorem:planarequalsvg} says $w_{p,q}$ is at least virtually geometric.

The cyclic vertex group has edge inclusions of degrees $p$ and $q$. 

If $p=1$ and $q=2$ we can make this geometric by using a solid Klein
bottle for the cyclic vertex space.
(We saw the non-cyclic vertex space for this example back in \fullref{ex:bs21}.)

If $p=q$ the word is also geometric, because two disjoint degree $p$
curves fit into the boundary of a solid torus.

In all other cases, the word $w_{p,q}$ is not geometric.
Virtual geometricity can be verified by passing to the index $m=\mathrm{lcm}(p,q)$ subgroup:
\[G=\left<A, B_0, B_1,\dots, B_{m-1}\mid A=a^{m},
  B_i=a^ib\inv{a}^i\right>\]


\bibliographystyle{hypershort}
\bibliography{split}
\end{document}